\newtheorem{thm}{Theorem}
\newtheorem{prop}[thm]{Proposition}
\newtheorem{theorem}[thm]{Theorem}
\theoremstyle{definition}
\newtheorem*{definition*}{Definition}
\newtheorem{remark}[thm]{Remark}
\newtheorem{example}[thm]{Example}
\newtheorem*{thma}{Theorem~A}
\newtheorem*{thmd}{Theorem~D}
\newtheorem*{corb}{Corollary\,B}
\newtheorem*{corc}{Corollary\,C}
\newcommand{\CPb}{\overline{\mathbb{CP}}{}^{2}}
\newcommand{\CP}{{\mathbb{CP}}{}^{2}}
\newcommand{\CPo}{{\mathbb{CP}}{}^{1}}
\newcommand{\RP}{{\mathbb{RP}}{}^{2}}
\newcommand{\C}{\mathbb{C}}
\newcommand{\Z}{\mathbb{Z}}
\newcommand{\K}{{\rm K3}}
\newcommand{\twprod}{\mathbin{\mathchoice%
    {\ooalign{\raise1.15ex\hbox{$\scriptstyle\sim$}\cr\hidewidth$\times$\hidewidth\cr}}%
    {\ooalign{\raise1.15ex\hbox{$\scriptstyle\sim$}\cr\hidewidth$\times$\hidewidth\cr}}%
    {\ooalign{\raise.85ex\hbox{$\scriptscriptstyle\sim$}\cr\hidewidth$\scriptstyle\times$\hidewidth\cr}}%
    {\ooalign{\raise.65ex\hbox{$\scriptscriptstyle\sim$}\cr\hidewidth$\scriptscriptstyle\times$\hidewidth\cr}}%
    }}
\newcommand{\QED}{\hfill \ensuremath{\Box}}
\newcommand{\M}{\operatorname{Mod}}
\newcommand{\DT}[1]{t_{\scaleto{\mathstrut #1}{9pt}}}
\newcommand{\LDT}[1]{t_{\scaleto{\mathstrut #1}{9pt}}^{-1}}
\def \x {\times}
\def \eu{{\text{e}}}
\begin{document}

\title[Lefschetz fibrations with arbitrary signature] 
{Lefschetz fibrations with arbitrary signature}

\author[R. \.{I}. Baykur]{R. \.{I}nan\c{c} Baykur}
\address{Department of Mathematics and Statistics, University of Massachusetts, Amherst, MA 01003, USA}
\email{baykur@math.umass.edu}

\author[N. Hamada]{Noriyuki Hamada}
\address{Department of Mathematics and Statistics, University of Massachusetts, Amherst, MA 01003, USA}
\email{hamada@math.umass.edu}

\begin{abstract} 
We develop techniques to construct explicit symplectic Lefschetz fibrations over the $2$--sphere with any prescribed signature $\sigma$ and any spin type when $\sigma$ is divisible by $16$. \linebreak This solves a long-standing conjecture on the existence of such fibrations with positive signature. As applications, we produce symplectic $4$--manifolds that are homeomorphic but not diffeomorphic to connected sums of $S^2 \times S^2$, with the smallest topology known to date, as well as larger examples as symplectic Lefschetz fibrations. 
\end{abstract}

\maketitle

\setcounter{secnumdepth}{2}
\setcounter{section}{0}




\vspace{0.15in}
\section{Introduction} 

An immense literature has been dedicated to the study of symplectic Lefschetz fibrations since the works of Donaldson and Gompf \cite{Donaldson, GompfStipsicz} have established Lefschetz fibrations over the \mbox{$2$--sphere} as topological counter-parts of closed symplectic $4$--manifolds, after blow-ups. However, \linebreak the possible values of one of the most fundamental invariants of \mbox{$4$--manifolds,} the \emph{signature} of a Lefschetz fibration over the $2$--sphere, has not been quite understood, despite many effective ways of calculating it being available, due to the works of Endo, Ozbagci and others \cite{Matsumoto, Endo, EndoNagami, Ozbagci, Smith, CengelKarakurt}. The aim of our article is to resolve this problem:

\smallskip
\begin{thma} \label{thma}
There exist infinitely many relatively minimal symplectic Lefschetz fibrations over the $2$--sphere, whose total spaces have any prescribed signature $\sigma \in \Z$ and any spin type when  $\sigma$ is divisible by $16$. All  the examples can be chosen to be simply-connected, minimal, and with fiber genus as small as $9$, as well as arbitrarily high. 
\end{thma}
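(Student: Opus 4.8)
The plan is to reduce everything to producing two Lefschetz fibrations of fiber genus $9$ over $S^2$, one with positive and one with negative signature, and then to build all of the asserted examples as (twisted) fiber sums of copies of these two. The first and by far hardest step is to exhibit an explicit positive factorization of the identity in $\M(\Sigma_9)$, with no vanishing cycle bounding a disk so that the fibration is relatively minimal, whose total space $X_+$ has $\sigma(X_+) = \sigma_+ > 0$. I would obtain such a word by \emph{breeding}: start from standard positive relators---chain, star, lantern, and Matsumoto-type relations, together with the monodromy factorizations of a few small Lefschetz fibrations of known signature---embed several of these sub-configurations in a common genus $9$ surface, conjugate the pieces so that they share vanishing cycles, and verify that the resulting positive word still multiplies to the identity. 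The signature is then computed additively, by summing Meyer's signature cocycle over the factorization (equivalently, via Endo's formula and the Endo--Nagami/Ozbagci algorithm), as a sum of contributions from the building blocks; the crucial numerical input is that a lantern substitution carried out in the signature-increasing direction---a rational blowdown of a $(-4)$-sphere---raises $\sigma$ by $1$ while lowering $\chi$ by $1$, so that a relator bred to contain many disjoint lantern configurations, with running signature growing with positive slope in the number of bred pieces, eventually achieves $\sigma_+ > 0$. Keeping the word a genuine positive factorization on exactly genus $9$, keeping it relatively minimal, tracking the signature through all substitutions, and---for the simply-connected conclusion---arranging the vanishing cycles to normally generate $\pi_1(\Sigma_9)$ simultaneously is the heart of the argument.

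\textbf{The complementary block.} Genus $9$ Lefschetz fibrations of negative signature are abundant: for instance, the hyperelliptic fibration from $(t_{c_1}\cdots t_{c_{19}})^{20}=1$ has signature $-2(g+1)^2 = -200$ by Endo's formula, and fiber sums of Matsumoto-type fibrations supply many further distinct negative values. I would pick such an $f_-\colon X_- \to S^2$ with $\sigma(X_-) = -\sigma_- < 0$, simply-connected, with $\gcd(\sigma_+,\sigma_-)=1$; this is free given the supply of values, and is unnecessary altogether should the previous step already yield $\sigma_+ = 1$.

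\textbf{Assembly.} For positive integers $a,b$, let $Z_{a,b}$ denote the twisted fiber sum of $a$ copies of $X_+$ and $b$ copies of $X_-$ along regular fibers. Then $Z_{a,b}$ is a relatively minimal genus $9$ Lefschetz fibration over $S^2$; its total space is simply-connected (van Kampen, as the meridian of a regular fiber dies in a singular fiber), minimal (Usher's theorem on minimality of symplectic fiber sums, using $a+b\ge 2$ and fiber genus at least $2$), and symplectic (Gompf). Novikov additivity across the fiber neighborhoods, which have zero signature, gives $\sigma(Z_{a,b}) = a\sigma_+ - b\sigma_-$. Since $\sigma_+ > 0 > -\sigma_-$ and $\gcd(\sigma_+,\sigma_-)=1$, the set $\{\,a\sigma_+ - b\sigma_- : a,b \ge 1\,\}$ is all of $\Z$, so every signature is realized; for a fixed value the equation $a\sigma_+ - b\sigma_- = \sigma$ has infinitely many solutions in positive integers, whose total spaces have strictly increasing Euler characteristic (the fiber genus exceeds $1$, so $\chi$ grows with $a+b$), hence are pairwise non-homeomorphic and so non-diffeomorphic. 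Carrying out the same breeding scheme in $\M(\Sigma_g)$ for any $g \ge 9$, where there is only more room, produces for every such $g$ infinitely many examples of each signature---settling the theorem apart from the spin refinement.

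\textbf{Spin type.} By Rokhlin's theorem a closed spin $4$--manifold has signature divisible by $16$, so the spin condition is vacuous unless $16 \mid \sigma$. In that case, to get spin examples I would arrange both building blocks $X_\pm$ to be spin---a combinatorial condition on the vanishing cycles, namely the existence of a $\Z/2$ quadratic refinement on the fiber compatible with the monodromy, which can be built into the breeding of the first step and the choice in the second. Being spin forces $16 \mid \sigma_\pm$, and writing $\sigma_\pm = 16\,\sigma'_\pm$ I may again demand $\gcd(\sigma'_+,\sigma'_-)=1$; the fiber-sum gluing can be composed with a suitable fiber-preserving self-diffeomorphism of $\Sigma_9 \times S^1$ so that the two spin structures agree, making each $Z_{a,b}$ spin. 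These realize every multiple of $16$, infinitely often, in every genus $g \ge 9$. For the non-spin examples when $16 \mid \sigma$ one runs the same construction with at least one non-spin building block, which is the generic case.
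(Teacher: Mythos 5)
Your high-level strategy (breeding, lantern substitutions to push the signature up, fiber sums for assembly, Usher for minimality, quadratic forms for the spin condition) is the right one and matches the paper's in outline, but the proposal leaves the actual crux unproved and contains one incorrect criterion. The central gap is the first step: you assert that a relator ``bred to contain many disjoint lantern configurations, with running signature growing with positive slope,'' eventually achieves $\sigma_+>0$, but this slope claim is unsubstantiated and is exactly where the difficulty lives. Every lantern configuration you want to substitute must first be manufactured inside a positive factorization, and all standard building blocks cost far more signature than they repay in available lanterns (e.g.\ the genus--$9$ hyperelliptic word on $\CP \# 41\CPb$ contributes $-40$ per copy). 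What is actually needed is a genus--$9$ positive factorization with signature \emph{zero} containing a usable lantern configuration, after which a single substitution gives $+1$; producing that signature-zero word is the paper's main achievement and occupies all of its Section~3, via a three-step breeding of specially designed genus--$2$ and genus--$3$ pencils in which the $2$--chain and lantern contributions exactly cancel ($1\cdot(-7)+7\cdot(+1)=0$). Declaring that such a word can be found ``by breeding'' is a statement of intent, not a proof. (Your assembly step, by contrast, is fine: with $\gcd(\sigma_+,\sigma_-)=1$ the set $\{a\sigma_+-b\sigma_-: a,b\ge 1\}$ is all of $\Z$ with infinitely many representations, and Novikov additivity, Usher, and van Kampen do what you say.)

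Two further points would need repair. First, your spin criterion is insufficient: the existence of a quadratic form $q$ on $H_1(\Sigma_9;\Z_2)$ with $q(c_i)=1$ for all vanishing cycles does \emph{not} imply the total space is spin --- the hyperelliptic fibrations on $\CP\#(4g+5)\CPb$ for $g$ odd satisfy it and are not spin. By Stipsicz's theorem one also needs an algebraic dual of the fiber class with \emph{even} self-intersection, which forces you to control the divisibility of the fiber class and, absent a known section, requires an additional argument (the paper proves a divisibility criterion from lifts to $\M(\Sigma_{9,1})$ and a Rokhlin-type trick to rule out odd self-intersection); you would also need to match Arf invariants of the fiberwise spin structures to realize your spin fiber sums. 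Moreover, a spin positive block needs $\sigma_+\ge 16$, which is strictly harder than $\sigma_+>0$ and again requires a specific mechanism (the paper breeds in an inverse $5$--chain, gaining $+16$). Second, ``carrying out the same breeding in $\M(\Sigma_g)$ for any $g\ge 9$, where there is only more room'' is not an argument: the construction is rigidly tied to the genus, and the paper instead obtains higher-genus examples as finite covers of the genus--$9$ one (yielding $g=8d+1$), which is what makes ``arbitrarily high genus'' available.
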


\noindent Fiber summing with trivial fibrations over orientable surfaces, possibly with boundary, the main statements of the theorem  carry over to Lefschetz fibrations over \emph{any} compact orientable surface. 

It was often conjectured that Lefschetz fibrations over the $2$--sphere with positive signatures did not exist,  which constituted a long-standing open problem; see e.g. \cite{OzbagciThesis, Ozbagci}, \cite{StipsiczTalk}, \cite[Problems 6.3, 6.4]{StipsiczSurvey}, \cite[Problems 7.4, 7.5]{KorkmazStipsicz}. Our examples settle this conjecture in the negative. In contrast, in the literature there are plenty of examples of Lefschetz fibrations over the $2$--sphere with negative signatures, many coming from algebraic geometry. To the best of our knowledge, even in this case it was not known that every negative signature could be realized; see Remark~\ref{SignatureHistory}.

We will prove the theorem by explicitly describing the Lefschetz fibrations in terms of their monodromy factorizations, which correspond to positive Dehn twist factorizations in the mapping class group of an orientable surface.  Our constructions will heavily use variations of the \emph{breeding technique} \cite{BaykurGenus3, Hamada} for building Lefschetz pencils and fibrations out of lower genera ones. A great deal of our efforts will be spent on building  monodromy factorizations for \emph{spin} Lefschetz fibrations. \linebreak We should note that, even though we effectively use the breeding technique to derive new symplectic $4$--manifolds from smaller ones, this is not an inherently symplectic operation; we build pencils/fibrations with locally non-complex nodal singularities in intermediate steps, but then we match all the locally non-complex nodes with locally complex ones and remove these pairs at the end (which corresponds to a $5$--dimensional $3$--handle attachment, and at times to removing an $S^2 \times S^2$ summand from a connected sum); see Remark~\ref{breeding}.

We should also note that more flexible variations of this \emph{signature realization problem} are easier to address, even in the holomorphic category: For Lefschetz \emph{pencils}, where one allows base points, examples with prescribed signatures are easy to obtain using very ample line bundles on suitable compact complex surfaces. Similarly, there are many compact complex surfaces admitting semi-stable fibrations over non-rational complex curves; in fact, when one allows the base surface of the fibration to be of higher genera, there are even smooth fibrations, often called Kodaira fibrations, whose total spaces have positive signatures. While the essential challenge is to describe positive signature Lefschetz fibrations \emph{over the $2$--sphere}, we shall point out that most, and possibly all, of our examples in Theorem~A are not holomorphic; see Remark~\ref{Holomorphic}. Since there are no separating vanishing cycles in our fibrations, by Endo's signature formula \cite{Endo}, none of our examples with non-negative signatures can be hyperelliptic either.

Recall that two $4$--manifolds are said to be \emph{stably diffeomorphic} if they become diffeomorphic after taking connected sums of each with copies of $S^2 \times S^2$. By a classical result of C. T. C. Wall, we have the following immediate corollary to our theorem:

\begin{corb} 
Any closed smooth oriented simply-connected $4$--manifold is stably diffeomorphic to a symplectic Lefschetz fibration.
\end{corb}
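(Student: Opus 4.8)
The plan is to derive the statement from Theorem~A together with Wall's stable diffeomorphism theorem and Rokhlin's theorem, with essentially all the content carried by Theorem~A. Recall that connected sum with $S^2 \times S^2$ adds a hyperbolic summand $H$ to the intersection form, and that Wall's theorem asserts that two closed smooth oriented simply-connected $4$--manifolds with isometric intersection forms become diffeomorphic after connected-summing each with sufficiently many copies of $S^2 \times S^2$. Consequently, given an arbitrary closed smooth oriented simply-connected $4$--manifold $X$, it suffices to exhibit a simply-connected symplectic Lefschetz fibration $Y$ over $S^2$ whose intersection form $Q_Y$ satisfies $Q_X \oplus k\,H \cong Q_Y \oplus l\,H$ for some integers $k,l \geq 0$; then $X$ and the total space of $Y$ are stably diffeomorphic.

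I would next observe that such an isometry exists as soon as $\sigma(X) = \sigma(Y)$ and $X,Y$ have the same type, i.e.\ their intersection forms are both even (equivalently, both spin) or both odd. Indeed, for any closed oriented $4$--manifold one has $b_2 \equiv \sigma \pmod 2$, since $\chi - \sigma$ is even and $\chi = 2 + b_2$ in the simply-connected case; hence $\sigma(X) = \sigma(Y)$ forces $b_2(X) \equiv b_2(Y) \pmod 2$, and by padding the form of smaller rank with hyperbolic summands (and one extra $H$ on each side) we may assume $Q_X \oplus k\,H$ and $Q_Y \oplus l\,H$ are indefinite of equal rank. By the classification of indefinite unimodular symmetric bilinear forms over $\Z$, two such forms are isometric precisely when they share rank, signature, and parity; adding copies of the even form $H$ affects neither signature nor parity, so the claim follows.

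It then remains to realize the pair (signature, type) of $X$ by a simply-connected symplectic Lefschetz fibration, which is exactly what Theorem~A provides. If $X$ is non-spin, apply Theorem~A with prescribed signature $\sigma(X)$: the resulting $Y$ is simply-connected, and it is non-spin --- automatically by Rokhlin's theorem when $\sigma(X)$ is not divisible by $16$, and by taking the non-spin model furnished by Theorem~A when it is --- so $Y$ matches $X$. If $X$ is spin, then $\sigma(X)$ is divisible by $16$ by Rokhlin's theorem, and Theorem~A supplies a simply-connected \emph{spin} Lefschetz fibration $Y$ with $\sigma(Y) = \sigma(X)$, again matching $X$. In both cases the criterion of the previous paragraph applies, and $X$ is stably diffeomorphic to the total space of $Y$.

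I expect no serious obstacle here beyond Theorem~A itself: the only points requiring care are the elementary parity bookkeeping above and the observation that, when $16 \nmid \sigma(X)$, the non-spin conclusion is forced by Rokhlin's theorem and needs no spin control from Theorem~A --- so that the restriction of spin-type flexibility in Theorem~A to signatures divisible by $16$ is precisely matched to what Rokhlin's theorem permits.
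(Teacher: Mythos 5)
Your proposal is correct and follows exactly the route the paper intends: the paper presents Corollary~B as an immediate consequence of Theorem~A via Wall's stabilization theorem, and your argument simply fills in the standard details (classification of indefinite unimodular forms by rank, signature, and parity; the parity constraint $b_2 \equiv \sigma \pmod 2$; and Rokhlin's theorem reconciling the spin case with the divisibility-by-$16$ hypothesis in Theorem~A). No gaps.
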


Furthermore, by crafting the monodromies of our fibrations more carefully and invoking Freedman's celebrated work, we can show that any closed  smooth oriented simply-connected $4$--manifold $X$ with signature $\sigma$, provided its holomorphic Euler charactersitic \mbox{$\chi_h(X):=\frac{1}{4}(\eu(X)+ \sigma (X))$} is integral and sufficiently large (depending solely on $\sigma$), is \emph{homeomorphic} to a symplectic Lefschetz fibration. (Doing this for \emph{all} integral $\chi_h(X)$ greater than a constant that depends on $\sigma$ requires more work.) So we get symplectic Lefschetz fibrations as exotic copies of standard $4$--manifolds that are connected sums of copies of $S^2 \times S^2$, $\CP$ and the $\K$ surface, taken with either orientations. It is a very interesting problem to determine the smallest $\chi_h$ (or $b_2$) one can take for a given $\sigma$. For example, when $\sigma=0$, we can show that:

\smallskip
\begin{corc} \label{corc}
There exist symplectic Lefschetz fibrations, whose total spaces are pairwise non-diffeomorphic, but homeomorphic to $\#_m (S^2 \x S^2)$, where the examples can be chosen so that  $m$ is as small as $127$, or is any odd $m \geq 415$. 
\end{corc}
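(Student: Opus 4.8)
The plan is to realize, for the prescribed values of $m$, the manifold $\#_m(S^2 \x S^2)$ by a symplectic Lefschetz fibration over $S^2$ whose smooth structure is detected to be non-standard by Seiberg--Witten theory. First the numerics: a closed simply-connected \emph{spin} $4$--manifold with $\sigma = 0$ and $b_2 = 2m$ is, by Freedman's classification, homeomorphic to $\#_m(S^2 \x S^2)$, and for it $\chi_h = \tfrac14(\eu + \sigma) = \tfrac{m+1}{2}$; since the fibrations we build have integral holomorphic Euler characteristic, only odd $m$ occur. So it suffices, for $m = 127$ and for every odd $m \geq 415$, to exhibit a simply-connected spin, relatively minimal symplectic Lefschetz fibration $X \to S^2$ with $\sigma(X) = 0$ and $b_2(X) = 2m$ (which, as in Theorem~A, we may also take minimal as a $4$--manifold): then $X$ is homeomorphic to $\#_m(S^2 \x S^2)$ by Freedman, whereas $X$ has $\SW_X(\pm K_X) = \pm 1 \neq 0$ by Taubes while $\SW$ of the connected sum $\#_m(S^2 \x S^2)$ vanishes --- so $X$ is exotic, and distinct such $X$ are pairwise non-diffeomorphic.

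For $m = 127$ I would take the smallest spin, zero-signature Lefschetz fibration produced in the course of proving Theorem~A: a carefully tuned spin monodromy factorization yields a simply-connected spin symplectic $X_0 \to S^2$ with $\sigma(X_0) = 0$ and $\eu(X_0) = 256$ (so $b_2 = 254$, $\chi_h = 64$), giving, after arranging minimality, an exotic $\#_{127}(S^2 \x S^2)$.

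To reach all odd $m \geq 415$, the idea is to climb upward by \emph{fiber summing} along spin fibers with copies of our smallest spin, zero-signature building blocks. The signature stays $0$ by Novikov additivity, matching spin structures across the gluing keeps the total space spin, arranging a block's vanishing cycles to normally generate $\pi_1$ of the fiber keeps the result simply-connected, and each such fiber sum changes $b_2$ by a fixed even amount depending on the block; with two spin, zero-signature building blocks of suitably coprime sizes, a Frobenius-type count then realizes every sufficiently large admissible value of $b_2$, which works out to precisely the odd $m \geq 415$. (The jump from $127$ to $415$ just reflects that the flexible building blocks are bulkier than the single optimized $m = 127$ example.) Furthermore, to secure \emph{infinitely many} pairwise non-diffeomorphic Lefschetz fibrations inside each of these homeomorphism classes, I would take a homologically essential, square-zero torus $T$ with simply-connected complement inside a double node neighborhood arising in the breeding, and apply Fintushel--Stern knot surgery along $T$ with an infinite family of fibered knots of pairwise distinct Alexander polynomials: each resulting $4$--manifold has the same $\eu$, $\sigma$ and $\pi_1$, hence the same homeomorphism type; it stays symplectic; the surgery being supported in a double node neighborhood, it stays a Lefschetz fibration; and the distinct Seiberg--Witten data tell them apart smoothly.

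The step I expect to be the main obstacle is the bookkeeping in the third paragraph: one must hit \emph{every} large odd $m$ while keeping the total space simultaneously spin, of signature zero, and simply-connected, using only the concrete stock of spin monodromy factorizations available --- this is exactly where the labor spent on spin factorizations for Theorem~A is cashed in --- and one must check that the torus carrying the knot surgery genuinely lies in a configuration where the surgery can be encoded as a modification of the monodromy factorization, so that the output is honestly a Lefschetz fibration and not merely a symplectic $4$--manifold.
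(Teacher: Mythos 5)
Your skeleton matches the paper's: Freedman plus Taubes handles the homeomorphism type and the exoticness, the parity constraint on $m$ comes from the integrality of $\chi_h$, and the $m=127$ example is indeed a simply-connected spin, signature zero genus--$9$ fibration with $\eu = 256$ --- concretely, a twisted fiber sum of six copies of the key $\eu = 16$, signature zero genus--$9$ fibration $(X,f)$ of Theorem~\ref{KeyLF}, with the six conjugating diffeomorphisms chosen in the spin mapping class group $\M(\Sigma_9, s_0)$ so that the distinguished triples of disjoint vanishing cycles in the six summands together normally generate $\pi_1(\Sigma_9)$. You correctly flag that this bookkeeping (spin structure preserved under the conjugations, vanishing cycles killing $\pi_1$) is where the work lies, and the paper spends most of the proof on exactly that.

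The genuine gap is in your stable-range step. Realizing every odd $m \geq 415$ by ``two spin, zero-signature building blocks of suitably coprime sizes'' cannot be done with fiber sums alone: a fiber sum of genus--$9$ fibrations changes $\eu$ by $\eu(\mathrm{block}) + 32$, hence changes $m$ by $\tfrac{1}{2}\eu(\mathrm{block}) + 16 \geq 24$, and the only spin, signature zero, genus--$9$ block in stock is $(X,f)$ itself, giving a single step of $24$ in $m$; no second block with a commensurable but distinct step is available, and producing one would be a new construction of comparable difficulty. The paper's resolution is not a second fiber-sum block but a different move entirely: in a twisted fiber sum of twelve copies of $(X,f)$, certain quadruples of vanishing cycles bound copies of $\Sigma_2^2$, and one breeds in the signature zero spin genus--$2$ pencil of Section~\ref{SecGenus2} along such a quadruple, canceling four Dehn twists against the boundary multi-twist and inserting eight; each such breeding raises $m$ by exactly $2$ while preserving the spin condition (the genus--$2$ pencil's quadratic form \eqref{Genus2SpinStructure} is matched to $q_0$). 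With up to eleven breedings available this covers all residues mod $24$, whence $m = 415 + 24n + 2k$. Separately, your final paragraph on knot surgery in a double node neighborhood is neither needed for the statement (the paper's ``pairwise non-diffeomorphic'' is carried by Taubes and the distinct values of $b_2$; the infinite families per homeomorphism class appear only in Theorem~D, via Luttinger and knot surgeries that leave the Lefschetz-fibered category) nor supported: you would have to exhibit a square-zero torus with simply-connected complement sitting in a double node neighborhood of these specific fibrations, together with a spin-compatibility check, none of which is established.
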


These are the first exampes of Lefschetz fibrations in the given homeomorphism classes. \linebreak The spin examples are particularly interesting, and have been sought after for quite some time, in connection with the existence of exotically knotted \emph{orientable} surfaces in the $4$--sphere. Examples of exotically knotted \emph{non-orientable} surfaces in the $4$--sphere were constructed by Finashin, Kreck and Viro \cite{FinashinKreckViro}, and further examples were given by Finashin \cite{Finashin}, all using involutions on genus--$1$ Lefschetz fibrations. In contrast, it is still unknown whether there are any examples of exotically knotted orientable surfaces in the $4$--sphere, whereas double covers along topologically but not smoothly trivial ones would yield exotic $\#_m (S^2 \x S^2)$; see \mbox{Remark \ref{KnottedSurfaces}.}  

The motivation for the Theorem~A in part comes from the \emph{symplectic geography problem}, pioneered by Gompf in \cite{Gompf}, which asks to determine the pairs of integers that can be realized as $\chi_h$ and $c_1^2=2 \eu + 3\sigma$ of minimal symplectic $4$--manifolds for a given spin type and fundamental group, akin to the geography problem for compact complex surfaces \cite{Persson, ZChen, PerssonEtal}.
This is because of the extensive use of Lefschetz fibrations in the past four decades as building blocks for new complex, symplectic and smooth $4$--manifolds. Employing symplectic surgeries, which do not preserve the fibration structure, we can get sharper results and realize new pairs of values as $(\chi_h, c_1^2)$ of simply-connected spin symplectic $4$--manifolds:

\begin{thmd} \label{thmd}
There exist symplectic Lefschetz fibrations over the $2$--torus, which are equivalent via Luttinger surgeries to infinitely many, pairwise non-diffeomorphic symplectic $4$--manifolds homeomorphic to $\#_m (S^2 \times S^2)$, where $m$ can be chosen as small as $23$, as well as arbitrarily large. They further yield examples of infinitely many, pairwise non-diffeomorphic symplectic $4$--manifolds homeomorphic to $\#_m (S^2 \times S^2)$ for \emph{any} odd $m \geq 23$.
\end{thmd}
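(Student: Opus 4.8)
The plan is to build on Theorem~A, so that the hard topological work—constructing simply-connected spin symplectic Lefschetz fibrations over $S^2$ with signature $0$ and controlled Euler characteristic—is already in hand, and then apply Luttinger surgery to lower $b_2$ while staying in the symplectic category. First I would take a spin Lefschetz fibration $X \to S^2$ from Theorem~A with $\sigma(X)=0$, realized by an explicit positive Dehn twist factorization, and fiber sum it with (or construct it so as to contain) enough copies of a genus--$g$ surface bundle structure that the total space admits a large collection of Lagrangian tori carrying the homology and $\pi_1$-data needed for surgery. Concretely, I expect to pass to a Lefschetz fibration over the $2$--torus by the remark following Theorem~A (fiber summing with the trivial bundle $\Sigma_g \times T^2 \to T^2$), which introduces many obvious Lagrangian tori of the form (embedded loop in fiber) $\times$ (loop in base) together with the section tori; these are the tori on which Luttinger surgery will be performed.

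Next I would choose a standard configuration of homologically essential Lagrangian tori $T_1, \dots, T_k$ and framings so that the Luttinger surgeries $X \rightsquigarrow X' = X_{\vec p/\vec q}$ kill $\pi_1$ and reduce $b_1$ to zero, exactly as in the Fintushel--Park--Stipsicz-type constructions of small exotic $4$--manifolds; Luttinger surgery preserves the symplectic structure and changes neither $\sigma$ nor $\eu$, so $X'$ is symplectic with $\sigma(X')=0$ and the same $\chi_h$, and by tracking the intersection form through the surgeries one checks $X'$ is spin with odd $b_2$, hence (by Freedman, using simple connectivity and the signature-$0$ odd-type classification) homeomorphic to $\#_m(S^2 \times S^2)$ with $m = b_2(X')$. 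Minimality, and therefore non-diffeomorphism from the standard $\#_m(S^2\times S^2)$, follows because a minimal symplectic $4$--manifold with $b_2^+>1$ has nonzero Seiberg--Witten invariant (Taubes) while connected sums do not; to get \emph{infinitely many} pairwise non-diffeomorphic examples in a fixed homeomorphism type I would vary the surgery coefficients $\vec q$ along a $\Z$-family, producing distinct Seiberg--Witten invariant sets by the standard Fintushel--Stern knot-surgery/Luttinger-coefficient argument. The value $m=23$ should come from optimizing the genus and number of vanishing cycles in the seed fibration so that $\eu$ is as small as possible while still admitting the required Lagrangian tori; the ``any odd $m \geq 23$'' statement is obtained by additionally fiber summing with standard blocks that raise $b_2$ by the needed even amount before running the same surgery scheme, or by stabilizing with extra $T^2\times S^2$-type pieces that are then partially surgered away.

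The main obstacle I anticipate is the bookkeeping that simultaneously controls three things at once: (i) that the chosen Lagrangian tori in the Lefschetz-fibration-over-$T^2$ total space really do generate a $\pi_1$ on which Luttinger surgery can impose the relations needed to reach $\pi_1=1$ (this requires the vanishing cycles of the Theorem~A fibration and the $T^2$-base loops to interact in a prescribed way, so the seed factorization cannot be a black box—it must be the specific spin factorization built earlier), and (ii) that after the surgeries the manifold is still spin with $b_2$ exactly $23$ and not, say, $22$ or $24$, which pins down the count of surgeries and their effect on $H_2$; and (iii) that minimality survives—Luttinger surgery on a minimal symplectic manifold can in principle create $-1$-spheres, so I would invoke Usher's theorem on minimality of symplectic sums/Luttinger surgeries, or directly verify minimality via the Seiberg--Witten basic classes of $X'$. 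Getting all of $\{$symplectic, spin, simply-connected, $\sigma=0$, $b_2=23$, minimal, infinitely many$\}$ to hold in one family is the delicate part; each property is individually routine, but the genus and torus-configuration must be tuned so none of them fails, and that tuning is where the genuine work lies.
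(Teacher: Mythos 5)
Your overall strategy is the one the paper follows: extend the signature-zero spin genus--$9$ fibration of Theorem~A over the $2$--torus (the paper does this by appending a trivial commutator $[\mathcal{A},\mathcal{B}]$ with $\mathcal{A}=\mathcal{B}=1$, giving $X'=(X\setminus\nu(F))\cup(\Sigma_9\times\Sigma_1^1)$ with $\eu(X')=48$, $\sigma=0$, spin), perform Luttinger surgeries on Lagrangian tori of the form $\alpha\times\gamma$ to kill $\pi_1$ while preserving $\eu$, $\sigma$ and the even intersection form, and conclude by Freedman and Taubes. The value $m=23$ indeed falls out of $b_2^+=\frac{1}{2}(\eu(X')-2)=23$, and the stable range is handled by fiber summing along tori with a small auxiliary spin block (the paper uses a $Y$ obtained from $\Sigma_2\times\Sigma_2$ by six Luttinger surgeries, with $\eu(Y)=4$, $\sigma(Y)=0$, and $\pi_1(Y\setminus(T'\sqcup T''))$ normally generated by the two curves that get glued to the trivial group), so your outline for those parts is essentially the paper's. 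Your worry (iii) about minimality is moot: the theorem does not claim minimality, and non-diffeomorphism to $\#_m(S^2\times S^2)$ follows from Taubes alone since that connected sum has vanishing Seiberg--Witten invariants.

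The genuine gap is in your mechanism for producing \emph{infinitely many pairwise non-diffeomorphic symplectic} manifolds in a \emph{fixed} homeomorphism class by ``varying the surgery coefficients $\vec q$ along a $\Z$-family.'' A $(T,\lambda,k)$ Luttinger surgery imposes (roughly) the relation $\mu\,\lambda^{k}=1$; the $\pi_1$-killing argument uses $k=1$ so that once the meridian $\mu$ (a conjugate of a commutator) dies, the generator $\lambda$ dies with it. For $k\neq\pm1$ one only gets $\lambda^{k}=1$, so simple connectivity — and, since these tori are homologically essential, even the first homology and hence the homeomorphism type — is not preserved across the family; moreover, distinguishing the members would require Seiberg--Witten computations for essential-torus surgeries that are not supplied by the standard null-homologous-torus formula. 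The paper avoids this entirely: it reserves one Lagrangian torus $T$ (descending from $a_3'\times b'$) untouched by the surgeries, checks $\pi_1(X''\setminus T)=1$ because the meridian of $T$ is a conjugate of a commutator that is already trivial in $\pi_1(X'')$, perturbs the symplectic form to make $T$ symplectic, and performs Fintushel--Stern knot surgery along $T$ with fibered knots of distinct Alexander polynomials. This keeps every member symplectic, simply connected and spin, and distinguishes them by their Seiberg--Witten invariants. If you want to salvage your coefficient-varying idea you would have to isolate a torus whose generator is killed by the other relations independently of $k$ and then still prove the resulting symplectic manifolds are pairwise non-diffeomorphic, which is precisely the work the knot-surgery route makes unnecessary.
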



To the best of our knowledge, Theorem~C contains examples with the smallest topology among the exotic $\#_m (S^2 \times S^2)$ discovered to date.  Symplectic $4$--manifolds homeomorphic but not diffeomorphic to $\#_m (S^2 \times S^2)$ were first constructed by \mbox{J. Park} in \cite{Park} for unspecified large $m$, and this result was improved dramatically by Akhmedov and \mbox{D. Park} to $m \geq 275$ \cite{APspin1} and later to $m \geq 175$ \cite{APspin2}.
(Recent talks by Sakall{\i}  announce further improvement to  $m \geq 27$ in joint work  with the same authors \cite{APS}, but this preprint is not publicly available at the time of writing.) All these previous works use compact complex surfaces with positive signatures built by algebraic geometers as essential ingredients. Our main ingredients, the symplectic Lefschetz fibrations over the $2$--torus we build,  are not homotopy equivalent to any complex surfaces.

We discuss how to describe spin structures on Lefschetz pencils and fibrations using their monodromy factorizations in Section~2. There we present a way to calculate the divisibility of the fiber class from the monodromy factorization, and produce a very handy criterion for equipping the fibration with a spin structure, building on Stipsicz's work in \cite{StipsiczSpin}; see Proposition~\ref{divisibility} and Theorem~\ref{SpinLF2}. Both are employed repeatedly in our constructions of spin Lefschetz fibrations to follow. In the proof of our main theorem, Theorem~A, the signature zero genus--$9$ Lefschetz fibration over the $2$--sphere singled out in Theorem ~\ref{KeyLF} will play a key role. The construction of this fibration, which spans the entire Section~3, is the most technically involved one in our paper, so we chose to present it in multiple steps, in each step producing positive factorizations for Lefschetz pencils that might be of particular interest themselves.
Even more effort is spent on establishing whether this fibration has a primitive fiber, since we were not able to  identify any sections of this fibration. (We plan to examine the existence of (multi)sections of this  fibration elsewhere.) We will finish the proof of Theorem~A in Section~4, and prove Corollary~C and Theorem~D in Section~5, using the explicit monodromies of our signature zero Lefschetz fibrations, while now paying special attention to killing the fundamental group efficiently so as to produce symplectic $4$--manifolds in the desired homeomorphism classes. 

\bigskip
\noindent \textit{Conventions:} Any Lefschetz fibration in this article is assumed to have non-empty critical set, and any Lefschetz pencil has non-empty base locus. All are assumed to be relatively minimal, i.e. no exceptional spheres contained in the fibers. Unless explicitly specified otherwise, the base of our Lefschetz fibrations is the $2$--sphere. We denote by $\Sigma_{g,k}^b$ a compact orientable surface of genus $g$ with $b$ boundary components and $k$ marked points in its interior, and we drop $b$ or $k$ from the notation when they are zero. The mapping class group $\M(\Sigma_{g,k}^b)$ then consists of orientation-preserving diffeomorphisms of $\Sigma_{g,k}^b$ which fix all the boundary points and marked points, modulo isotopies of the same type.  A right-handed or positive Dehn twist along a simple closed curve or loop $c$ on a surface $\Sigma$ is denoted by $t_c$ in $\M(\Sigma)$. Our products of mapping classes, and in particular of Dehn twists, act on curves starting with the rightmost factor.  We express a  monodromy factorization of a genus--$g$ Lefschetz fibration by $t_{c_1} \cdots t_{c_n} =1$ in $\M(\Sigma_g)$, and that of a genus--$g$ Lefschetz pencil with $b$ base points as $t_{c_1} \cdots t_{c_n} = t_{\delta_1} \cdots t_{\delta_b}$ in $\M(\Sigma_g^b)$, where $\{\delta_i$\} are boundary parallel curves along distinct boundary components of $\Sigma_g^b$. We often refer to these as positive factorizations (of identity or boundary multi-twist). The reader can turn to  \cite{GompfStipsicz} for general background on Lefschetz fibrations and pencils, monodromy factorizations and symplectic $4$--manifolds, and  to \cite{FarbMargalit, EndoNagami} for more on relations in the mapping class group and signature calculations (while keeping in mind that in \cite{FarbMargalit} the authors' convention is to use left-handed Dehn twists instead).

\vspace{0.2in}
\noindent \textit{Acknowledgements. } The first author was  supported by  a Simons Foundation Grant (634309) and an NSF grant (DMS-200532).

\clearpage
\section{Spin structures on Lefschetz pencils and fibrations}  \label{sec:spin}

In this section, we will first review some preliminary results on spin structures on Lefschetz pencils and fibrations, and present a few quintessential examples that will be used in our later constructions. We will then discuss how to calculate the divisibility of the homology class of the regular fiber of a Lefschetz fibration, and present a practical way to build a spin structure on its total space, solely using monodromy factorizations. The reader can  turn to \cite{Kirby, GompfStipsicz, StipsiczSpin} for basic definitions and background results on spin structures on $3$-- and $4$--manifolds, and to \cite{Browder, Johnson, Masbaum} for spin structures on surfaces, quadratic forms and spin mapping class groups.  While we will focus on Lefschetz pencils and fibrations, we note that \emph{everything} we discuss is applicable, mutatis mutandis, to achiral pencils and fibrations. 

Given a Lefschetz pencil $(X,f)$, we can easily determine whether $X$ admits a spin structure by studying the $\Z_2$--homology classes of the Dehn twist curves in the monodromy factorization of $f$:

\begin{theorem}[Baykur-Hayano-Monden \cite{BaykurHayanoMonden}] \label{SpinLP}
Let $(X,f)$ be a Lefschetz pencil with monodromy factorization \ $t_{c_1}\cdots t_{c_n}=t_{\delta_1}\cdots t_{\delta_b}$ in $\M(\Sigma_g^b)$, $b \geq 1$. Then $X$ admits a spin structure if and only if there is a quadratic form $q\colon H_1(\Sigma_g^b;\Z_2) \to \Z_2$ with respect to the $\Z_2$--intersection pairing, such that $q(c_i)=1$ for all i, and $q(\delta_j)= 1$ for some $j$. 
\end{theorem}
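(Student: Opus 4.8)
The plan is to convert the spin question about the pencil into a spin question about an associated Lefschetz \emph{fibration}, record candidate spin structures by quadratic forms on a regular fiber via Johnson's correspondence, and then decide which of these forms extend over the handles of a fibered handle decomposition.

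First I would remove small open $4$--balls around the $b$ base points. This turns $(X,f)$ into a Lefschetz fibration $X^{\circ}\to S^2$ with fiber $\Sigma_g^b$, whose $b$ boundary $3$--spheres carry the Hopf fibration of Euler number $-1$; equivalently $X^{\circ}=\hat X\setminus\bigsqcup_j\nu(S_j)$, where $\hat X=X\# b\,\CPb$ is the blow--up and the $S_j$ are its $(-1)$--sections, and the boundary multitwist $t_{\delta_1}\cdots t_{\delta_b}$ in the factorization records exactly these Euler numbers. As $X^{\circ}=X\setminus\bigsqcup_j B^4$, the inclusion gives an isomorphism $H^2(X;\Z_2)\cong H^2(X^{\circ};\Z_2)$ sending $w_2(X)$ to $w_2(X^{\circ})$, so $X$ is spin if and only if $X^{\circ}$ is, and it suffices to work with $X^{\circ}$.

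Next I would use a handle decomposition of $X^{\circ}$ adapted to the fibration: a fiber neighborhood $\Sigma_g^b\times D^2$ (the $0$-- and $1$--handles), one Lefschetz $2$--handle along each vanishing cycle $c_i$ attached with framing $-1$ relative to the fiber framing, and then the handles of index $\geq 2$ capping off the complementary disk in $S^2$, with the gluing near $\partial\Sigma_g^b$ twisted in the manner dictated by the boundary multitwist. A spin structure on $X^{\circ}$ restricts to one on a regular fiber $\Sigma_g^b$, and by Johnson's theorem such spin structures are in bijection with quadratic refinements $q\colon H_1(\Sigma_g^b;\Z_2)\to\Z_2$ of the $\Z_2$--intersection pairing; conversely $X^{\circ}$ is spin precisely when some such $q$ extends over every handle. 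Extensions over $3$--handles are automatic, since spin structures on $S^2$ are unique, so only the $2$--handles constrain $q$. Over the Lefschetz $2$--handle along $c_i$ the constraint is $q(c_i)=1$: this handle induces on $c_i$ the spin structure of its bounding core disk, and, measured against the surface framing of $c_i$ (the framing used to define $q$), that is the non--bounding spin structure, because the two framings differ by $1$. The same conclusion follows from the identity $t_c^{*}q=q+(q(c)+1)\langle\,\cdot\,,c\rangle$, which says $q(c_i)=1$ is exactly invariance of $q$ under $t_{c_i}$ — necessary if $q$ is to descend from a fibration with monodromy built from the $t_{c_i}$.

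It remains to identify the condition imposed by the capping--off in the presence of the boundary multitwist, i.e.\ by the way the deleted $(-1)$--section neighborhoods are reglued. I would do this by tracking framings and spin structures near each boundary curve $\delta_j$: the Euler number $-1$ of the corresponding boundary sphere introduces a sign, and collating these contributions over $j=1,\dots,b$, using that the $\delta_j$ are pairwise disjoint with $\sum_j[\delta_j]=0$ in $H_1(\Sigma_g^b;\Z_2)$ (so that the $q(\delta_j)$ automatically sum to zero), should collapse to the single requirement $q(\delta_j)=1$ for at least one $j$. Conversely, given a quadratic form $q$ with $q(c_i)=1$ for all $i$ and $q(\delta_{j_0})=1$ for some $j_0$, the associated spin structure on the fiber extends over all the handles by the above analysis, producing a spin structure on $X^{\circ}$ and hence on $X$, which gives both implications. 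I expect the capping step to be the real difficulty: pinning down all framing and orientation conventions so that the contribution of the $(-1)$--sections (equivalently the boundary multitwist) emerges in precisely the stated form ``$q(\delta_j)=1$ for some $j$'' rather than, say, for all $j$; one must also take some care with Johnson's correspondence in the bordered, degenerate--pairing case and with the precise handle structure of the twisted capping region.
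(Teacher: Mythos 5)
First, a point of orientation: the paper does not prove Theorem~\ref{SpinLP} at all --- it is imported verbatim from \cite{BaykurHayanoMonden} --- so there is no in-paper argument to match yours against. What the paper does develop (the fibered handle decomposition $D^2\times\Sigma \cup W \cup D^2\times\Sigma$, the Johnson correspondence between spin structures and quadratic forms, and the fact that a Lefschetz $2$--handle along $c$ is spin-compatible iff $q(c)=1$) is exactly the machinery you deploy, and your first half is sound: the reduction from $X$ to $X^{\circ}=X\setminus\bigsqcup_j B^4$, the observation that spin structures extend over $3$-- and $4$--handles for free, and the derivation of $q(c_i)=1$ from the $-1$--framed Lefschetz handles (or equivalently from $t_c^*q=q+(q(c)+1)\langle\cdot,c\rangle$) are all correct and consistent with Section~2 of the paper.

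The genuine gap is that the step carrying the entire content of the theorem --- the condition ``$q(\delta_j)=1$ for some $j$'' --- is never actually derived; you only assert that tracking framings near the boundary ``should collapse'' to it, and you yourself flag that you cannot yet rule out getting ``for all $j$'' instead. This is not a convention-chasing detail. The final gluing is of a single piece $\Sigma_g^b\times D^2$ along the mapping torus of $t_{\delta_1}\cdots t_{\delta_b}$, and the obstruction to extending the spin structure is the restriction to \emph{one} curve $\{x\}\times S^1$; computing that restriction by pushing $x$ toward a chosen $\delta_j$ yields a condition phrased in terms of that single $q(\delta_j)$. Since a priori the values $q(\delta_j)$ are constrained only by $\sum_j q(\delta_j)=q\bigl(\sum_j[\delta_j]\bigr)=0$, one must \emph{prove} that the hypothesis $q(c_i)=1$ for all $i$, together with the relation $t_{c_1}\cdots t_{c_n}=t_{\delta_1}\cdots t_{\delta_b}$, forces all the $q(\delta_j)$ to agree --- this is what reconciles the existential phrasing with the well-definedness of the extension criterion, and also with the elementary necessary condition that $[F]^2=b$ be even (note $b$ odd plus ``all $q(\delta_j)$ equal'' forces $q(\delta_j)=0$). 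Your plan, as written, would at best produce $b$ separate framing computations with no mechanism to compare them; supplying that comparison (e.g.\ via the monodromy action on arcs joining distinct boundary components, or via an identification of the relevant classes in $H_2(X;\Z_2)$ and Wu's formula) is the missing idea, and without it the ``if and only if'' is not established in either direction.
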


Given a Lefschetz fibration $(X,f)$, we can also determine whether $X$ admits a spin structure or not from the monodromy factorization of $f$, provided the integral homology class of the regular fiber $F$ is primitive and there is information available on the self-intersection of an algebraic dual $S$ of $F$  in $H_2(X)$, i.e. $F \cdot S =1$. (These extra conditions will be the focus of our discussions to follow.) A characterization in this case was given by Stipsicz in \cite{StipsiczSpin}, which motivates and predates Theorem~\ref{SpinLP}. We rephrase it as:

\begin{theorem}[Stipsicz \cite{StipsiczSpin}]  \label{SpinLF}
Let $(X,f)$ be a genus--$g$ Lefschetz fibration with monodromy factorization \,$t_{c_1} \cdots t_{c_n}=1$. Assume that the homology class of the regular fiber has an algebraic \mbox{dual $S$ in $H_2(X)$.} Then $X$ admits a spin structure if and only if  $S$ has even self-intersection, and there is a quadratic form $q\colon H_1(\Sigma_g;\Z_2) \to \Z_2$ with respect to the $\Z_2$--intersection pairing, such that $q(c_i)=1$ for all i.
\end{theorem}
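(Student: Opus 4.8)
The statement is homological. Recall that a closed oriented $4$--manifold $X$ is spin precisely when $w_2(X)=0$; since the $\Z_2$--intersection form of $X$ is unimodular, while the map $\alpha\mapsto\alpha\cdot\alpha$ on $H_2(X;\Z_2)$ is \emph{linear} (because $2\,\alpha\cdot\beta\equiv 0$) and equals $\langle w_2(X),\alpha\rangle$ by Wu's formula, $X$ is spin if and only if $\alpha\cdot\alpha=0$ for every $\alpha$ in some spanning set of $H_2(X;\Z_2)$. So the plan is to produce a spanning set adapted to the fibration and to evaluate these self-intersections against the monodromy data.

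For the spanning set I would use the standard handle picture of a Lefschetz fibration with trivial total monodromy: picking a disk $D_-\subset S^2$ containing all the critical values and letting $D_+$ be the complementary disk, one has $X=A\cup_{\Sigma_g\times S^1}B$ with $B=f^{-1}(D_+)\cong\Sigma_g\times D^2$ and $A=f^{-1}(D_-)=(\Sigma_g\times D^2)\cup h_1\cup\cdots\cup h_n$, where each $h_i$ is a $2$--handle attached along a lift of the vanishing cycle $c_i$ with framing $-1$ relative to the fiber framing. A Mayer--Vietoris argument shows $H_2(X;\Z_2)$ is spanned by the regular fiber class $[F]$, the class $[S]$ of the algebraic dual (needed only homologically, so no embedded section is required), the closed-up Lefschetz thimbles, and ``horizontal'' classes $R_\gamma$ obtained by sweeping a simple closed curve representing $\gamma\in H_1(\Sigma_g;\Z_2)$ around $S^2$ and patching it across each critical value with a collar of the corresponding thimble; every class on this list except $[S]$ has zero intersection with $[F]$, and $[F]\cdot[F]=0$.

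I would do the easy direction first. If $X$ is spin then $[S]\cdot[S]\equiv\langle w_2(X),[S]\rangle\equiv 0\pmod 2$, so $S$ has even self-intersection; and restricting the spin structure to a regular fiber yields a quadratic form $q$ on $H_1(\Sigma_g;\Z_2)$ refining the intersection pairing, which by restriction to $f^{-1}(\gamma_i)$, the mapping torus of $t_{c_i}$ for a small loop $\gamma_i$ about the $i$--th critical value, must be invariant under each $t_{c_i}$; a standard computation of the Dehn twist action on quadratic forms identifies this invariance with $q(c_i)=1$. For the converse, given such a $q$ and $S$ of even self-intersection, I would build a spin structure on $X$ along the decomposition above: equip the inner $\Sigma_g\times D^2\subset A$ with the spin structure determined by $q$; extend it across each $h_i$, which is possible exactly because the attaching framing is $-1$ relative to the fiber framing while $q(c_i)=1$ gives $c_i$ the matching (non-bounding) spin framing; and finally extend across $B\cong\Sigma_g\times D^2$, where the sole obstruction is that the induced spin structure on the $S^1$--factor of $\Sigma_g\times S^1=A\cap B$ be the bounding one --- a parity that equals $[S]\cdot[S]\bmod 2$, as $[S]\cdot[S]$ records the framing defect of the dual over the two hemispheres. (Alternatively, one can bypass the construction and evaluate $\langle w_2(X),R_\gamma\rangle$ and $\langle w_2(X),[S]\rangle$ directly, the thimble patches contributing terms governed by the $q(c_i)$.)

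The hard part will be the bookkeeping in the last two steps: establishing, with correct signs and normalizations, the two local identities --- that a surface spin structure extends across a Lefschetz $2$--handle if and only if the framing defect relative to $q(c_i)$ is even, and that the $S^1$--spin structure on $A\cap B$ has parity $[S]\cdot[S]\bmod 2$ --- so that ``framing $-1$'' and ``$q(c_i)=1$'' genuinely cancel rather than reinforce. Both are model computations (a $2$--handle along a curve in a spin surface, and the framing defect of a section read off over the two hemispheres), and it is worth calibrating the resulting criterion against familiar cases such as the elliptic Lefschetz fibrations on $\CP\#9\CPb$ (not spin; section square $-1$) and on the $\K$ surface (spin; section square $-2$), which it must correctly distinguish.
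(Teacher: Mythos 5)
Your proposal is correct and takes essentially the same route as the paper: the theorem is quoted from Stipsicz rather than reproved, but the discussion immediately following its statement in Section~2 lays out exactly your argument --- the decomposition $X=(D^2\times\Sigma_g)\cup W\cup(D^2\times\Sigma_g)$, the fact (via Picard--Lefschetz and the spin mapping class group) that a Lefschetz $2$--handle along $c_i$ extends the spin cobordism precisely when $q(c_i)=1$, and the final obstruction being that the induced spin structure on the $S^1$--factor of $\Sigma_g\times S^1$ be the bounding one, which is where the parity of $S\cdot S$ enters. The two ``model computations'' you defer at the end are indeed the only substantive checks and constitute the content of Stipsicz's original proof, so your plan, including the calibration against the elliptic fibrations on $\CP\#9\CPb$ and the $\K$ surface, is sound.
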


In our revised statement of Theorem~\ref{SpinLF}, we replaced the original algebraic condition for the vanishing cycles $\{c_i\}$ given in \cite{StipsiczSpin}, with an equivalent condition in terms of a quadratic form, which we find to be handier for our calculations. (See below for an explanation of this condition.) Further, we have stated the existence of an algebraic dual as a hypothesis, which always holds when the homology class of the regular fiber is primitive in $H_2(X)$. In \cite{StipsiczSpin, StipsiczPrimitive} it was claimed that every Lefschetz fibration has a primitive fiber class, but there is a mistake in the proof of this claim (and there are counter-examples in the achiral case, like Matsumoto's genus--$1$ achiral Lefschetz fibration on $S^4$); see e.g. \cite[Appendix]{BaykurMinimality} for an explanation. In the case of a pencil, the induced handle decomposition removes the need for additional assumptions on the homological dual of the fiber class. Note that when $X$ is spin, the condition on the vanishing cycles is implied regardless of the divisibility of the fiber class. 

Recall that a function $q\colon H_1(\Sigma_g;\Z_2) \to \Z_2$ describes a quadratic form with respect to the \mbox{$\Z_2$--intersection} pairing if \,$q(a+b)=q(a)+q(b)+ a \cdot b$ (mod 2) for every $a, b \in H_1(\Sigma_g;\Z_2)$.\linebreak There is a bijection between the set of isomorphism classes of spin structures $\mathrm{Spin}(\Sigma_g)$  and the set of such quadratic forms on $H_1(\Sigma_g;\Z_2)$ \cite{Browder, Johnson} ---and a similar correspondence holds for $\Sigma_g^b$, where the quadratic form is no longer non-singular.  From the proof of the above theorems, one can in fact deduce that \emph{any} spin structure on $X$ comes from one on $\Sigma_g^b$ or $\Sigma_g$, for which the monodromy curves satisfy the spin property given in the statements.

An important invariant of a quadratic form $q$ on $H_1(\Sigma_g;\Z_2)$ is its $\Z_2$--valued \emph{Arf invariant}  $\mathrm{Arf}(q)$, which can be most easily calculated as  $\mathrm{Arf}(q) = \sum_{i=1}^q \, q(\alpha_i) q(\beta_i)$, where $\{\alpha_i, \beta_i\}$ is \emph{any} symplectic basis for $H_1(\Sigma_g;\Z_2)$. For $q_s$, $q_{s'}$ the quadratic forms corresponding to $s, s' \in \mathrm{Spin}(\Sigma_g)$ respectively, there exists a spin diffeomorphism $\phi\colon (\Sigma_g, s) \to (\Sigma_g, s')$, or equivalently $\phi^*(q_{s'}) = q_{s}$, \linebreak if and only if $\mathrm{Arf}(q_s)= \mathrm{Arf}(q_{s'})$ \cite{Arf, Johnson}.

For a fixed spin structure $s \in \mathrm{Spin}(\Sigma_g)$, let $\M(\Sigma_g, s)$ be the \emph{spin mapping class group}, which consists of $\phi \in \M(\Sigma_g)$ such that $\phi^*q=q$, where $q$ is the quadratic form corresponding to $s$. \linebreak 
Since $q$ respects the $\Z_2$--intersections, it follows from the Picard-Lefschetz formula that a Dehn twist $t_c$ is in $\M(\Sigma_g, s)$ if and only if $q(c)=1$. Now, if $Y$ is a  $\Sigma_g$--bundle over $S^1$ with \mbox{monodromy $\phi$,} then it admits a spin structure $\mathfrak{s}$ that restricts to a spin structure $s$ on the fibers if and only if $\phi \in \M(\Sigma_g, s)$. When this holds, attaching a Lefschetz $2$--handle to $Y \times I$ along a loop $c$ on a fiber prescribes a fibered cobordism from $Y$ to $Y'$, another $\Sigma_g$--bundle with monodromy $\phi'=t_c  \circ \phi$, if and only if $t_c \in \M(\Sigma_g, s)$ \cite{Radosevich, StipsiczSpin}. With these in mind, we will next illustrate how Dehn twists in  $\M(\Sigma_g, s)$ play a key role in building spin Lefschetz fibrations.

Let $(X,f)$ be a Lefschetz fibration with monodromy factorization \,$t_{c_1} \cdots t_{c_n}=1$, and assume that there is a quadratic form $q$ satisfying the hypothesis of Theorem~\ref{SpinLF}. The Lefschetz fibration $f$ on $X$ induces a decomposition $X= (D^2 \times \Sigma_g) \, \cup \, W \, \cup \, (D^2 \times \Sigma_g)$, where 
$W = (S^1 \x \Sigma_g \x I) \, \cup \,\sum_{i=1}^n \, h_i$
consists of Lefschetz $2$--handles $h_i$ attached along the Dehn twist curves $c_i$ on $\Sigma_g$. For $s \in \mathrm{Spin}(\Sigma_g)$ corresponding to the quadratic form $q$, take the spin structure on the first copy of $D^2 \times \Sigma_g$ which is the product of  the unique spin structure on $D^2$ with $s$. Induced spin structure $\mathfrak{s}$ on the boundary $\partial D^2 \x \Sigma_g$ is obviously the product of the bounding spin structure on $S^1$ and $s$ \cite{StipsiczSpin}. Now, by our discussion in the previous paragraph, $W$ yields a spin cobordism from $\partial D^2 \times \Sigma_g$  to $\partial X'$, for $X' =( D^2 \times \Sigma_g) \cup W$. As $t_{c_1} \cdots t_{c_n}=1$, there is a diffeomorphism $\partial X' \to  S^1 \times \Sigma_g$, which maps the induced spin structure $\mathfrak{s'}$ on $\partial X'$ to a spin structure $\mathfrak{s''}$ on $S^1 \times \Sigma_g$. However, for $\mathfrak{s''}$ to extend over the remaining $D^2 \times \Sigma_g$, it should be coming from a product spin structure where the spin structure on the $S^1$ factor is the bounding one.

We note that when we double the monodromy factorization, the spin condition for the vanishing cycles alone is sufficient to conclude that the total space of the new fibration is spin:

\begin{prop} \label{doubling}
Let $(X,f)$ be a Lefschetz fibration with monodromy factorization $(t_{c_1} \cdots t_{c_n})^2=1$, where \,$t_{c_1} \cdots t_{c_n}=1$  in $\M(\Sigma_g)$. Then $X$ admits a spin structure if and only if there is a quadratic form $q\colon H_1(\Sigma_g;\Z_2) \to \Z_2$ with respect to the $\Z_2$--intersection pairing, such that $q(c_i)=1$ for all i.
\end{prop}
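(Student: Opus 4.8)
The plan is to argue directly from the handle decomposition that the Lefschetz fibration $f$ induces on $X$, sidestepping Theorem~\ref{SpinLF} (and hence any need to control the divisibility of the fiber class or the existence of an algebraic dual, neither of which behaves well under fiber summing). Let $X_0$ be the Lefschetz fibration carrying the positive factorization $t_{c_1}\cdots t_{c_n}=1$; then $X$ is the fiber sum $X_0 \#_F X_0$, with induced decomposition
\[
X \;=\; (D^2\times\Sigma_g)\;\cup\; W \;\cup\; W' \;\cup\; (D^2\times\Sigma_g),
\]
where $W$ and $W'$ are \emph{identical} cobordisms, each built from $S^1\times\Sigma_g\times I$ by attaching, with the Lefschetz framing, $2$--handles along the curves $c_1,\dots,c_n$ lying on fibers. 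Crucially, since $t_{c_1}\cdots t_{c_n}=1$ in $\M(\Sigma_g)$, the two internal $3$--manifolds that $W$ and $W'$ abut are the \emph{trivial} bundle $Y:=S^1\times\Sigma_g$. The forward implication is the standard one: if $X$ is spin, restricting the spin structure to a regular fiber yields $s\in\mathrm{Spin}(\Sigma_g)$ whose quadratic form $q_s$ evaluates to $1$ on every vanishing cycle --- in particular on $c_1,\dots,c_n$ --- as noted above, regardless of the fiber's divisibility.

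For the converse, fix $s\in\mathrm{Spin}(\Sigma_g)$ with $q_s=q$, put on the first $D^2\times\Sigma_g$ the product of the unique spin structure on $D^2$ with $s$, and observe it restricts on $Y$ to the spin structure $\mathfrak s_0$ that is the product of the \emph{bounding} spin structure on $S^1$ with $s$. The spin structures on $Y$ restricting to $s$ on the fibers form a torsor over $H^1(S^1;\Z_2)\cong\Z_2$; write $\mathfrak s_1$ for the other one, built from the non-bounding $S^1$--structure. Only $\mathfrak s_0$ extends over a copy of $D^2\times\Sigma_g$. Since $q(c_i)=1$, every $t_{c_i}\in\M(\Sigma_g,s)$, so by the discussion preceding the proposition the cobordism $W$ admits a spin structure restricting to $\mathfrak s_0$ on its incoming end; let $\mathfrak s_{o}$, with $o\in\{0,1\}$, be what it restricts to on the outgoing end.

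The crux is that traversing $W$ and then the identical cobordism $W'$ restores the spin structure $\mathfrak s_0$, after which it extends over the last cap and so over all of $X$. First, being the same cobordism, $W'$ also sends $\mathfrak s_0\mapsto\mathfrak s_o$. Second, the class $\eta\in H^1(Y;\Z_2)$ pulled back from $H^1(S^1;\Z_2)$ extends over $W$ --- attaching $2$--handles along the fiber curves $c_i$ does not kill the $S^1$--generator of $H_1$ --- and restricts to $\eta$ on each end; tensoring a compatible spin structure on $W$ by this extension shows that $W$, hence $W'$, acts on the two--element set $\{\mathfrak s_0,\mathfrak s_1\}$ by the translation $\mathfrak s_\delta\mapsto\mathfrak s_{\delta+o}$. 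Therefore the composite $W\cup W'$ acts by $\mathfrak s_\delta\mapsto\mathfrak s_{\delta+2o}=\mathfrak s_\delta$; feeding in $\mathfrak s_0$ we recover $\mathfrak s_0$ on the boundary of the final $D^2\times\Sigma_g$, which extends over it. Hence $X$ admits a spin structure.

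The one genuinely non-formal ingredient, and the step I would be most careful with, is the second observation above: that the handle cobordism acts on the pair of admissible spin structures on $S^1\times\Sigma_g$ by a \emph{translation} (equivalently, that $\eta$ extends over $W$), so that performing it twice is trivial. This is precisely the quantity that is not free for a single fibration --- there, by Theorem~\ref{SpinLF}, the analogous discrepancy records the parity of the self-intersection of an algebraic dual of the fiber, and can be odd --- whereas doubling forces the two equal contributions to cancel in $\Z_2$. Everything else (the forward direction, the reduction to the two candidate spin structures on $S^1\times\Sigma_g$, the extension over the caps) is routine given the discussion preceding the proposition.
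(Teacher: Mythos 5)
Your proof is correct and follows essentially the same route as the paper: the same decomposition $X=(D^2\times\Sigma_g)\cup W\cup W\cup(D^2\times\Sigma_g)$ into two identical spin cobordisms capped off by trivial pieces, with the observation that traversing $W$ twice cancels any flip of the spin structure on the $S^1$ factor. The paper packages this as the doubling $X=X'\cup_{\Phi^{-1}\Psi\Phi}X'$ and notes that the reflection gluing matches the induced boundary spin structures, whereas you make the cancellation explicit by showing that $W$ acts on the two admissible spin structures on $S^1\times\Sigma_g$ by a translation (because the class $\eta$ pulled back from $H^1(S^1;\Z_2)$ extends over $W$, as $\eta(c_i)=0$ for all $i$) --- which is precisely the content of the paper's closing parenthetical remark, here supplied with its justification.
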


\begin{proof}
In this case, the Lefschetz fibration $f$ with doubled monodromy induces a decomposition 
\[ X= (D^2 \times \Sigma_g) \, \cup \, W \, \cup \, W \, \cup \, (D^2 \times \Sigma_g) = X' \cup X' \, , \]
where, $W = (S^1 \x \Sigma_g \x I) \, \cup \sum_{i=1}^n h_i$, with $2$--handles $h_i$ attached along $c_i$ on $\Sigma_g$, is a spin cobordism as we argued above. So $X' = (D^2 \times \Sigma_g) \, \cup \, W$ admits a spin structure per our hypothesis. As before, $t_{c_1} \cdots t_{c_n}=1$ induces an identification $\Phi\colon \partial X' \to  S^1 \times \Sigma_g$, and for $\Psi$ the orientation-reversing diffeomorphism on $S^1 \times \Sigma_g$ which is the product of the complex conjugation on $S^1$ and identity on $\Sigma_g$, we have $X=X' \cup_{\Phi^{-1} \Psi \Phi} X'$. Clearly,  this gluing matches the induced spin structures $\mathfrak{s'}$ on both copies of $\partial X'$, and the spin structures we have on the two copies of $X'$ then extend to a spin structure on the whole of $X$. (So we see that if the cobordism $W$ from $S^1 \times \Sigma_g$ to itself happens to flip the spin structure on the $S^1$ factor, attaching $W$ for a second time, we still get back the bounding spin structure on the $S^1$ factor.) 
\end{proof}

The doubled monodromy in Proposition~\ref{doubling} obviously amounts to taking an untwisted fiber sum $(X,f)=(X',f')\, \# \,(X',f')$, where $f'$ has monodromy $t_{c_1} \cdots t_{c_n}=1$. In \cite{StipsiczFiberSum} Stipsicz argues that even if $f'$ does not have a section, $f$ always has one with even self-intersection. One can then invoke Theorem~\ref{SpinLF} to obtain another proof of the proposition. 

More generally, the essential information on an algebraic dual of the fiber in Theorem~\ref{SpinLF} would be readily available if the fibration $(X,f)$ admits a section $S$. While this is equivalent to having a lift of the monodromy $t_{c_1} \cdots t_{c_n}=1$ in $\M(\Sigma_g)$ to $t_{c'_1} \cdots t_{c'_n}=1$ in $\M(\Sigma_{g,1}$), finding such a lift ---which might require isotoping the original $\{c_i\}$ so they now have a lot more geometric intersections--- is a challenging job; see e.g. \cite{KorkmazOzbagci, Onaran, Tanaka, Hamada}. Nonetheless, once we have this lift, determining the self-intersection of the section is fairly easy. We have the short exact sequence
\[ 1 \rightarrow \langle t_\delta \rangle \rightarrow \M(\Sigma_g^1) \rightarrow \M(\Sigma_{g,1}) \rightarrow 1 \]
where the epimorphism is induced by capping the boundary component $\delta$ of $\Sigma_g^1$ by a disk with a marked point. So there is always a further lift $t_{c''_1} \cdots t_{c''_n}=t_\delta^k$ in $\M(\Sigma_g^1)$, which one can easily obtain by removing a disk neighborhood of the marked point and then calculating the number of boundary twists needed to get back to identity, by looking at the action of the monodromy on an essential arc on $\Sigma_g^1$. The power $k$ of the boundary-twist $t_\delta$ then determines the negative of the self-intersection number of $S$. This can be easily seen as follows:  The disk neighborhood of the marked point corresponds to a tubular neighborhood of $S$, and by picking a fixed point on the boundary of the disk, we get a  push-off of $S$, so now by the action of $t_{\delta}^k$, the intersection number of the two is $\pm k$. \linebreak Running this calculation in a single example (like the ones below), for once and all we determine the sign, and conclude that the self-intersection of $S$ is $-k$.

\smallskip
We continue with some quintessential examples:

\begin{example} \label{oddchain}
The odd chain relation in $\M(\Sigma_g^2)$, with $c_i$ as in Figure~\ref{fig:spinexamples_oddchain}
\begin{equation} \label{oddchainrelation}
 (t_{c_1} t_{c_2} \cdots t_{c_{2g+1}})^{2g+2}= t_{\delta_1} t_{\delta_2} \, ,
\end{equation}
is the monodromy factorization of a genus--$g$ pencil on a simply-connected K\"{a}hler surface, which is the $\K$ surface when $g=2$ \cite{GompfStipsicz}.  Since the $\Z_2$--homology classes of $c_1, \ldots, c_{2g+1}$ generate $H_1(\Sigma_g^2; \Z_2)$, and the sum of all odd indexed $c_i$ is homologous to each $\delta_j$, we see that there is in fact a unique quadratic form $q\colon H_1(\Sigma_g^2; \Z_2) \to \Z_2$ for which the monodromy curves satisfy the spin condition (corresponding to the unique spin structure on the $4$--manifold) if and only if \mbox{$g$ is even.} 
\end{example}

\begin{example} \label{evenchain}
The even chain relation  in $\M(\Sigma_g^1)$, with $c_i$ as in Figure~\ref{fig:spinexamples_evenchain}
\begin{equation} \label{evenchainrelation}
(t_{c_1} t_{c_2} \cdots t_{c_{2g}})^{4g+2}= t_\delta \, , 
\end{equation}
is the monodromy factorization of a genus--$g$ pencil on a simply-connected K\"{a}hler surface. Since the $\Z_2$--homology classes of the curves $c_1, \ldots, c_{2g}$ generate $H_1(\Sigma_g^1; \Z_2)$, 
there is in fact a unique quadratic form $q\colon H_1(\Sigma_g^1; \Z_2) \to \Z_2$ for which $q(c_i)=1$ for all $i$. To check the remaining condition on the boundary components,  take $c_{2g+1}$ to be any curve extending the $2g$--chain $c_1, \ldots, c_{2g}$ to a $(2g+1)$--chain on $\Sigma_g^1$. Now we see that all the odd indexed $c_i$ cobound a $\Sigma_0^{g+1}$, and together with $\delta$, they also cobound a $\Sigma_0^{g+2}$. So we have $q(c_{2g+1}) \equiv \sum_{i=1}^g q(c_{2i-1})$ (mod $2$), and  $q(\delta)\equiv q(c_{2g+1})+ \sum_{i=1}^g q(c_{2i-1}) \equiv 2 q(c_{2g+1}) \equiv 0$ (mod $2$). Hence, these $4$--manifolds are \emph{never} spin. 
\end{example}

\begin{figure}[htbp]
	\centering
	\subfigure[Odd chain. \label{fig:spinexamples_oddchain}]
	{\includegraphics[height=55pt]{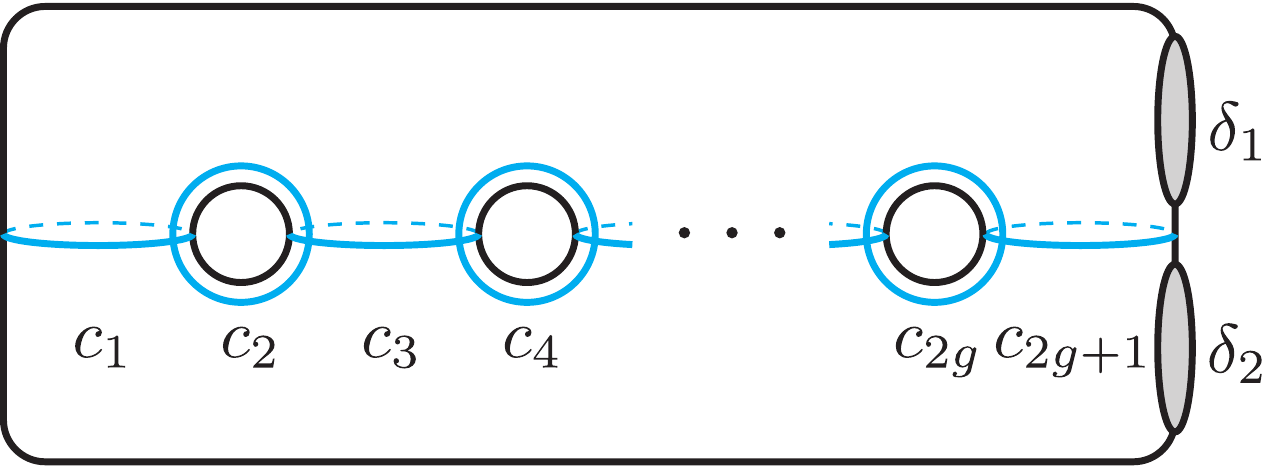}} 
	\hspace{10pt}
	\subfigure[Even chain. \label{fig:spinexamples_evenchain}]
	{\includegraphics[height=55pt]{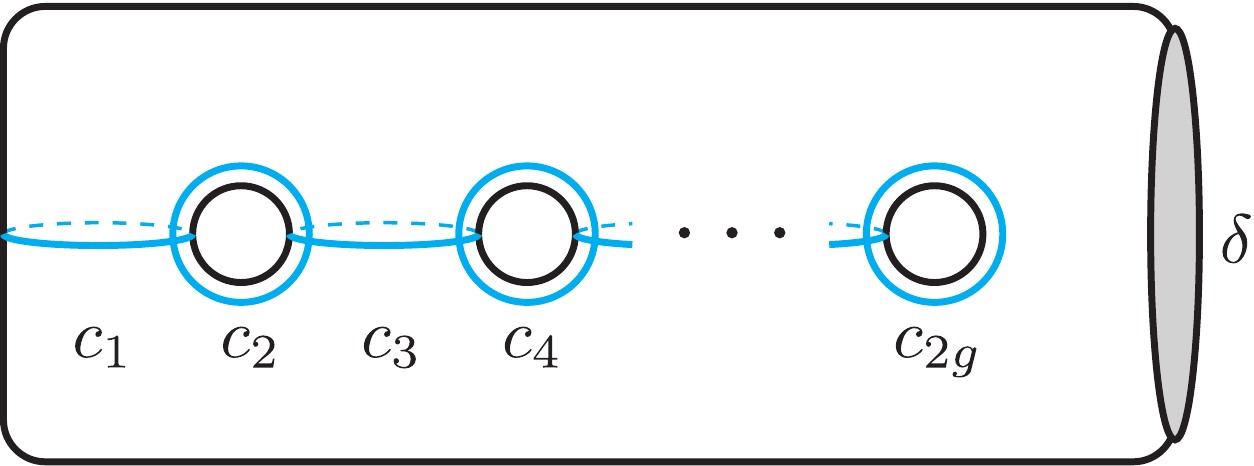}} 
	\subfigure[Hyperelliptic relation. \label{fig:spinexamples_hyperelliptic}]
	{\includegraphics[height=55pt]{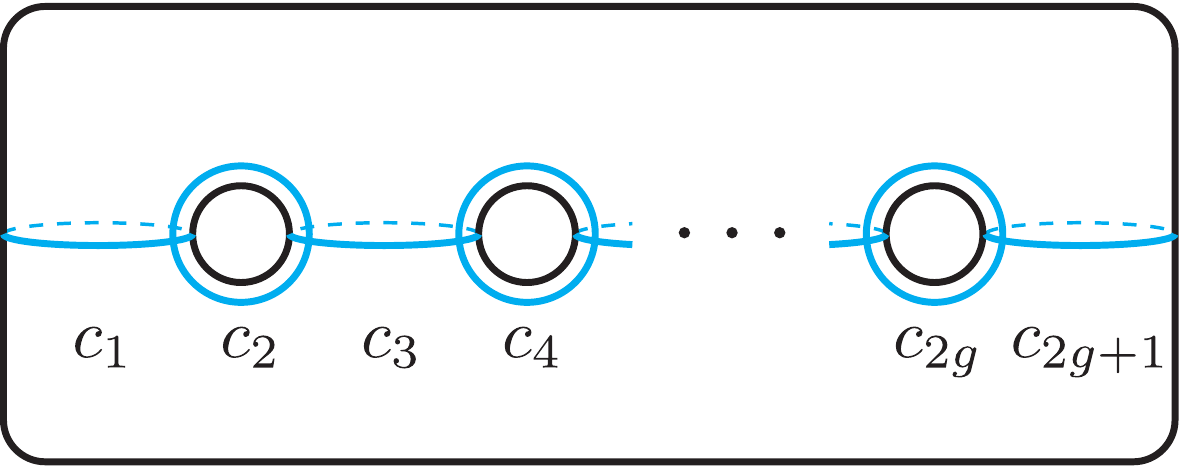}} 
	\caption{Dehn twist curves for standard relations.} 	
	\label{fig:spinexamples}
\end{figure}

\begin{example} \label{hyperelliptic}
The hyperelliptic relation  in $\M(\Sigma_g)$, with $c_i$ as in Figure~\ref{fig:spinexamples_hyperelliptic}
\begin{equation} \label{hyperellipticrelation}
(t_{c_1} t_{c_2} \cdots t_{c_{2g}} t_{c_{2g+1}}^2 t_{c_{2g}} \cdots t_{c_2} t_{c_1} )^2= 1 \, , 
\end{equation}
is the monodromy factorization of a genus--$g$ fibration on the rational surface \mbox{$\CP \# (4g+5) \CPb$,} and the double of this factorization is known to yield a genus--$g$ Lefschetz fibration on the elliptic surface \mbox{$E(g+1)$} \cite{GompfStipsicz}. Once again, the $\Z_2$--homology classes of the curves $c_1, \ldots, c_{2g+1}$ generate $H_1(\Sigma_g; \Z_2)$, and  because the odd indexed $c_i$ all together bound a subsurface, there is a unique quadratic form 
$q\colon H_1(\Sigma_g; \Z_2) \to \Z_2$ for which $q(c_i)=1$ for all $i$, if and only if $g$ is odd. 

Yet, even when $g$ is odd, the total space $\CP \# (4g+5) \CPb$ for the positive factorization~\eqref{hyperellipticrelation} is obviously not spin, despite the fiber class being primitive (which is immediate by the particular case of Proposition~\ref{divisibility} below, or by the fibration admitting sections). Indeed, this fibration arises as a blow-up of a pencil on a Hirzebruch surface, and admits as many as $4g+4$ exceptional sections, each one of which can be taken as the dual $S$. As an instance of Proposition~\ref{doubling} however, when we double the monodromy factorization~\eqref{hyperellipticrelation}, we get a spin Lefschetz fibration, which yields the unique spin structure on $E(g+1)$, for $g$ odd. This is also evident from Theorem~\ref{SpinLF} and the fact that by matching the exceptional sections, we see that the latter fibration has a section $S':=S \# S$ with self-intersection $-2$. Similarly, if we cap off the two boundary components in Example~\ref{oddchain}, we get a positive factorization of the identity, and doubling it, we get a monodromy factorization for a spin Lefschetz fibration, provided $g$ is odd. Note that in both examples, we have the same spin structure on $\Sigma_g$, and the corresponding quadratic form has Arf invariant  $1$ when $g \equiv 1$ (mod 4) and $0$ when $g \equiv 3$ (mod 4). 

On the other hand, if we cap off the boundary component in Example~\ref{evenchain}, the monodromy curves, which are in this case linearly independent in homology, already satisfy the spin condition for \emph{any} genus $g$, and doubling it yields a spin Lefschetz fibration. The quadratic form for this unique spin structure on $\Sigma_g$ now has Arf invariant $1$ when $g \equiv 1$ or $2$ (mod 4) and $0$ when $g \equiv 3$ or $4$ (mod 4). 
\end{example}

\begin{example} \label{yun}
A less known relation in $\M(\Sigma_g)$ discovered by K.-H. Yun \cite{Yun}, which is a simultaneous generalization of the hyperelliptic relation and the Matsumoto-Cadavid-Korkmaz relation \cite{Matsumoto, Korkmaz} (all coming from different involutions), is as follows: 
\begin{equation} \label{yunrelation}
(t_{A_{2n-2}} t_{A_{2n-3}} \cdots t_{A_2} t_{A_1}^2  t_{A_2} \cdots t_{A_{2n-3}} t_{A_{2n-2}} \,  t_{B_0} \cdots t_{B_{2m}} t_{A_{2n-1}} )^2 = 1 \, , 
\end{equation}
where $A_i, B_j$ are as in Figure~\ref{fig:yunrelation}, and $m, n >0$. This is the monodromy factorization of a genus $g=2m+n-1$ fibration on the ruled surface $(\Sigma_m \times S^2) \, \# \, 4n \CPb$, and importantly, a twisted fiber sum of two copies of this fibration is known to yield a genus--$g$ Lefschetz fibration with several $(-2$)--sections, the total space of which is a knot surgered elliptic surface $E(n)_K$, where $K$ is a fibered knot of genus $m$ \cite{FSLF, Yun}. Since $E(n)_K$ are homeomorphic to $E(n)$ \cite{FSKnotSurgery}, in particular we deduce that there is a twisted fiber sum of two copies of this fibration that is simply-connected, and moreover spin with signature $-8n$ when $n$ is even. 

Let $\{\alpha_i, \beta_i\}_{i=1}^{n-1} \cup \{\alpha'_j, \beta'_j\}_{j=1}^{2m}$ be a symplectic basis of $H_1(\Sigma_g; \Z_2)$, represented by the same labeled curves in Figure~\ref{fig:yunrelation}. We calculate the $\Z_2$--homology classes of the vanishing cycles as
\begin{align*}
A_1 &= \beta_1, \\
A_{2i-1} &= \beta_{i-1} + \beta_i \quad (i=2,\ldots,n-1), \\
A_{2n-1} &= \beta_{n-1}, \\
A_{2i} &= \alpha_i \quad (i=1,\ldots,n-1), \\
B_0 &= \alpha_1^\prime + \cdots + \alpha_{2m}^\prime + \beta_{n-1}, \\
B_{2j} &= \alpha_{j+1}^\prime + \cdots + \alpha_{2m-j}^\prime + \beta_j^\prime + \beta_{2m+1-j}^\prime + \beta_{n-1} \quad (j=1,\ldots, m-1), \\
B_{2m} &= \beta_m^\prime + \beta_{m+1}^\prime + \beta_{n-1}, \\
B_{2j-1} &= \alpha_j^\prime + \cdots + \alpha_{2m+1-j}^\prime + \beta_j^\prime + \beta_{2m+1-j}^\prime + \beta_{n-1} \quad (j=1,\ldots,m).
\end{align*}
If $n$ is odd then no quadratic form $q\colon H_1(\Sigma_g; \Z_2) \to \Z_2$ satisfies $q(A_i)=1$ for all odd $i$ since such $A_i$ cobound a subsurface $\Sigma_m^n$.
Suppose that $n$ is even.
Let $q(\alpha_i)=q(\alpha'_j)= q(\beta'_j)=1$ for all $i, j$, and set $q(\beta_{i})=1$ for all odd $1 \leq i \leq n-1$ and $q(\beta_{i})=0$ for all even $2 \leq i \leq n-2$. This defines a quadratic form on $H_1(\Sigma_g; \Z_2)$ for which $q(A_i)=q(B_j)=1$ for all $i, j$, and the Arf invariant of this quadratic form is $0$ when $n \equiv 0$ (mod 4) and $1$ when $n \equiv 2$ (mod 4).

\begin{figure}[htbp]
	\centering
	{\includegraphics[height=160pt]{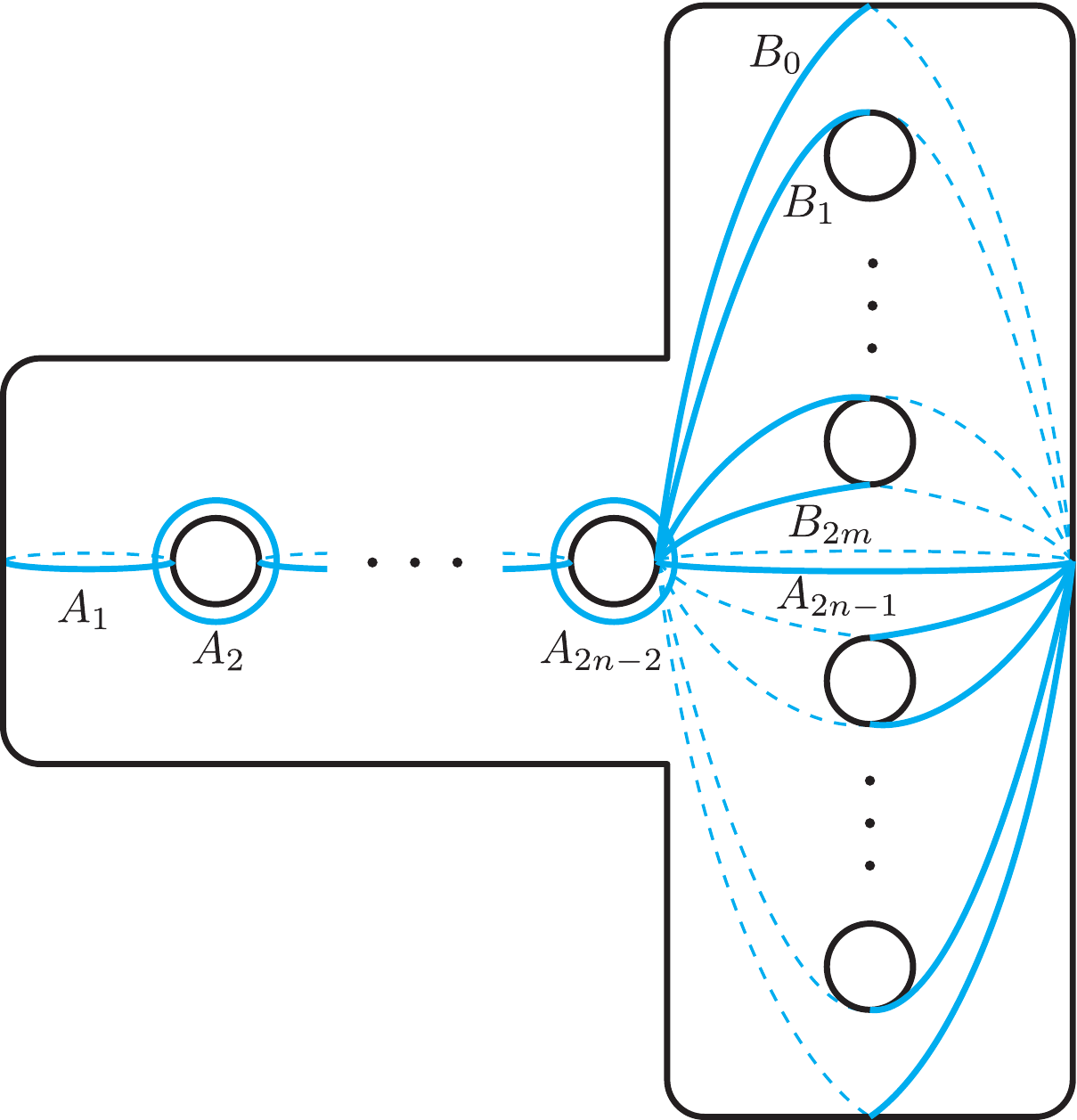}} 
	\hspace{10pt}
	{\includegraphics[height=160pt]{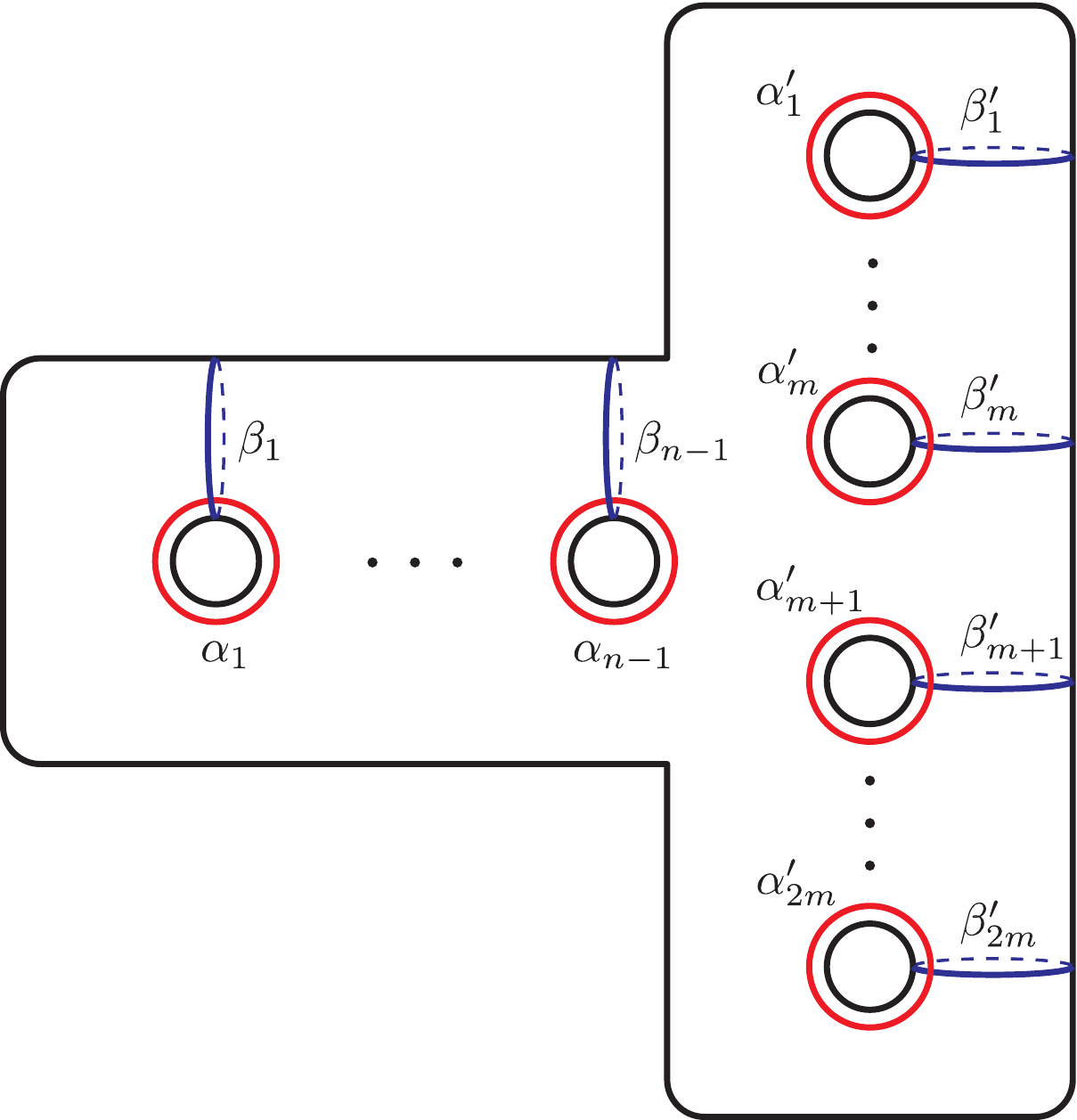}} 
	\caption{Dehn twist curves of Yun's relation and the homology generators.} 	
	\label{fig:yunrelation}
\end{figure}
\end{example}

We will next show, when there is no information available for a section, how, using the monodromy, we can still determine whether the fiber is primitive and thus has an algebraic dual as required in Theorem~\ref{SpinLF}. Since the kernel of the forgetful homomorphism $\M(\Sigma_{g,1}) \to \M(\Sigma_g)$ is generated by point-pushing maps \cite{FarbMargalit}, if we take an arbitrary marked point on $\Sigma_g$ avoiding all the $c_i$, we still get a lift \, $t_{c'_1} \cdots t_{c'_n}= \mathcal{P}_{\vec{\alpha}_1} \cdots \mathcal{P}_{\vec{\alpha}_m}  \, $ in $\M(\Sigma_{g,1})$, where on the right-hand side we now have point-pushing maps along oriented loops $\vec{\alpha}_i$. This product of point-pushing maps can be expressed as one point-pushing map along a possibly immersed oriented loop $\vec{\alpha}$, and when $\alpha$ is null-homotopic, the marked point we picked in fact gives an honest section of the fibration. 

\smallskip
\begin{prop} \label{divisibility}
Let $(X,f)$ be a genus--$g$ Lefschetz fibration, which has a monodromy factorization \mbox{$t_{c_1}\cdots t_{c_n}=1$,} with the following lift in $\M(\Sigma_{g,1})$:
\[ t_{c'_1}\cdots t_{c'_n}= \mathcal{P}_{\vec{\alpha}_1} \cdots \mathcal{P}_{\vec{\alpha}_m}  \, .\]
Let $\gamma_i$ be the homology class of $c_i$ taken with either orientation for $i=1, \ldots, n$, and let $\alpha$ be the sum of the homology classes of all $\vec{\alpha}_j$ in $H_1(\Sigma_g)$. 
The divisibility of the homology class of the regular fiber $F$ in $H_2(X)$ is the smallest positive integer $d$ such that the image of $d \alpha$ under the homomorphism $H_1(\Sigma_g) \to H_1(\Sigma_g)\, / \langle \gamma_1, \ldots, \gamma_n \rangle$ is trivial. In particular, if $\{\gamma_i\}_{i=1}^n$ generate $H_1(\Sigma_g)$, then $[F]$ is primitive.  
\end{prop}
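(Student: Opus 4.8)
The plan is to translate the divisibility of $[F]$ into a computation with the handle chain complex of $X$ induced by the monodromy factorization, and to read off the answer from the degree‑two boundary map. The first reduction is this: since $[F]=[f^{-1}(b)]$ for a regular value $b$, we have $\mathrm{PD}_X[F]=f^*\omega$, where $\omega$ is the positive generator of $H^2(S^2;\Z)$, so $S\cdot F=\langle f^*\omega,S\rangle=f_*S$ for every $S\in H_2(X;\Z)$, reading $f_*S$ as an integer. Because $(X,f)$ is relatively minimal it carries a symplectic form in which the fibers are symplectic, so $[F]$ is non‑torsion, and by unimodularity of the intersection form on $H_2(X)/\mathrm{tors}$ the divisibility of $[F]$ equals $\gcd\{\,S\cdot F : S\in H_2(X;\Z)\,\}$. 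Hence it suffices to prove that $\mathrm{im}\,(f_*\colon H_2(X;\Z)\to H_2(S^2;\Z)\cong\Z)$ is the subgroup $d\,\Z$ for $d$ as in the statement.

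To compute this image I would use the handle decomposition of $X$ read off from the factorization: a $0$--handle, $2g$ $1$--handles carrying $H_1(\Sigma_g;\Z)$, then $n+2$ $2$--handles --- the handle $e_0$ that closes up the fiber, the $n$ Lefschetz $2$--handles $H_i$ attached along the $c_i$, and one section-type $2$--handle $S$ produced by the second copy of $\Sigma_g\times D^2$ that caps the boundary --- then $2g$ $3$--handles and a $4$--handle. In the cellular chain complex one has $\partial_2 e_0=0$ and $\partial_2 H_i=\gamma_i\in H_1(\Sigma_g)$. The key claim is that $\partial_2 S=\alpha$, where $\alpha=\sum_j[\vec{\alpha}_j]$: the attaching circle of $S$ is a section of the mapping torus of $t_{c_1}\cdots t_{c_n}$, and since this word is trivial in $\M(\Sigma_g)$ the mapping torus is identified with $\Sigma_g\times S^1$, with respect to which the attaching circle is the graph of a loop on $\Sigma_g$ of class exactly $\alpha$. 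This is precisely the data encoded by the lift of the factorization to $\M(\Sigma_{g,1})$: a genuine section corresponds to the trivial lift, and in general the point--pushing factor $\mathcal{P}_{\vec{\alpha}_1}\cdots\mathcal{P}_{\vec{\alpha}_m}$ measures the winding of $S$ around the fiber. Pushing the attaching circle into the $1$--skeleton kills its $S^1$--component and leaves $\alpha$ in $C_1=H_1(\Sigma_g)$.

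The proof then finishes quickly. One has $[F]=[e_0]$, and a $2$--chain $z=x_0e_0+\sum_i x_iH_i+kS$ is a cycle precisely when $k\alpha+\sum_i x_i\gamma_i=0$ in $H_1(\Sigma_g)$, i.e. when the image of $k\alpha$ in $H_1(\Sigma_g)/\langle\gamma_1,\dots,\gamma_n\rangle$ vanishes. Since $f$ is constant on the fiber, sends each Lefschetz thimble into a disk, and maps $S$ onto one of the two hemispheres with degree $\pm1$, one has $f_*[z]=\pm k$. Hence $\mathrm{im}\,(f_*)$ is exactly the set of such $k$, namely $d\,\Z$ for the least positive $d$ with $d\alpha\mapsto 0$ in $H_1(\Sigma_g)/\langle\gamma_i\rangle$ --- in particular $d=1$ and $[F]$ is primitive whenever the $\gamma_i$ generate $H_1(\Sigma_g)$ --- and by the first paragraph this $d$ is the divisibility of $[F]$.

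I expect the identification $\partial_2 S=\alpha$ to be the real obstacle: one must pin down exactly which $2$--handle the capping region contributes in the absence of a section, and verify that its attaching circle winds around the fiber by precisely the point--pushing loop dictated by the chosen lift to $\M(\Sigma_{g,1})$. The cleanest route is to compare with the case of an honest section, where the capping $2$--handle is attached along a null--homologous section so that $\partial_2 S=0=\alpha$, and then to propagate the difference through the Birman exact sequence relating $\M(\Sigma_{g,1})$ and $\M(\Sigma_g)$ via point--pushing; everything else is routine homological bookkeeping.
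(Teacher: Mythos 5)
Your strategy is sound and, modulo one step, would give a complete proof; but the step you defer to the last paragraph is precisely where the paper's proof does all of its real work. Concretely, the identification $\partial_2 S\equiv\pm\alpha \pmod{\langle\gamma_1,\ldots,\gamma_n\rangle}$ is not something you can wave at: the attaching circle of the capping $2$--handle is only well defined up to isotopy in $\partial X'$ (equivalently, up to handle slides over the $H_i$ and $e_0$), and a lift to $\M(\Sigma_{g,1})$ is likewise only one of many, so one must exhibit an actual correspondence between a \emph{given} lift $t_{c'_1}\cdots t_{c'_n}=\mathcal{P}_{\vec{\alpha}_1}\cdots\mathcal{P}_{\vec{\alpha}_m}$ and a representative of the attaching circle, and check that the two ambiguities match (handle slides over $H_i$ change the class by $\pm\gamma_i$; changing the lift changes $\alpha$ by elements of $\langle\gamma_i\rangle$). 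The paper does this by explicitly building, from the point--pushing loops, a curve in $S^1\times\Sigma_g$ as a concatenation of arcs over a partition of $S^1$ each projecting to $\vec{\alpha}_j$, and observing that the $S^1\times p$ component dies once the first $D^2\times\Sigma_g$ is filled in. Your proposed shortcut via "a genuine section corresponds to the trivial lift plus the Birman exact sequence" is the right heuristic, but as written it assumes the conclusion in the base case and does not yet control the ambiguities; you would need to carry out essentially the construction above.

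Granting that step, your packaging is genuinely different from the paper's and arguably cleaner in one direction. You compute $\operatorname{im}(f_*\colon H_2(X)\to H_2(S^2))$ from the handle chain complex and identify it with $\gcd\{S\cdot F\}$ via $S\cdot F=\langle f^*\omega,S\rangle$ and unimodularity; this makes the implication "divisibility $d$ $\Rightarrow$ $d\alpha\in\langle\gamma_i\rangle$" automatic, whereas the paper proves it geometrically by taking a dual surface, tubing away excess intersections with $F$, and isotoping it into standard position $D^2\times\{d\ \text{points}\}$ near the fiber so that it splits as $S'\cup(d\ \text{copies of the core of}\ h')$. Conversely, the paper's hands-on construction in the other direction produces the explicit pseudosection that is actually used later (e.g., in Theorem~5 and in the fiber-sum arguments), which your degree computation does not directly supply. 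Two small cautions on your first paragraph: the claim that $[F]$ is non-torsion should not be routed through "relatively minimal $\Rightarrow$ symplectic", since the Gompf--Thurston construction itself requires $[F]\neq 0$ in real homology (for $g\geq 2$ this is standard on other grounds); and "divisibility" should be read modulo torsion, i.e.\ as $\gcd\{S\cdot F\}$, which is in any case the quantity the paper needs.
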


\begin{proof}
We have $X= X' \cup (D^2 \times \Sigma_g)$, where $X'= (D^2 \times \Sigma_g) \cup \, \sum_{i=1}^n h_i$ and $\{h_i\}$ are the Lefschetz \mbox{$2$--handles} attached along $c_i$ on $\Sigma_g$. So $X'$ contains all the $2$--handles of $X$ but one, which comes from the second copy of $D^2 \times \Sigma_g$ in the decomposition of $X$. The isotopy, which takes the product $t_{c_1} \cdots t_{c_n}$ to $1$ in $\mathrm{Diff}^+(\Sigma_g)$, prescribes a fiber--preserving diffeomorphism $\Phi$ from the boundary fibration $f|_{\partial X'}$ to the trivial \mbox{$\Sigma_g$--fibration} on $\partial D^2 \times \Sigma_g$. The attaching circle $c'$ of the last $2$--handle $h'$ is a section $s':=\Phi^{-1}(\partial D^2 \times p)$ of $f|_{\partial X'}$, for some point $p \in \Sigma_g$. 

In the standard handle diagram of $X'$, the attaching circle $c'$ of $h'$ links the \mbox{$2$--handle} of the fiber  geometrically once, and possibly goes over the $1$--handles coming from $\Sigma_g$, and links with the attaching circles $\{c_i\}$ of the Lefschetz $2$--handles $\{h_i\}$. While the section $s'$ of $f|_{\partial X'}$ carries the first information, i.e. how $c'$ would go over the $1$--handles as it traverses around $\partial D^2 \times p$ once, there are still many ways  $h'$ can be attached along a circle $c$ in the complement of the attaching circles $\{c_i\}$ ---where $c$, even if it is not isotopic to $c'$ in the diagram, is still isotopic to $s'$ on ${\partial X'}$. Equivalently, there are several ways to isotope all the attaching circles $\{c_i\}$ in the diagram to curves  that will project on to $\Sigma_g$ still as embedded curves $\{c'_i\}$, while now avoiding a marked point corresponding to $c$. Each one of them prescribes a lift of the identity $t_{c_1} \cdots t_{c_n}=1$ in $\M(\Sigma_g)$ to a factorization $t_{c'_1}\cdots t_{c'_n}= \mathcal{P}_{\vec{\alpha}}$ in $\M(\Sigma_{g,1})$, where  $\vec{\alpha}$ is an oriented curve on $\Sigma_g$. However, $c$ and $c'$ only differ by handle slides over $h_i$, and the homology classes of their attaching circles on $\Sigma_g$ will only differ by relations generated by the homology classes of $\{c_i\}$ in $H_1(\Sigma_g)$. Thus, they will all map to the same homology class under the projection $H_1(\Sigma_g) \to  H_1(\Sigma_g)\, / \langle \gamma_1, \ldots, \gamma_n \rangle$.

Corresponding to a given lift $t_{c'_1}\cdots t_{c'_n}= \mathcal{P}_{\vec{\alpha}_1} \cdots \mathcal{P}_{\vec{\alpha}_m}$ in $\M(\Sigma_{g,1})$ is such a choice $c$ as above. To see this, let $\pi\colon S^1 \times \Sigma_g \to \Sigma_g$ be the projection, and let $t$ parametrize $S^1 = [0, 1] \, / 0\,{\small \sim}\,1$. Take a partition $0=t_0 < t_1 < \ldots < t_i < \ldots <t_m = 1$. For each $\vec{\alpha}_i$, we can find an oriented arc $\beta_i$ in [$t_{i-1}, t_i] \times \Sigma_g$, running from the point $t_{i-1} \times p$ to the point $t_i \times p$, so that the $\pi|_{\beta_i} \colon \beta_i \to \vec{\alpha}_i$ is an immersion. Concatenating all of them, we get an oriented arc $\beta$ in $[0,1] \times \Sigma_g$. Identifying the end points of either $+\beta$ or $-\beta$, we  obtain $c$. Just like our reading of the self-intersection number of a section from the corresponding monodromy factorization, the correct sign of $\beta$ here can be settled for once and all after calculating it in one example, but for our homology calculations to follow, the sign will not matter.

From the decomposition $X= X' \,\cup \, (D^2 \times \Sigma_g)$, it is clear that there is no class with a representative in $X'$ which has non-trivial pairing with the homology class of the regular fiber $F$ in $H_2(X)$.  By our discussion above, up to relations generated by the homology classes $\gamma_i$ of $c_i$ on $H_1(\Sigma_g)$, the attaching circle $c'$ of the last $2$--handle $h'$ is homologous to $c$ we build from a given lift, which in turn is homologous to a concatenation of $S^1 \times p$ and all $\vec{\alpha}_j $s  in  $H_1(S^1 \times \Sigma_g)=H_1(S^1) \times H_1(\Sigma_g)$. Since the $S^1 \times p$ curve already bounds $D^2 \times p$, when we attach $h'$, its attaching circle becomes homologous to $\pm \alpha$ in $H_1(S^1 \times \Sigma_g)$, for $\alpha = \sum_{j=1}^m [\vec{\alpha}_j]$ in $H_1(\Sigma_g)$. It follows that if the image of $\alpha$ is trivial in the quotient $H_1(\Sigma_g)\, / \langle \gamma_1, \ldots, \gamma_n \rangle \cong H_1(X')$, then the attaching circle of $h'$ bounds some surface $S'$ in $X'$. We can then form a closed surface $S$ in $X$ by gluing $S'$ and the core of $h'$ to obtain a surface which intersects the fiber $F$ once,  and conclude that $[F]$ is primitive. 

In general, if the image of $d\,\alpha$ is trivial in $H_1(\Sigma_g)\, / \langle \gamma_1, \ldots, \gamma_n \rangle$, we get a surface $S'$ in $X'$ that bounds $d$ parallel copies of the attaching circle of $h'$. So we can build a closed surface $S$ in $X$, which satisfies $S \cdot F=d$, by gluing $S'$ and $d$ parallel copies of the core of the $2$--handle $h'$. Conversely, if $[F]=d F_0$ for a primitive class $F_0$, let $S$ be a surface in $X$ representing the dual of $F_0$, so $S \cdot F= d$. If necessary, by tubing ---along $F$--- between any pair of positive and negative intersections of $S$ with $F$, we can replace $S$ with a higher genus surface in the same homology class, which now intersects $F$ geometrically $d$ times. We can then isotope this $S$ so that in a small neighborhood $N(F)$ of $F$ identified as $N(F) \cong D^2 \times \Sigma_g$, 
we have $S \cap N(F) \cong D^2 \times d \textrm{ points}$.
Identifying $N(F)$ with $D^2 \times \Sigma_g$ in the decomposition $X = X' \cup \, (D^2 \times \Sigma_g)$, we conclude that any class dual to the primitive root $F_0$ of $[F]$ can in fact be represented by a surface $S$ that splits into a surface $S'$ in $X'$ and $d$ parallel copies of the core of $h'$. Hence, the smallest positive integer $d$, for which  $d\,\alpha$ is trivial in the quotient, gives the divisibility of the fiber class $[F]$ in $H_2(X)$. 
\end{proof}

\clearpage

In the case of pencils, the induced handle decomposition nullifies the need for the additional information on the attaching of the last $2$--handle, and one can determine the divisibility of the fiber class of a genus--$g$ Lefschetz \emph{pencil} with $b$ base points directly using a lift of the monodromy in $\M(\Sigma_g^b)$; see \cite[Appendix]{HamadaHayano}.

We will call a surface $S$ a  \emph{pseudosection} of $(X,f)$ if it intersects a regular fiber $F$ once. As seen from our proof of Proposition~\ref{divisibility}, when the lift of the monodromy factorization to $\M(\Sigma_{g,1})$ involves non-trivial point-pushing maps, one gets a pseudosection (possibly after some handle slides over the Lefschetz $2$--handles), provided the class $\alpha$ determined by the point-pushing curves becomes trivial under the quotient homomorphism $H_1(\Sigma_g) \to H_1(\Sigma_g)\, / \langle \gamma_1, \ldots, \gamma_n \rangle$, and conversely any pseudosection yields such a lift.  However, unlike  in the case of a section, now a further lift to $\M(\Sigma_g^1)$ does not allow us to directly determine the self-intersection number of $S$.\footnote{In principle, one can of course attempt to access all this information by first drawing the standard handle diagram for $(X', f|_{X'})$ , then finding a sequence of Kirby moves from the induced diagram on $\partial X'$ to the standard handle diagram of $S^1 \times \Sigma_g$ with $2g$ $1$--handles and a $2$--handle, and finally dragging back a $0$--framed meridian to the $2$--handle in the latter diagram to a $2$--handle in the former, so as to calculate its framing, and so on. However, this not only proves to be a difficult exercise even for Kirby calculus aficionados, but also departs from our general approach to read off all the essential information from monodromy factorizations.}

We will show that, under favorable conditions that apply to all the examples we will deal with in this article, knowing that the fiber class is primitive will be enough to build the spin structure:

\begin{theorem}  \label{SpinLF2}
Let $(X,f)$ be a genus--$g$ Lefschetz fibration with a monodromy factorization \linebreak $t_{c_1} \cdots t_{c_n}=1$, a primitive fiber class in $H_2(X)$, and signature $\sigma(X)\equiv 0$ (mod 16). \linebreak Assume that there is a quadratic form $q\colon H_1(\Sigma_g;\Z_2) \to \Z_2$ with respect to the $\Z_2$--intersection pairing, such that $q(c_i)=1$ for all $i$, and  $\textrm{Arf}\, (q)=1$. Then \emph{any} spin structure $s'$ corresponding to a quadratic form $q^\prime$ on $H_1(\Sigma_g; \Z_2)$ with $q^\prime(c_i)=1$ for all $i$, induces a spin structure on $X$.
\end{theorem}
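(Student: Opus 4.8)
The plan is to reduce the claim to Stipsicz's characterization (Theorem~\ref{SpinLF}) together with the Arf-invariant classification of spin structures on $\Sigma_g$. First I would observe that since the fiber class $[F]$ is primitive in $H_2(X)$, the long exact sequence of the pair (or the induced handle decomposition, as in the proof of Proposition~\ref{divisibility}) produces an algebraic dual $S \in H_2(X)$ with $F \cdot S = 1$; so the hypothesis of Theorem~\ref{SpinLF} on the existence of an algebraic dual is automatically satisfied. It then remains to verify the two conditions in Theorem~\ref{SpinLF} for the spin structure we want to build: that $S$ can be chosen with even self-intersection, and that there is a quadratic form killing all the $c_i$ --- the latter being part of our hypothesis (indeed we are handed $q$, and more generally any $q'$ as in the statement).

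The heart of the matter is the parity of $S \cdot S$. Here I would use the hypothesis $\sigma(X) \equiv 0 \pmod{16}$. The idea is that if $X$ admits \emph{some} spin structure then, by Rokhlin's theorem, $\sigma(X) \equiv 0 \pmod{16}$ is consistent, but more to the point: I would argue that the existence of the quadratic form $q$ with $q(c_i)=1$ for all $i$ and $\mathrm{Arf}(q)=1$ already forces $X$ to be spin. Concretely, run the construction preceding the statement (the paragraph building $\mathfrak{s}$ on $X' = (D^2\times\Sigma_g)\cup W$ from $s \in \mathrm{Spin}(\Sigma_g)$): the obstruction to extending $\mathfrak{s}''$ across the final $D^2 \times \Sigma_g$ is precisely whether the monodromy-induced identification $\partial X' \to S^1 \times \Sigma_g$ carries $\mathfrak{s}'$ to the product of the bounding spin structure on $S^1$ with $s$. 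Since all $c_i$ satisfy $q(c_i)=1$, each Lefschetz handle attachment lies in $\M(\Sigma_g, s)$ and hence, by the Radosevich/Stipsicz fibered-cobordism statement quoted in the excerpt, preserves the spin structure on the fibers; the only possible discrepancy is a flip of the $S^1$-factor spin structure, and I would show this flip is governed by $\mathrm{Arf}(q)$ together with $\sigma(X) \bmod 16$ --- this is the point where $\sigma \equiv 0 \pmod{16}$ enters, pinning down that no flip occurs. Thus $X$ is spin; equivalently $S$ has even self-intersection (and this does not depend on the choice of dual $S$, since two duals differ by a class orthogonal to $[F]$, and changing $S$ by such a class changes $S\cdot S$ by an even number once $[F]$ is characteristic-compatible --- spelled out, $S\cdot S \bmod 2$ is an invariant of the primitive class $[F]$).

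Once we know $S \cdot S$ is even, Theorem~\ref{SpinLF} applies to \emph{any} quadratic form $q'$ with $q'(c_i)=1$ for all $i$: the even-self-intersection condition is already established (it is a property of $X$ and $[F]$, not of $q'$), and the vanishing-cycle condition is exactly $q'(c_i)=1$. Hence each such $q'$ corresponds to a spin structure $s'$ on $\Sigma_g$ that, via the construction in the paragraph before the theorem, induces a spin structure on $X$. I would close by remarking that the Arf invariant of $q'$ need not equal that of $q$; the role of the hypothesis $\mathrm{Arf}(q)=1$ is solely to guarantee $X$ is spin in the first place (via the $\sigma \equiv 0 \bmod 16$ argument), after which \emph{all} admissible $q'$ work, regardless of their Arf invariant.

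**Main obstacle.** The delicate step is the second paragraph: making precise exactly how the flip of the $S^1$-factor spin structure under the cobordism $W$ is detected, and why $\mathrm{Arf}(q)=1$ combined with $16 \mid \sigma(X)$ rules it out. The naive worry is that $W$ could reverse the bounding spin structure on $S^1$ (as in the parenthetical remark in the proof of Proposition~\ref{doubling}), which would obstruct extension over the last handle; the resolution must tie this $\Z_2$ ambiguity to a congruence on $\sigma(X)$ via Rokhlin-type reasoning on the closed-up manifold, and that bookkeeping --- rather than any single hard computation --- is where the care is needed.
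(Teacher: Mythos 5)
Your reduction to Theorem~\ref{SpinLF} is the right frame, and you have correctly isolated the crux: the parity of $S \cdot S$ for the (pseudo)section $S$ dual to the primitive fiber class, equivalently whether the fibered cobordism $W$ flips the spin structure on the $S^1$ factor. But at exactly that point the proposal stops being a proof: ``I would show this flip is governed by $\mathrm{Arf}(q)$ together with $\sigma(X) \bmod 16$'' is an IOU for the entire content of the theorem, and no mechanism is offered for cashing it. The paper's mechanism is a comparison device that is absent from your sketch: for every genus $g \geq 1$ it constructs an explicit \emph{achiral} model Lefschetz fibration $(Y_g, f_g)$ (out of $2$--, $3$-- and $4$--chain relations) with $\sigma(Y_g) \equiv 8 \pmod{16}$, a section $S_g$ of \emph{odd} self-intersection, and an admissible quadratic form $q_g$ with $\mathrm{Arf}(q_g) = 1$. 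Since $\mathrm{Arf}(q) = \mathrm{Arf}(q_g)$, one can fiber sum $(X,f)$ with $(Y_g,f_g)$ by a spin diffeomorphism matching $S$ to $S_g$; if $S \cdot S$ were odd, the sum would have a dual of even square and all vanishing cycles killed by a single quadratic form, hence would be spin by Theorem~\ref{SpinLF} --- contradicting Rokhlin, since its signature is $\equiv 8 \pmod{16}$. That forces $S \cdot S$ even, after which Theorem~\ref{SpinLF} applies to every admissible $q'$, as you say. Constructing and verifying the models $(Y_g, f_g)$ is the bulk of the actual proof.

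Your instinct that ``Rokhlin-type reasoning on the closed-up manifold'' should pin down the flip is sound and can in fact be made precise: the flip is detected by the Rokhlin $\mu$--invariant of $\partial X' \cong S^1 \times \Sigma_g$, which equals $\sigma(X) \bmod 16$ for the spin filling $X'$, versus $0$ for the bounding product structure and $8\,\mathrm{Arf}(q)$ for the Lie-times-$s$ structure. But note that establishing $\mu(S^1 \times \Sigma_g, \mathrm{Lie} \times s) \equiv 8\,\mathrm{Arf}(s)$ itself requires exhibiting explicit spin fillings of signature $\equiv 8 \pmod{16}$ in every genus --- which is precisely what the models $(Y_g, f_g)$ are. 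So either route demands the construction you have deferred; without it the ``bookkeeping'' cannot be completed. (Two smaller cautions: your parenthetical that the parity of $S\cdot S$ is independent of the choice of dual needs an argument, since \emph{a priori} two duals differ by a class of possibly odd square before $X$ is known to be spin; and the dependence on $\mathrm{Arf}$ is genuinely one-directional --- the remark following Theorem~\ref{SpinLF2} exhibits spin fibrations with $16 \mid \sigma$ whose only admissible forms have $\mathrm{Arf} = 0$, so no clean formula in $\mathrm{Arf}(q)$ and $\sigma \bmod 16$ alone decides the flip in general.)
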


\begin{proof}
Let us first assume that there exists a genus--$g$ \emph{achiral} Lefschetz fibration $(Y_g, f_g)$, with a monodromy factorization $t_{d_1}^{\epsilon_1} \cdots t_{d_m}^{\epsilon_m}=1$ in $\M(\Sigma_g)$ (for some $\epsilon_i = \pm 1$),  which satisfies the following four properties: 
\begin{enumerate}[label=(\roman*)]
\item $\sigma(Y_g) \equiv 8$ (mod $16$), 
\item $f_g$ admits a section $S_g$ of (possibly positive) odd self-intersection,
\item there is an $s_g \in \mathrm{Spin}(\Sigma_g)$, with a quadratic form $q_g$ such that $q_g(d_i)=1$ for all $i$,
\item $\textrm{Arf}(q_g)=1$. 
\end{enumerate}
We will show in a bit that there are such model fibrations for every genus $g \geq 1$.

Since $(X,f)$ has a primitive fiber class, by our discussion above, it has a pseudosection $S$  intersecting a regular fiber $F$ geometrically once. Let $s \in \mathrm{Spin}(\Sigma_g)$ correspond to the quadratic form $q$ in the hypothesis. 

Removing a small neighborhood $N(F)$ of $F$, where $S$ intersects $F$ once, and $N(F_g)$ of the regular fiber $F_g$ of $(Y_g, f_g)$, we can take a fiber sum $(\hat{X}, \hat{f})=(X,f) \# (Y_g, f_g)$, so that under the gluing $\partial (X \setminus N(F))  \to \partial(Y \setminus N(F_g))$ we have  $S  \cap  \, \partial N(F)$ sent to  $S_g \cap \, \partial N(F_g)$, which we can always achieve after an isotopy for any given fiber-preserving diffeomorphism. So this fibration has a pseudosection $\hat{S}=(S \setminus S \cap N(F)) \cup (S_g \setminus S_g \cap N(F_g)) = S \# S_g$, with self-intersection $\hat{S} \cdot \hat{S} = S \cdot S + S_g \cdot S_g \equiv  S \cdot S + 1$ (mod $2$), as we assumed $S_g$ has odd self-intersection.

Identifying the boundaries $\partial N(F)$ and $\partial N(F_g)$ with $S^1 \times \Sigma_g$, the gluing diffeomorphism is prescribed by a self-diffeomorphism $\Phi$ of $S^1 \times \Sigma_g$, which is the product  of the complex conjugation of the unit circle $S^1 \subset \C$, and some $\phi \in \textrm{Diff}^+(\Sigma_g)$. Since both spin structures $s$ and $s_g$ have the same Arf invariant, we can choose $\phi$ to be a spin diffeomorphism $\phi\colon (\Sigma_g, s) \to (\Sigma_g, s_g)$. Now, the fibration $(\hat{X}, \hat{f})$ has monodromy curves $\{c_i, \phi^{-1}(d_j)\}$, with $q(c_i)=1$ and $q(\phi^{-1}(d_j))=q_g(d_j)=1$ for all $i, j$. If the pseudosection $S$ of $(X,f)$ had odd self-intersection, then $\hat{S}$ would have even self-intersection, and by Theorem~\ref{SpinLF}, $\hat{X}$ would be spin. However, by Novikov additivity, the signature $\sigma(\hat{X}) = \sigma(X) + \sigma(Y_g) \equiv 8$ (mod $16$), and Rokhlin's theorem implies that $\hat{X}$ cannot be spin. So we deduce that $S$ has even self-intersection, and thus we can invoke Theorem~\ref{SpinLF} to conclude that our original Lefschetz fibration $(X,f)$ is spin, in fact not only for $q$, but for any quadratic form $q^\prime$ with $q^\prime(c_i)=1$ for all $i$. 

We are left with building the models $(Y_g, f_g)$. Note that by Proposition~\ref{doubling}, any achiral Lefschetz fibration satisfying (iii) should necessarily have signature $\sigma(Y_g) \equiv 0$ (mod 8). Some of these models can be derived immediately from Examples~\ref{oddchain}--\ref{hyperelliptic}. When we cap off the boundary components, all these relations yield hyperelliptic Lefschetz fibrations, so using Endo's signature formula  for hyperelliptic fibrations \cite{Endo}, we can easily see that the even-chain relation yields Lefschetz fibrations with the desired properties when $g \equiv 2$ (mod 4), whereas all three relations yield such examples when $g \equiv 1$ (mod 4). 

To get models for all $g$ however, we will go about it a little differently, and build achiral fibrations out of the smallest chain relations instead ---while remembering that everything we have discussed so far also applies to achiral fibrations. Below, let $\{\alpha_i, \beta_i\}$ be a symplectic basis for $H_1(\Sigma_g; \Z_2)$ generated by the same labeled curves given in Figure~\ref{fig:symplecticgenerators}. 

\begin{figure}[htbp]
	\centering
	\includegraphics[height=60pt]{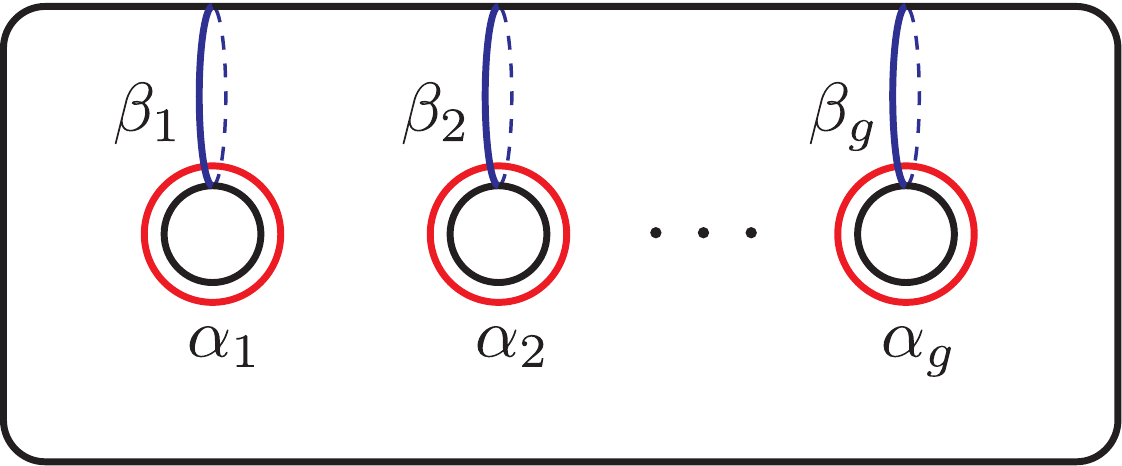}
	\caption{The symplectic basis for $H_1(\Sigma_g; \Z_2)$.} 
	\label{fig:symplecticgenerators}
\end{figure}

\noindent \underline{$g=1$}: Capped off even chain, odd chain and hyperelliptic relations all give Hurwitz equivalent positive factorizations, the monodromy curves of which evaluate as $1$ under the quadratic form prescribed by $q_1(\alpha_1)=q_1(\beta_1)=1$. Its signature is $-8$. 

\noindent \underline{$g=2$}: Capped off even chain relation gives a positive factorization, the monodromy curves of which evaluate as $1$ under the quadratic form prescribed by $q_2(\alpha_1)=q_2(\beta_1)=q_2(\alpha_2)=1$ and $q_2(\beta_2)=0$. Its signature is $-24$.

\noindent \underline{\textit{Every odd $g \geq 3$}}: We have a decomposition $\Sigma_g = \Sigma_1^1 \cup \Sigma_1^2 \cup \ldots \cup \Sigma_1^2 \cup \Sigma_1^1$, with $2k-1$ copies of $\Sigma_1^2$ for $g=2k+1$. Let $A$ denote the $2$--chain relation $(t_{c_1}t_{c_2})^6 t_{\delta}^{-1}$ and $B$ denote the $3$--chain relation $(t_{c_1}t_{c_2}t_{c_3})^4 t_{\delta_1}^{-1} t_{\delta_2}^{-1}$. Embed $A$ in both copies of $\Sigma_1^1$, and embed $B^{-1}$ into the first copy of $\Sigma_1^2$, $B$ into the second copy, and keep alternating the embeddings in the same fashion for all the copies of $\Sigma_1^2$. The embeddings of the boundary twists $t_{\delta}, t_{\delta_1}, t_{\delta_2}$ all cancel out with each other. So we get an achiral fibration $(Y_g, f_g)$, for $g=2k+1$, with only non-separating monodromy curves as in Figure~\ref{fig:achiralmodels}(a). Take the quadratic form $q$ which maps $q_g(\alpha_i)=q_g(\beta_i)=1$ for all $i$, so $q$ maps all the monodromy curves  to $1$. By \cite{EndoNagami}, taking the algebraic sum of the signatures of the relations $\sigma(A)=-7$ and  $\sigma(B)=-6$, we calculate that $\sigma(Y_g)= 2(-7)+ k(6) +(k-1)(-6)= -8$. 

\smallskip
\noindent \underline{\textit{Every even $g \geq 4$}}: This time we have a decomposition $\Sigma_g = \Sigma_1^1 \cup \Sigma_1^2 \cup \ldots \cup \Sigma_1^2 \cup \Sigma_2^1$, with $2k-1$ copies of $\Sigma_1^2$ for $g=2k+2$. Let $A$ and $B$ denote the $2$--chain and $3$--chain relations as above, and in addition, let $C$ denote the $4$--chain relation $(t_{c_1}t_{c_2}t_{c_3}t_{c_4})^{10} \, t_{\delta}^{-1}$. Embed $A$ into $\Sigma_1^1$, then $B^{-1}$ and $B$ into the copies of $\Sigma_1^2$ in the same alternating fashion as above. This time we finish with embedding $C$ into the last $\Sigma_2^1$ piece. Once again, the embeddings of the boundary twists $t_{\delta}, t_{\delta_1}, t_{\delta_2}$ all cancel out with each other, and we  get an achiral fibration $(Y_g, f_g)$, for $g=2k+2$, with non-separating monodromy curves as in Figure~\ref{fig:achiralmodels}(b). Take the quadratic form $q_g$ which maps all $q_g(\alpha_i)=q_g(\beta_j)=1$ except for $q_g(\beta_{g-1})=0$. One can see that all the monodromy curves are mapped to $1$ under $q_g$. Using \cite{EndoNagami} again, by taking the algebraic sum of the signatures of the relations $\sigma(A)=-7$, $\sigma(B)=-6$ and $\sigma(C)=-23$, we calculate that $\sigma(Y_g)= (-7)+ k(6) +(k-1)(-6) +(-23)= -24$. 

It is easy to check that the quadratic forms we described for each fibration $(Y_g, f_g)$ has Arf invariant $1$. (In fact there is a unique quadratic form for each one, since the monodromy curves of each $f_g$ generate $H_1(\Sigma_g; \Z_2)$.) Finally, note that every $(Y_g, f_g)$ has exceptional sections: This is obvious for $g=1,2$, and the higher genera ones always have an exceptional section supported in the $\Sigma_1^1$ piece (for instance since the $2$--chain is obtained from the $3$--chain by capping off a boundary component which has a single Dehn twist). So each $(Y_g, f_g)$ has a section $S_g$ of odd self-intersection (in fact a lot of them).
\end{proof}

\begin{figure}[htbp]
	\centering
	\subfigure[Odd genus model.] 
	{\includegraphics[height=70pt]{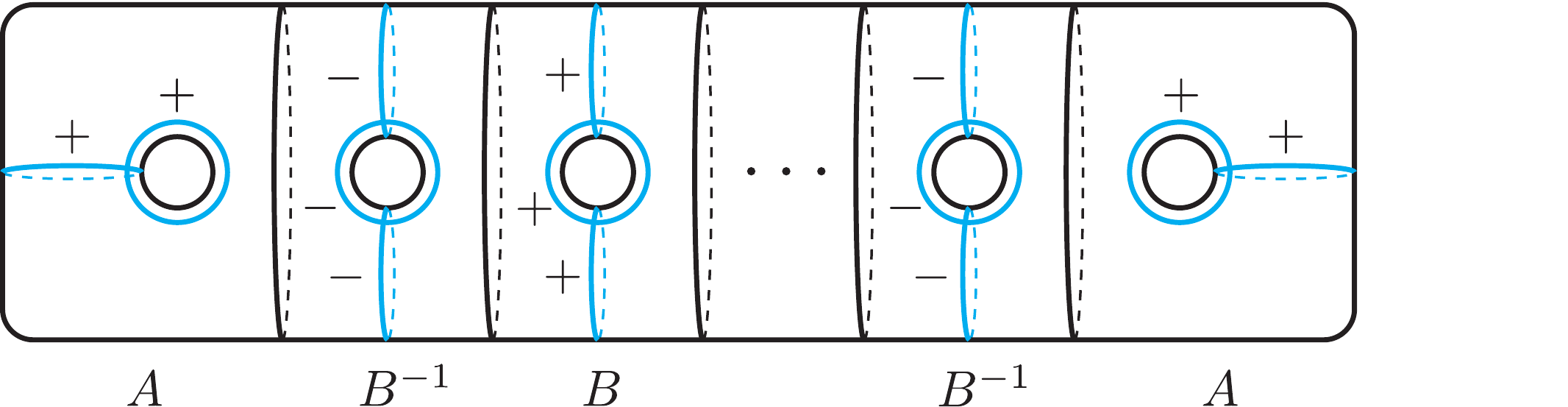}} 
	\hspace{10pt}
	\subfigure[Even genus model.] 
	{\includegraphics[height=70pt]{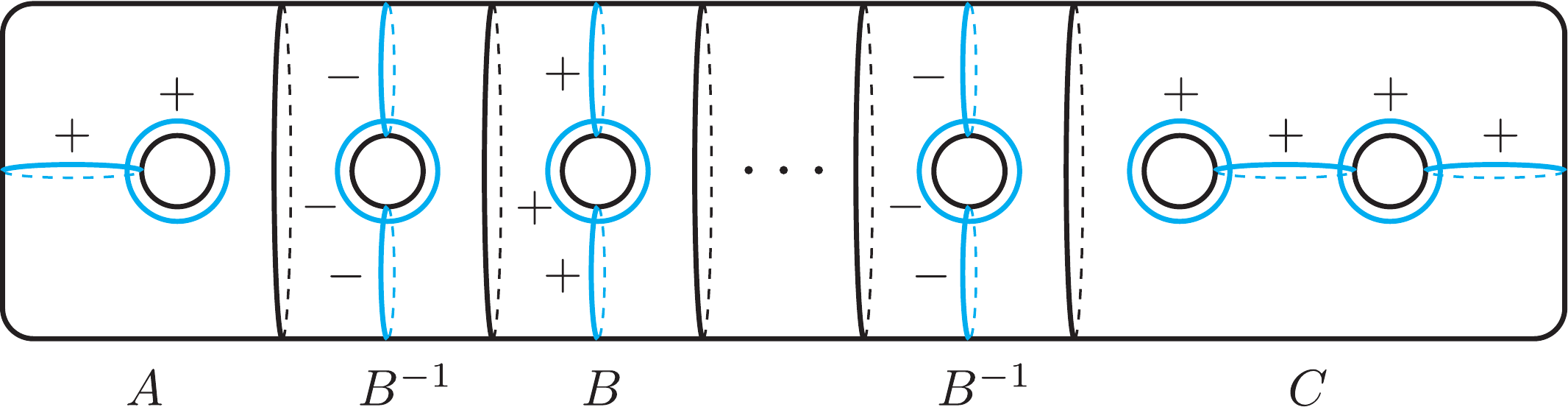}} 
	\caption{Model spin achiral Lefschetz fibrations.} 		
	\label{fig:achiralmodels}
\end{figure}

\smallskip
\begin{remark}
The converse to the statement of Theorem~\ref{SpinLF2} is not true. For instance, when we double the capped off even-chain relation on $\Sigma_g$ with $g \equiv 0$ or $3$ (mod 4), we get a spin Lefschetz fibration with signature divisible by $16$, but the only possible quadratic form on $\Sigma_g$, under which all the monodromy curves are mapped to $1$, has Arf invariant $0$. On the other hand, when $\textrm{rank}(H^1(X; \Z_2))>0$, one often finds spin structures on $(X,f)$ coming from quadratic forms on the fiber $\Sigma_g$ with either Arf invariant, which will be the case for our key example given in Theorem~\ref{KeyLF}. (While this is often the case, it is not always true either; the double of the genus $g=2m+1$ Matsumoto-Cadavid-Korkmaz fibration on $(\Sigma_m \times S^2) \, \#8 \CPb$ has $2^{2m}$ distinct spin structures, but a calculation similar to ours in Example~\ref{yun} shows that \emph{all} of these spin structures come from a quadratic form on $\Sigma_g$ with Arf invariant $1$!)
\end{remark}

\begin{remark}
While the achiral Lefschetz fibrations $(Y_g, f_g)$ we built in the proof of Theorem~\ref{SpinLF2} might be of some interest, the $4$--manifolds $Y_g$ themselves, as well as their spin doubles can all be seen to decompose into a connected sum of standard simply-connected $4$--manifolds $S^2 \times S^2$, $\CP$ and $\K$--surface, taken with either orientations. To see this, first observe that they are all fiber sums of $4$--manifolds that are such connected sums themselves, along embedded spheres of opposite self-intersections (coming from canceling boundary twists), then apply classical cobordism arguments due to Mandelbaum and Moishezon, as in \cite{MandelbaumMoishezon, GompfElliptic, BaykurKnotSurgery}. In particular, none of them are symplectic when $g>2$. 
\end{remark}

\medskip
\section{Lefschetz fibrations with signature zero}  

We will construct our examples of signature zero Lefschetz fibrations in three steps, split into the next three subsections herein, where we employ the \mbox{breeding} technique \cite{BaykurGenus3} in increasing complexity, to build signature zero Lefschetz pencils and fibrations out of lower genera pencils. This is done through careful embeddings of the corresponding positive factorizations into the mapping class group of higher genera surfaces, so that one can cancel all the negative Dehn twists against positive ones. Our $3$-step construction will result in the signature zero genus--$9$ Lefschetz fibration $(X,f)$ of Theorem~\ref{KeyLF}.

The interested reader can see how the key signature zero example $(X,f)$ comes to life, without getting bogged down in more technical details. Here is the outline of our construction: We first build the relation~\eqref{eq:MatsumotoLP_IIA_MarkedPoint} in $\M(\Sigma_2^4)$ for a genus--$2$ pencil, using the $2$--chain and several lantern relations. We embed two copies of this particular relation into $\M(\Sigma_3^4)$, as shown in Figure~\ref{fig:Genus3Breeding}, in order to obtain a new relation for a genus--$3$ pencil. Importantly, this relation \eqref{eq:Genus3LP_4BoundingPairs} contains positive Dehn twists along four bounding pairs given in Figure~\ref{fig:Genus3LP_4BoundingPairs}, each one of which cobound two copies of $\Sigma_1^4$ with a pair of negative Dehn twist curves corresponding to the base points of the pencil. We then embed two copies of this special relation into $\M(\Sigma_9)$ as in Figure~\ref{fig:Genus9_Genus3Embedding1and2}. At this point we have four pairs of negative Dehn twists, which we will cancel one pair at a time, by carefully embedding four more copies of the same genus--$3$ relation, so that each time pairs of negative boundary twists match and cancel with positive bounding pair twists of the first two original  genus--$3$ relations, while a positive bounding pair  matches and cancels with a negative pair. Here one simply needs to see how the positive bounding pairs from the top and the bottom halves of $\Sigma_9$ in Figure~\ref{fig:Genus9_Genus3Embedding1and2} cobound a $\Sigma_3^4$, split into two copies of $\Sigma_1^4$, cobounded by these positive pairs and a negative pair from the first two embeddings of the genus--$3$ relation. 
Following the ingenious work of Endo and Nagami \cite{EndoNagami}, which allows one to calculate the signature of a Lefschetz fibration via elementary relations in the mapping class group, a simple algebraic count of the total number of $2$--chain and lantern relations we employed (as cancellations and braid relations do not affect the signature) confirms that the genus--$2$ and genus--$3$ pencils, and the genus--$9$ Lefschetz fibration we built at the end, all have signature zero.

During our $3$--step construction we will also chase around an additional marked point on the fiber, the information on which we will need only later when calculating the divisibility of the fiber class of $(X,f)$. The reader who is not interested in this particular calculation can safely ignore the additional point-pushing maps that appear in our monodromy factorizations. In fact, while the breeding technique plays an innovative role in the construction of $(X,f)$, because we present this fibration with an explicit positive factorization, the more conservative reader may choose to skip the next three subsections and verify its monodromy given in Theorem~\ref{KeyLF} in a straightforward fashion, using the Alexander method.  (This is a tedious but still manageable calculation, as we have observed so while doing our due diligence to test our monodromy using the same method.)

\smallskip
\subsection{A signature zero genus--$2$  pencil}  \label{SecGenus2} \

We begin with describing a genus--$2$ Lefschetz pencil, whose topology and monodromy both have special features we need for the later steps of our construction. Namely, we would like the total space to have signature zero and be spin, and the monodromy curves to contain a bounding pair and two disjoint separating curves. (The need for all these properties will become clear as we move on to next steps.) 

In order to make our construction as self-contained as possible, we will derive  all our relations from elementary relations that are known to generate the mapping class group, namely, the \mbox{$2$--chain} relation, the lantern relation, and the braid relation, along with commutativity, cancellation and conjugation. In the computations we will freely perform Hurwitz moves and cyclic permutations without stating explicitly.

\smallskip
\noindent \textit{\underline{A variation of the $6$--holed torus relation}}:
We first present a variation of what is known as the $6$--holed torus relation. The curves involved in our construction to follow are given in Figure~\ref{fig:6HoledTorus_Construction}.

\begin{figure}[htbp]
	\centering
	\subfigure[The $2$-chain relation and lantern relations. \label{fig:6HoledTorus_Construction}]
	{\includegraphics[height=120pt]{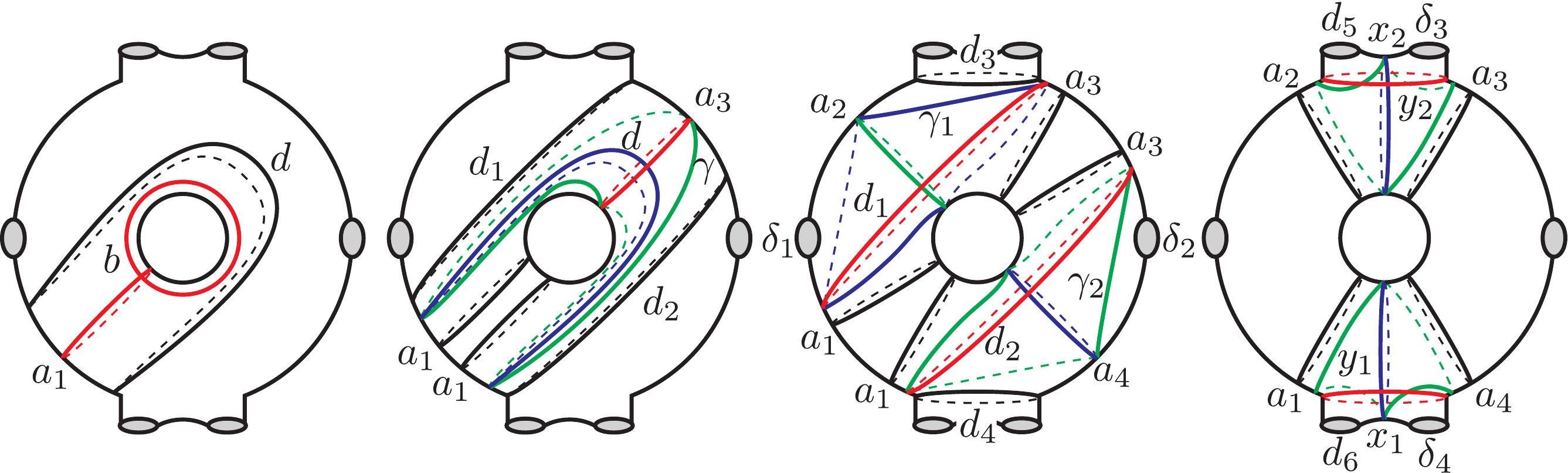}} 
	\hspace{10pt}
	\subfigure[Rearrangement of the boundary components. \label{fig:6HoledTorus_Rearrangement}]
	{\includegraphics[height=100pt]{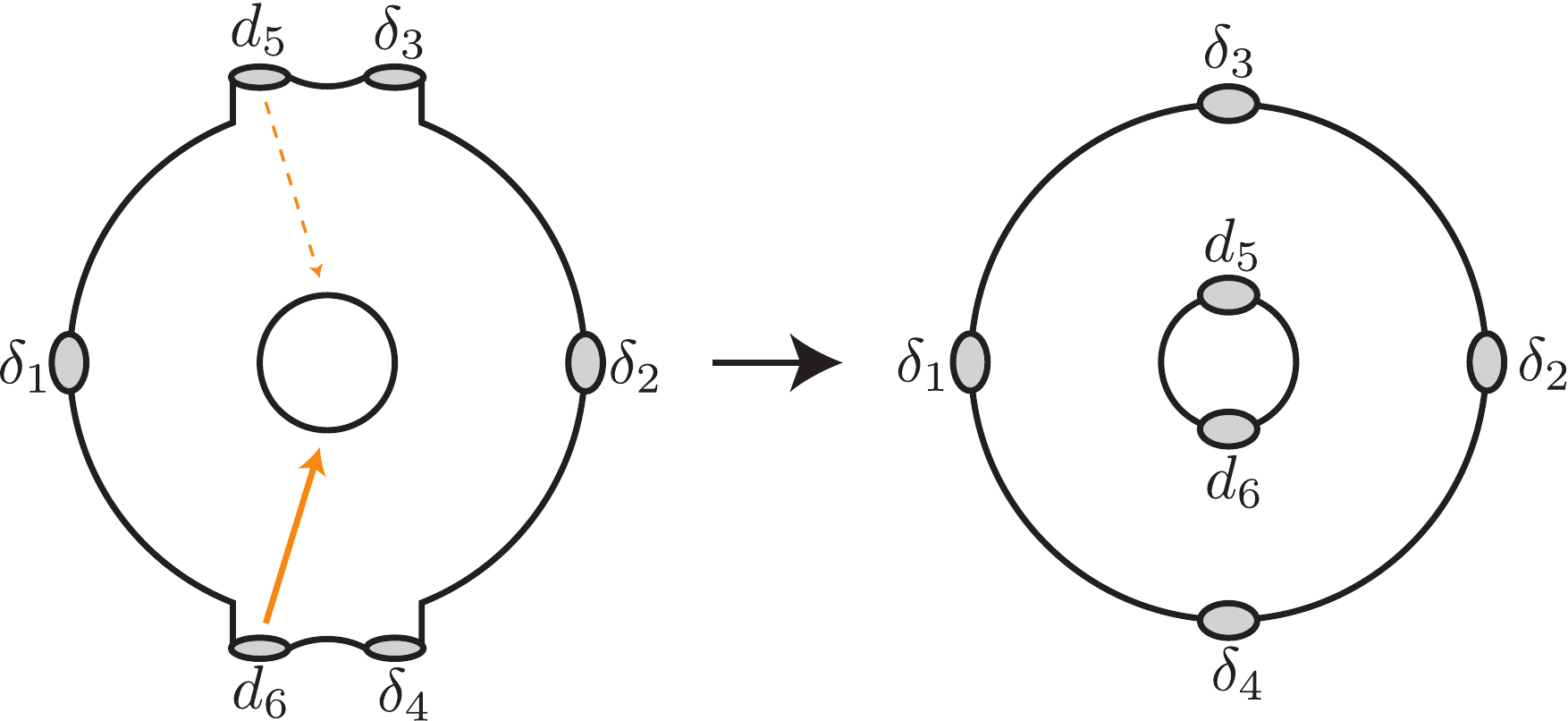}}  
	\caption{Construction of a $6$--holed torus relation.} 
	\label{fig:ConstructionOf6holedtorusrelation}
\end{figure}

We begin with the following $2$--chain relation and lantern relation:
\begin{align*}
(\DT{a_1} \DT{b})^6 &= \DT{d}, \\
\DT{a_3} \DT{d} \DT{\gamma} &= \DT{a_1} \DT{a_1} \DT{d_1} \DT{d_2}.
\end{align*}
We combine them as follows. (Here we underline the parts that we modify in that very step.) 
\begin{align*}
1 &= \DT{a_1} \DT{b} \DT{a_1} \DT{b} \DT{a_1} \DT{b} \DT{a_1} \DT{b} \DT{a_1} \DT{b} \DT{a_1} \DT{b} \LDT{d} \cdot \DT{d} \DT{\gamma} \DT{a_3} \LDT{a_1} \LDT{a_1} \LDT{d_1} \LDT{d_2} \\
&= \DT{a_1} \DT{b} \DT{a_1} \DT{b} \DT{a_1} \DT{b} \DT{a_1} \DT{b} \LDT{d} \cdot \DT{d} \DT{\gamma} \DT{a_3} \LDT{a_1} \LDT{a_1} \LDT{d_1} \LDT{d_2} \cdot \DT{a_1} \DT{b} \DT{a_1} \DT{b} \\
&= \DT{a_1} \DT{b} \DT{a_1} \DT{b} \DT{a_1} \DT{b} \DT{a_1} \DT{b} \LDT{d} \cdot \DT{d} \DT{\gamma} \DT{a_3} \LDT{a_1} \LDT{a_1} \cdot \DT{a_1} \DT{b} \DT{a_1} \DT{b} \cdot \LDT{d_1} \LDT{d_2} \\
&= \DT{a_1} \DT{b} \DT{a_1} \DT{b} \DT{a_1} \DT{b} \DT{a_1} \DT{b} \DT{\gamma} \DT{a_3} \LDT{a_1} \underline{\DT{b} \DT{a_1} \DT{b}} \cdot \LDT{d_1} \LDT{d_2} \\
&= \DT{a_1} \DT{b} \DT{a_1} \DT{b} \DT{a_1} \DT{b} \DT{a_1} \DT{b} \DT{\gamma} \DT{a_3} \LDT{a_1} \DT{a_1} \DT{b} \DT{a_1} \cdot \LDT{d_1} \LDT{d_2} \\
&= \DT{a_1} \DT{b} \DT{a_1} \underline{\DT{b} \DT{a_1} \DT{b} \DT{a_1} \DT{b} \DT{\gamma}} \DT{a_3} \DT{b} \DT{a_1} \cdot \LDT{d_1} \LDT{d_2} \\
&= \DT{a_1} \DT{b} \DT{a_1} \DT{a_3} \DT{b} \DT{a_1} \DT{b} \DT{a_1} \DT{b} \DT{a_3} \DT{b} \DT{a_1} \cdot \LDT{d_1} \LDT{d_2} 
\intertext{(Here we used the simple observation that $\DT{b}\DT{a_1}\DT{b}\DT{a_1}\DT{b}(\gamma)=a_3$.)}
&= \DT{a_1} \DT{b} \DT{a_1} \DT{a_3} \DT{b} \DT{a_1} \DT{b} \DT{a_1} \underline{\DT{b} \DT{a_3} \DT{b}} \DT{a_1} \cdot \LDT{d_1} \LDT{d_2} \\
&= \DT{a_1} \DT{b} \DT{a_1} \DT{a_3} \DT{b} \DT{a_1} \DT{b} \DT{a_1} \DT{a_3} \DT{b} \DT{a_3} \DT{a_1} \cdot \LDT{d_1} \LDT{d_2} \\
&= \DT{a_1} \DT{b} \DT{a_1} \DT{a_3} \DT{b} \DT{a_1} \DT{b} \DT{a_3} \underline{\DT{a_1} \DT{b} \DT{a_1}} \DT{a_3} \cdot \LDT{d_1} \LDT{d_2} \\
&= \DT{a_1} \DT{b} \DT{a_1} \DT{a_3} \DT{b} \DT{a_1} \underline{\DT{b} \DT{a_3} \DT{b}} \DT{a_1} \DT{b} \DT{a_3} \cdot \LDT{d_1} \LDT{d_2} \\
&= \DT{a_1} \DT{b} \DT{a_1} \DT{a_3} \DT{b} \DT{a_1} \DT{a_3} \DT{b} \DT{a_3} \DT{a_1} \DT{b} \DT{a_3} \cdot \LDT{d_1} \LDT{d_2} \\
&= \DT{a_1} \DT{b} \cdot \underline{\DT{a_1} \DT{a_3} \LDT{d_1}} \cdot \DT{b} \DT{a_1} \DT{a_3} \DT{b} \cdot \underline{\DT{a_3} \DT{a_1} \LDT{d_2}} \cdot \DT{b} \DT{a_3}. 
\end{align*}
Now we substitute the two lantern relations
\begin{align*}
\DT{d_1} \DT{\gamma_1} \DT{a_2} = \DT{a_1} \DT{a_3} \DT{\delta_1} \DT{d_3}, \\
\DT{d_2} \DT{a_4} \DT{\gamma_2} = \DT{a_1} \DT{a_3} \DT{\delta_2} \DT{d_4},
\end{align*}
to get 
\begin{align*}
1 &= \DT{a_1} \DT{b} \cdot \DT{a_1} \DT{a_3} \LDT{d_1} \cdot \LDT{a_1} \LDT{a_3} \DT{d_1} \DT{\gamma_1} \DT{a_2} \LDT{\delta_1} \LDT{d_3} \cdot \DT{b} \DT{a_1} \DT{a_3} \DT{b} \\
& \qquad \cdot  \LDT{\delta_2} \LDT{d_4} \DT{a_4} \DT{\gamma_2} \DT{d_2} \LDT{a_1} \LDT{a_3} \cdot \DT{a_3} \DT{a_1} \LDT{d_2} \cdot \DT{b} \DT{a_3}. 
\end{align*}
Cancellation and commutativity yield 
\begin{align*}
\DT{\delta_1} \DT{\delta_2} &= \underline{\DT{a_1} \DT{b} \DT{\gamma_1}} \DT{a_2} \DT{b} \DT{a_1} \cdot \LDT{d_3} \LDT{d_4} \cdot \DT{a_3} \DT{b} \DT{a_4} \underline{\DT{\gamma_2} \DT{b} \DT{a_3}} \\
&= \DT{B_0} \DT{a_1} \DT{b} \DT{a_2} \DT{b} \DT{a_1} \cdot \LDT{d_3} \LDT{d_4} \cdot \DT{a_3} \DT{b} \DT{a_4} \DT{b} \DT{a_3} \DT{B_0^\prime}
\intertext{where $B_0 := \DT{a_1}\DT{b}(\gamma_1)$ and $B_0^\prime := \LDT{a_3} \LDT{b}(\gamma_2)$,}
&= \DT{B_0} \DT{a_1} \underline{\DT{b} \DT{a_2} \DT{b}} \DT{a_1} \cdot \LDT{d_3} \LDT{d_4} \cdot \DT{a_3} \underline{\DT{b} \DT{a_4} \DT{b}} \DT{a_3} \DT{B_0^\prime} \\
&= \DT{B_0} \underline{\DT{a_1} \DT{a_2} \DT{b}} \DT{a_2} \DT{a_1} \cdot \LDT{d_3} \LDT{d_4} \cdot \DT{a_3} \DT{a_4} \underline{\DT{b} \DT{a_4} \DT{a_3}} \DT{B_0^\prime} \\
&= \DT{B_0} \DT{B_1} \DT{a_1} \DT{a_2} \DT{a_2} \DT{a_1} \cdot \LDT{d_3} \LDT{d_4} \cdot \DT{a_3} \DT{a_4} \DT{a_4} \DT{a_3} \DT{B_1^\prime} \DT{B_0^\prime} 
\intertext{where $B_1 := \DT{a_1}\DT{a_2}(b)$ and $B_1^\prime := \LDT{a_3} \LDT{a_4}(b)$,}
&= \DT{B_0} \DT{B_1} \DT{a_1} \DT{a_2} \cdot \underline{\DT{a_1} \DT{a_4} \LDT{d_4}} \cdot \underline{\LDT{d_3} \DT{a_2} \DT{a_3}} \cdot \DT{a_4} \DT{a_3} \DT{B_1^\prime} \DT{B_0^\prime}. 
\end{align*}
We further substitute two more lantern relations, specifically
\begin{align*}
\DT{d_4} \DT{x_1} \DT{y_1} = \DT{a_1} \DT{a_4} \DT{\delta_4} \DT{d_6}, \\
\DT{d_3} \DT{x_2} \DT{y_2} = \DT{a_2} \DT{a_3} \DT{\delta_3} \DT{d_5},
\end{align*}
so that
\begin{align*}
\DT{\delta_1} \DT{\delta_2} &=
\DT{B_0} \DT{B_1} \DT{a_1} \DT{a_2} \cdot \DT{a_1} \DT{a_4} \LDT{d_4} \cdot \LDT{a_1} \LDT{a_4} \DT{d_4} \DT{x_1} \DT{y_1} \LDT{\delta_4} \LDT{d_6} \\
& \qquad \cdot \LDT{d_5} \LDT{\delta_3} \DT{x_2} \DT{y_2} \DT{d_3} \LDT{a_2} \LDT{a_3} \cdot \LDT{d_3} \DT{a_2} \DT{a_3} \cdot \DT{a_4} \DT{a_3} \DT{B_1^\prime} \DT{B_0^\prime}. 
\end{align*}
This now gives
\begin{align}
\notag
\DT{\delta_1} \DT{\delta_2} \DT{\delta_3} \DT{\delta_4} \DT{d_5} \DT{d_6} &=
\DT{B_0} \DT{B_1} \DT{a_1} \DT{a_2} \DT{x_1} \DT{y_1} \DT{x_2} \DT{y_2} \DT{a_4} \DT{a_3} \DT{B_1^\prime } \DT{B_0^\prime} 
\intertext{or}
\DT{\delta_1} \DT{\delta_2} \DT{\delta_3} \DT{\delta_4} \DT{d_5} \DT{d_6} &=
\DT{B_0} \DT{B_1} \cdot \DT{a_1} \DT{a_2} \DT{x_1} \DT{x_2} \cdot \DT{y_1} \DT{y_2} \DT{a_3} \DT{a_4} \cdot \DT{B_1^\prime} \DT{B_0^\prime}
\label{eq:6holed_torus_relation} 
\end{align}
in $\M(\Sigma_1^6)$.
Finally, we push the boundary components $d_5$ and $d_6$ as indicated in Figure~\ref{fig:6HoledTorus_Rearrangement} so we get the Dehn twist curves as depicted in Figure~\ref{fig:6HoledTorus}.
With this configuration of the curves in mind, the relation~\eqref{eq:6holed_torus_relation} is the $6$--holed torus relation we wanted.
Note that we have used one $2$--chain relation and five lantern relations to construct the relation~\eqref{eq:6holed_torus_relation}.

\begin{figure}[htbp]
	\centering
	\includegraphics[height=175pt]{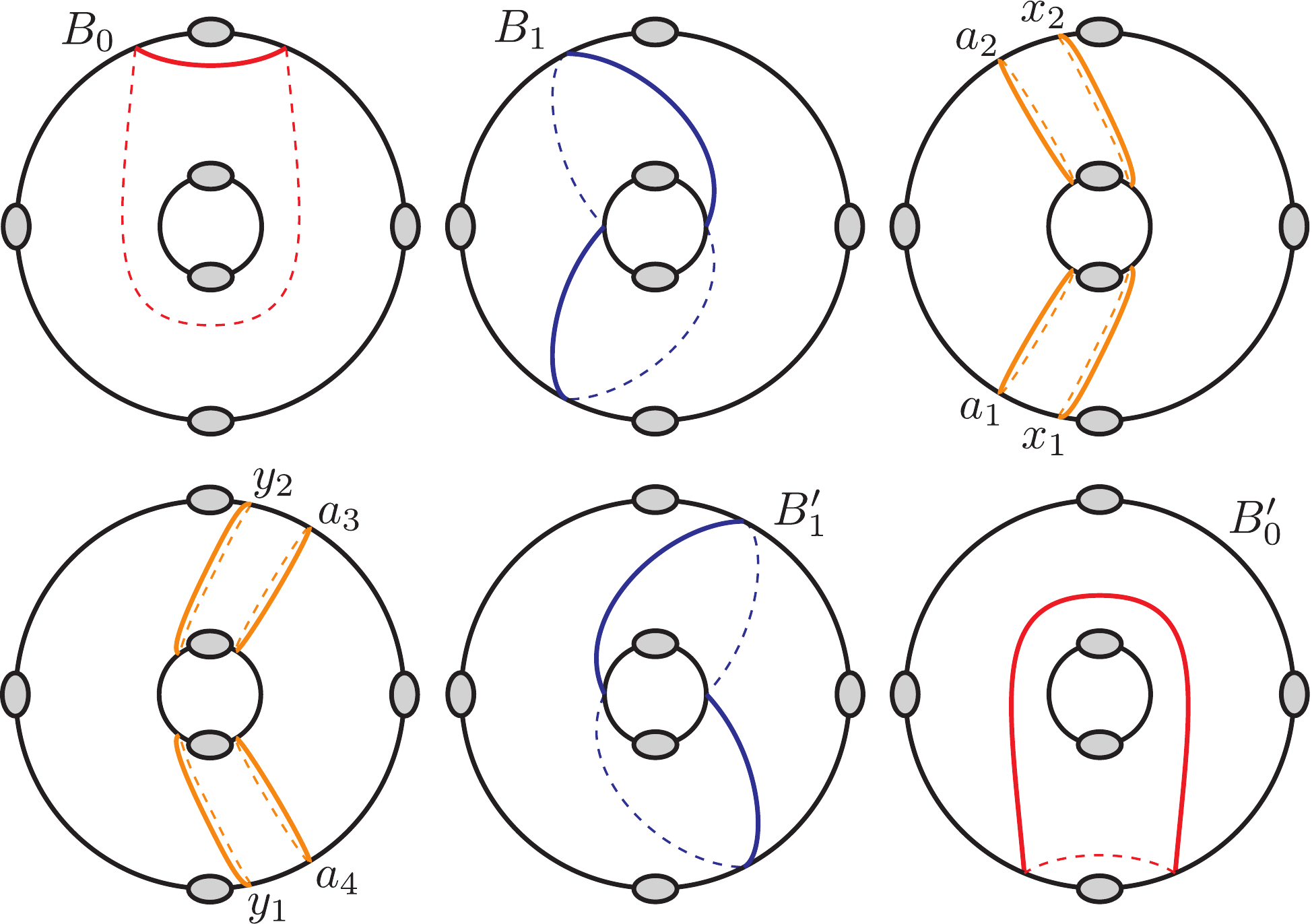}
	\caption{The curve configuration for our $6$--holed torus relation.} \label{fig:6HoledTorus}
\end{figure}

\smallskip
\noindent \textit{\underline{A lift of the $6$--holed torus relation}}:
Here we pause our construction to capture a lift of the relation~\eqref{eq:6holed_torus_relation}  in $\M(\Sigma_1^6)$ to $\M(\Sigma_{1,1}^6)$. 
This lift will involve a non-trivial point-pushing map, and thus it will not yield an honest section, but we will plug this information in later to describe a \emph{pseudosection} for the penultimate signature zero genus--$9$ Lefschetz fibration.

Consider the lower half of the $6$--holed torus $\Sigma_1^6$ and glue a $2$--holed disk to the boundary component $d_6$ so that the entire surface becomes $\Sigma_1^7$, as illustrated on the left in Figure~\ref{fig:6HoledTorus_MarkedPoint_Construction1}. Notice that the curves $a_1,x_1,d_6^\prime,d_6^{\prime\prime}$ cobound a $\Sigma_0^4$, so this is set up for a lantern substitution. We further add a disk with a marked point along the new boundary component $d_6^{\prime\prime}$, which results in the surface $\Sigma_{1,1}^6$.
Now the curves $d_6$, $x_1^\prime$, $a_1^\prime$ and $d_6^{\prime\prime}$, as shown on the right  in Figure~\ref{fig:6HoledTorus_MarkedPoint_Construction1}, make up the configuration of a lantern relation, though this relation is reduced to
\begin{align*}
\DT{d_6} \DT{x_1^\prime} \DT{a_1^\prime} = \DT{d_6^\prime} \DT{x_1} \DT{a_1}
\end{align*} 
for the Dehn twist along $d_6^{\prime\prime}$ is trivial in $\M(\Sigma_{1,1}^6)$.
We substitute this relation to the $6$--holed torus relation~\eqref{eq:6holed_torus_relation} as
\begin{align*}
\DT{\delta_1} \DT{\delta_2} \DT{\delta_3} \DT{\delta_4} \DT{d_5} \DT{d_6} &=
\DT{B_0} \DT{B_1} \cdot \DT{a_1} \DT{a_2} \DT{x_1} \DT{x_2} \cdot \LDT{a_1} \LDT{x_1} \LDT{d_6^\prime} \DT{d_6} \DT{x_1^\prime} \DT{a_1^\prime} \cdot  \DT{y_1} \DT{y_2} \DT{a_3} \DT{a_4} \cdot \DT{B_1^\prime} \DT{B_0^\prime}.
\end{align*}
By commutativity and cancellation, we obtain
\begin{align*}
\DT{\delta_1} \DT{\delta_2} \DT{\delta_3} \DT{\delta_4} \DT{d_5} \DT{d_6^\prime} &=
\DT{B_0} \DT{B_1} \cdot \DT{a_2} \DT{x_2} \cdot \DT{x_1^\prime} \DT{a_1^\prime} \cdot \DT{y_1} \DT{y_2} \DT{a_3} \DT{a_4} \cdot \DT{B_1^\prime} \DT{B_0^\prime}.
\end{align*}
We then  introduce canceling pairs $\DT{a_1} \LDT{a_1}$ and $\LDT{y_1^\prime} \DT{y_1^\prime}$ where the curve $y_1^\prime$ is as in Figure~\ref{fig:6HoledTorus_MarkedPoint_Construction2}.
\begin{align*}
\DT{\delta_1} \DT{\delta_2} \DT{\delta_3} \DT{\delta_4} \DT{d_5} \DT{d_6^\prime} &=
\DT{B_0} \DT{B_1} \cdot \DT{a_2} \DT{x_2} \cdot \DT{a_1} \LDT{a_1} \cdot \DT{x_1^\prime} \DT{a_1^\prime} \cdot \LDT{y_1^\prime} \DT{y_1^\prime} \cdot \DT{y_1} \DT{y_2} \DT{a_3} \DT{a_4} \cdot \DT{B_1^\prime} \DT{B_0^\prime} \\
&= \DT{B_0} \DT{B_1} \cdot \DT{a_2} \DT{x_2} \DT{a_1} \DT{x_1^\prime} \cdot \LDT{a_1} \DT{a_1^\prime} \cdot \LDT{y_1^\prime} \DT{y_1} \cdot \DT{y_1^\prime} \DT{y_2} \DT{a_3} \DT{a_4} \cdot \DT{B_1^\prime} \DT{B_0^\prime}.
\end{align*}
Observe that $a_1, a_1^\prime$ are parallel curves between which the marked point lies, so are $y_1,y_1^\prime$. Therefore, the factors $\LDT{a_1} \DT{a_1^\prime}$ and $\LDT{y_1^\prime} \DT{y_1}$ can be viewed as, respectively, the point-pushing maps $\mathcal{P}_{\vec{\beta}_1}$ and $\mathcal{P}_{\vec{\beta}_2}$ along the oriented loops $\vec{\beta}_1$ and $\vec{\beta}_2$ in Figure~\ref{fig:6HoledTorus_MarkedPoint_Construction3}.
The product $\mathcal{P}_{\vec{\beta}_1} \mathcal{P}_{\vec{\beta}_2}$, in turn, can be seen as the single point-pushing map $\mathcal{P}_{\vec{\alpha}_0}$ along $\vec{\alpha}_0$, which is homotopic to the concatenation of $\vec{\beta}_2$ and $\vec{\beta}_1$. The oriented loop $\vec{\alpha}_0$ is shown in Figure~\ref{fig:6HoledTorus_MarkedPoint_Construction3}. 
\begin{figure}[htbp]
	\centering
	\subfigure[Left: Adding a $2$--holed disk to the $6$--holed torus. Right: Capping off with a disk with a marked point, and getting a reduced lantern configuration. \label{fig:6HoledTorus_MarkedPoint_Construction1}]
	{\includegraphics[height=58pt]{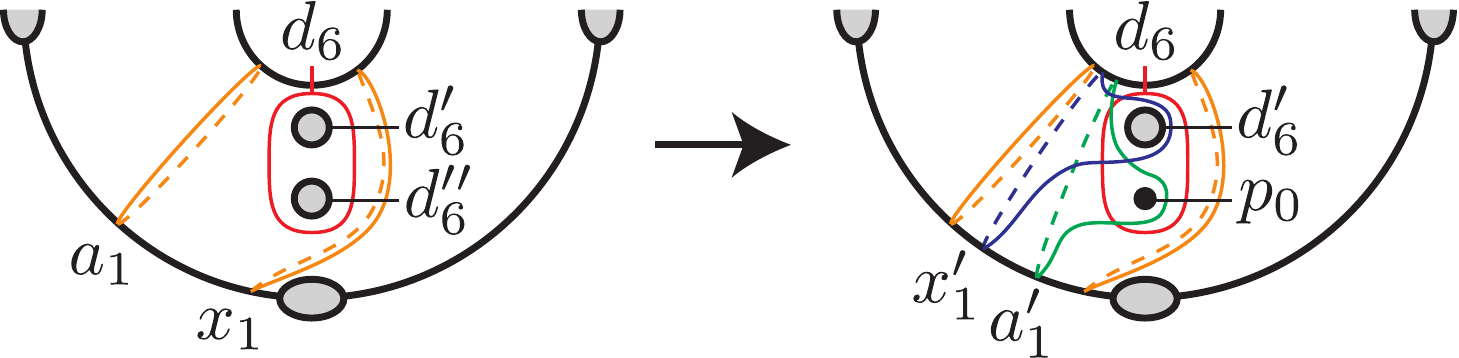}} 
	\hspace{10pt}
	\subfigure[Introducing canceling pairs about $a_1$ and $y_1^\prime$. \label{fig:6HoledTorus_MarkedPoint_Construction2}]
	{\includegraphics[height=52pt]{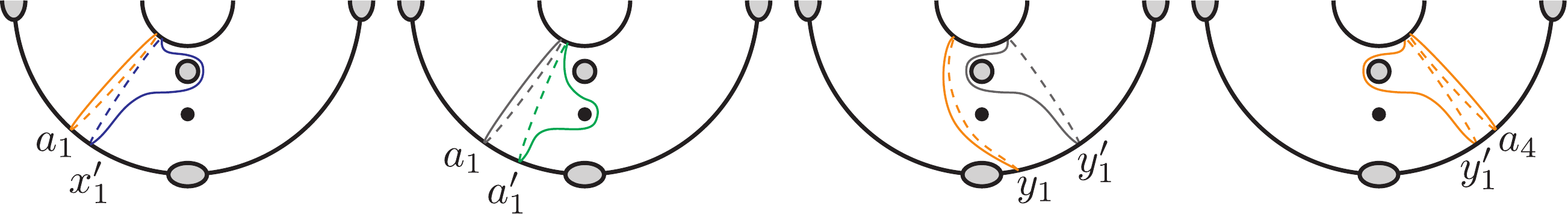}} 
	\hspace{10pt}
	\subfigure[The oriented loops $\vec{\beta}_1$, $\vec{\beta}_2$, and $\vec{\alpha}_0$ of the point-pushing maps. \label{fig:6HoledTorus_MarkedPoint_Construction3}]
	{\includegraphics[height=46pt]{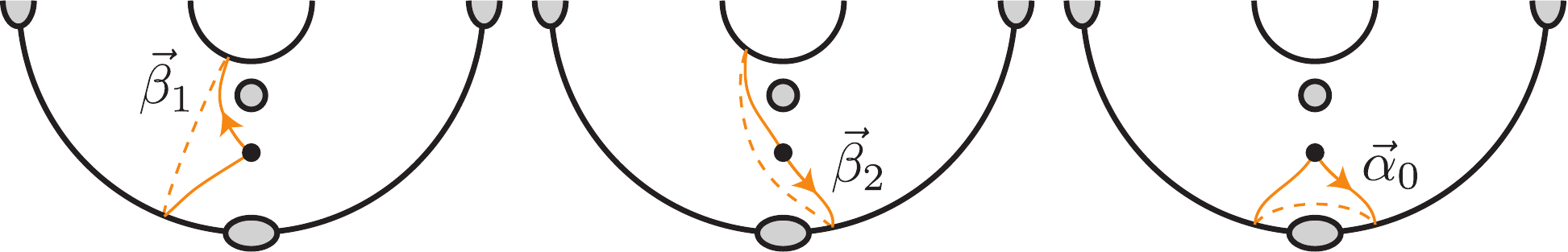}} 
	\caption{Finding a pseudosection of the $6$--holed torus relation.} 
	\label{fig:findingpseudosection}
\end{figure}

In summary, we have
\begin{align*}
\DT{\delta_1} \DT{\delta_2} \DT{\delta_3} \DT{\delta_4} \DT{d_5} \DT{d_6^\prime} 
&= \DT{B_0} \DT{B_1} \cdot \DT{a_2} \DT{x_2} \DT{a_1} \DT{x_1^\prime} \cdot \LDT{a_1} \DT{a_1^\prime} \cdot \LDT{y_1^\prime} \DT{y_1} \cdot \DT{y_1^\prime} \DT{y_2} \DT{a_3} \DT{a_4} \cdot \DT{B_1^\prime} \DT{B_0^\prime} \\
&= \DT{B_0} \DT{B_1} \cdot \DT{a_2} \DT{x_2} \DT{a_1} \DT{x_1^\prime} \cdot \mathcal{P}_{\vec{\beta}_1} \cdot \mathcal{P}_{\vec{\beta}_2} \cdot \DT{y_1^\prime} \DT{y_2} \DT{a_3} \DT{a_4} \cdot \DT{B_1^\prime} \DT{B_0^\prime} \\
&= \DT{B_0} \DT{B_1} \cdot \DT{a_2} \DT{x_2} \DT{a_1} \DT{x_1^\prime} \cdot \mathcal{P}_{\vec{\alpha}_0} \cdot \DT{y_1^\prime} \DT{y_2} \DT{a_3} \DT{a_4} \cdot \DT{B_1^\prime} \DT{B_0^\prime}.
\end{align*}
Finally, by dropping the prime symbol from $d_6^\prime$, $x_1^\prime$, $y_1^\prime$ (which should cause no confusion with the previous notation) we rewrite this relation in $\M(\Sigma_{1,1}^6)$ as:
\begin{align}
\DT{\delta_1} \DT{\delta_2} \DT{\delta_3} \DT{\delta_4} \DT{d_5} \DT{d_6} &=
\DT{B_0} \DT{B_1} \cdot \DT{a_1} \DT{a_2} \DT{x_1} \DT{x_2} \cdot \mathcal{P}_{\vec{\alpha}_0} \cdot \DT{y_1} \DT{y_2} \DT{a_3} \DT{a_4} \cdot \DT{B_1^\prime} \DT{B_0^\prime},
\label{eq:6holed_torus_relation_with_a_marked_point} 
\end{align}
where the curves are as illustrated in Figure~\ref{fig:6HoledTorus_MarkedPoint}.
This is our lift of the $6$--holed torus relation~\eqref{eq:6holed_torus_relation}.
\begin{figure}[htbp]
	\centering
	\includegraphics[height=175pt]{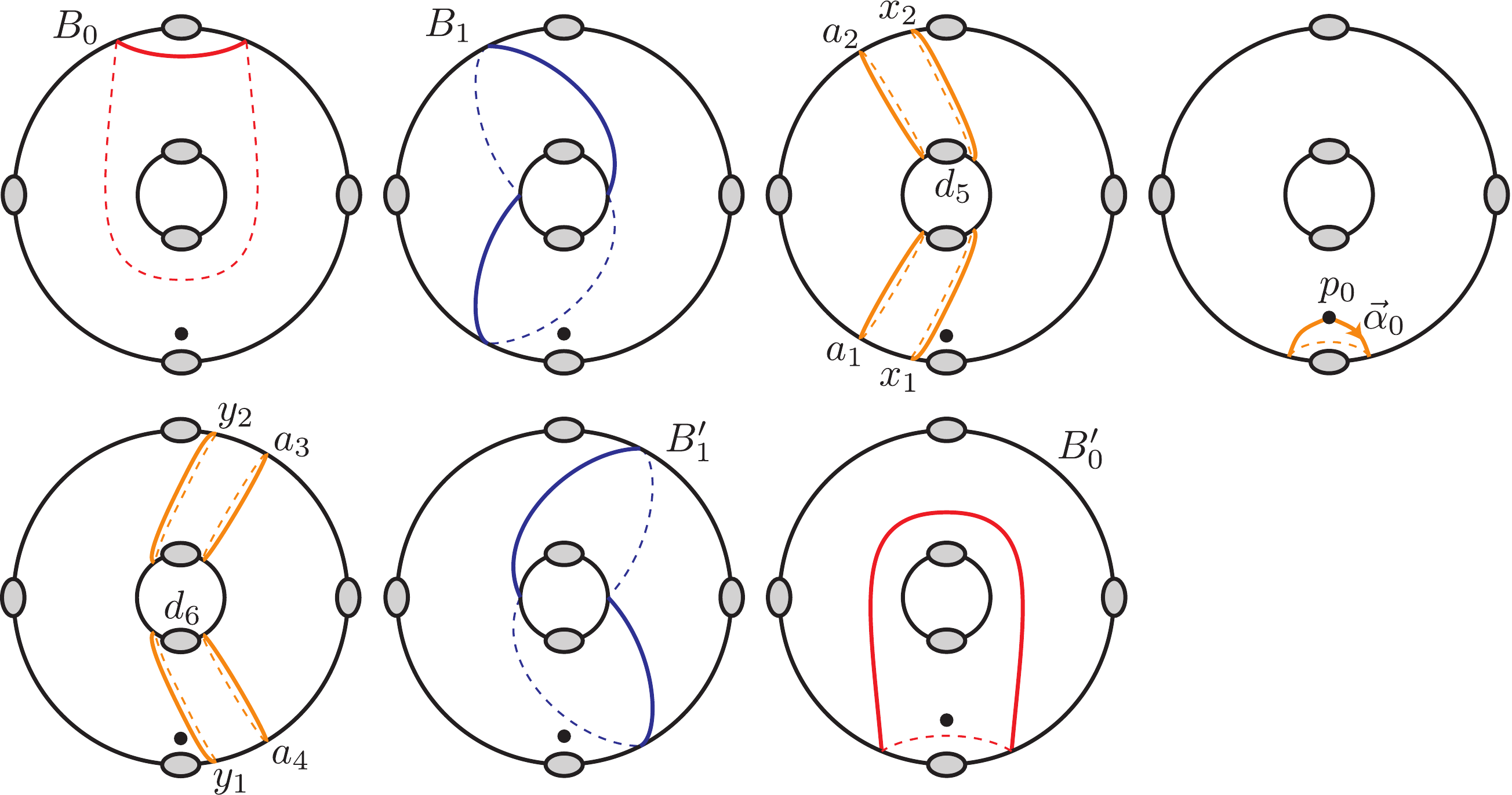}
	\caption{The $6$--holed torus relation with a marked point.} \label{fig:6HoledTorus_MarkedPoint}
\end{figure}
It happens that the oriented loop $\vec{\alpha}_0$ of the point-pushing map in the lift encircles one of the boundary components without intersecting any of the Dehn twist curves. This will make it easier to describe lifts in later steps of our construction to follow.

\smallskip
\noindent \textit{\underline{A genus--$2$ pencil}}:
We now construct the genus--$2$ Lefschetz pencil that will become one of our main building blocks. Take the boundary components $d_5$, $d_6$ of the $6$--holed torus $\Sigma_{1,1}^6$ in the previous step and connect them by a tube as shown in Figure~\ref{fig:Genus2_6HoledTorusEmbedding}. This gives a  $\Sigma_{2,1}^4$, a genus--$2$ surface with four boundary components and a marked point. The curves $d_5$ and $d_6$ become identical, so we denote both by $d$.
\begin{figure}[htbp]
	\centering
	\subfigure[The embedding of $\Sigma_{1,1}^6$ into $\Sigma_{2,1}^4$. \label{fig:Genus2_6HoledTorusEmbedding}]
	{\includegraphics[height=115pt]{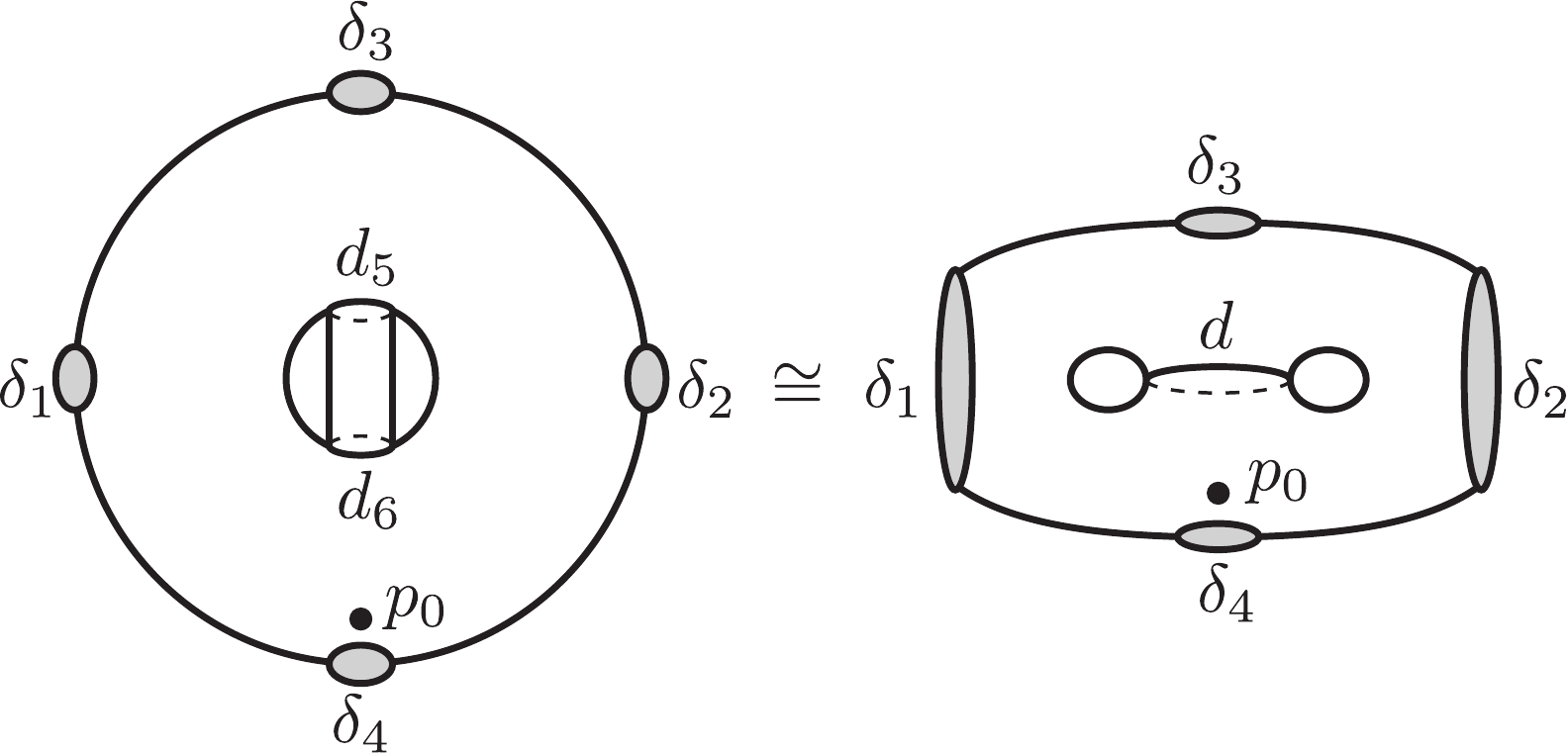}} 
	\hspace{10pt}
	\subfigure[Two configurations of lantern curves in $\Sigma_{2,1}^4$. \label{fig:Genus2_6HoledTorusEmbedding_Lantern}]
	{\includegraphics[height=70pt]{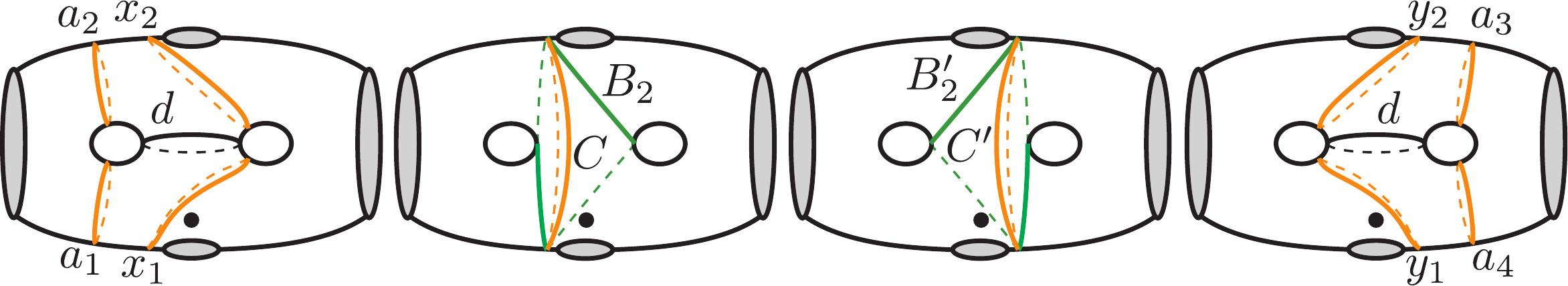}} 
	\hspace{10pt}
	\subfigure[Rearranging the boundary components $\delta_3$ and $\delta_4$.
	On the right, $\delta_4$ is on the front and $\delta_3$ is on the back of the surface. \label{fig:Genus2Push_MarkedPoint}]
	{\includegraphics[height=70pt]{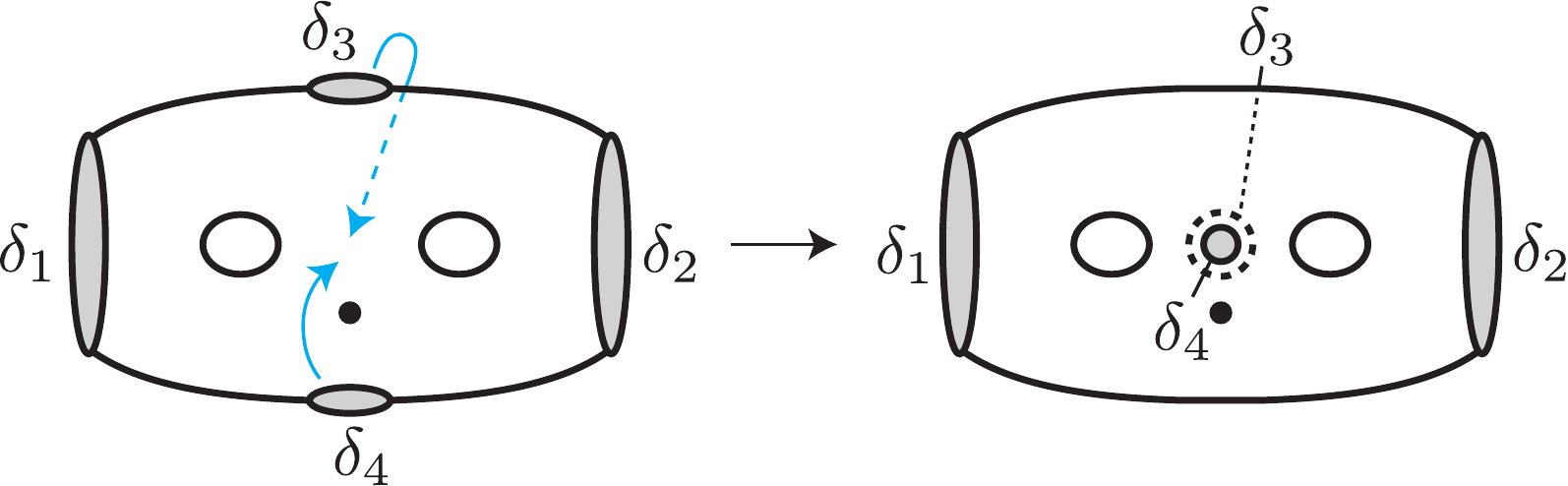}} 
	\caption{Construction of a genus--$2$ pencil.} 
	\label{fig:ConstructionOfGenus2LP}
\end{figure}

\begin{figure}[htbp]
	\centering
	\includegraphics[height=120pt]{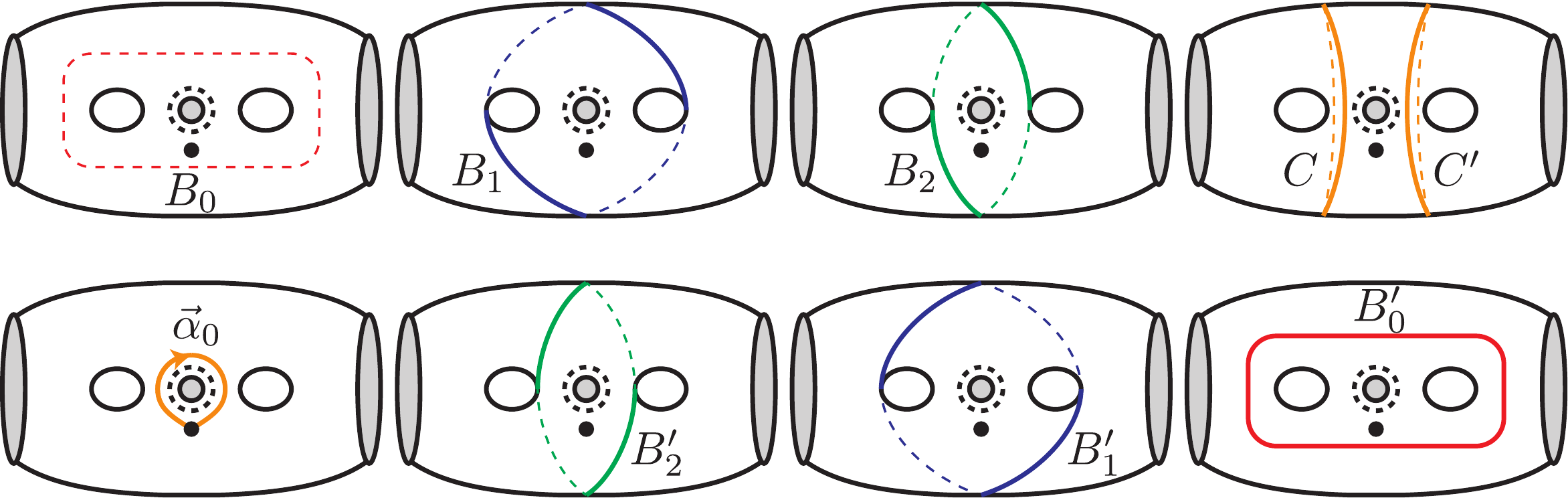}
	\caption{Vanishing cycles of a genus--$2$ pencil.
		Notice that $B_0,B_0^\prime$ is a bounding pair and the two separating curves $C$, $C^\prime$ are disjoint.}  \label{fig:MatsumotoSectionsIIA_MirrorSymmetrics_MarkedPoint}
\end{figure}

Now for the quadruples of curves $a_1,a_2,x_1,x_2,d$ and $y_1,y_2,a_3,a_4,d$ each cobounding a $\Sigma_0^4$ as shown in Figure~\ref{fig:Genus2_6HoledTorusEmbedding_Lantern}, we have the lantern relations
\begin{align*}
\DT{d} \DT{B_2} \DT{C} &= \DT{a_1} \DT{a_2} \DT{x_1} \DT{x_2}, \\
\DT{C^\prime} \DT{B_2^\prime} \DT{d} &= \DT{y_1} \DT{y_2} \DT{a_3} \DT{a_4},
\end{align*}
where the curves $B_2, C, C^\prime, B_2^\prime$ are as  in Figure~\ref{fig:Genus2_6HoledTorusEmbedding_Lantern}.
Combining the two with our $6$--holed torus relation~\eqref{eq:6holed_torus_relation_with_a_marked_point} yields
\begin{align*}
\DT{\delta_1} \DT{\delta_2} \DT{\delta_3} \DT{\delta_4} \DT{d} \DT{d} &=
\DT{B_0} \DT{B_1} \cdot \underline{\DT{a_1} \DT{a_2} \DT{x_1} \DT{x_2}} \cdot \mathcal{P}_{\vec{\alpha}_0} \cdot \underline{\DT{y_1} \DT{y_2} \DT{a_3} \DT{a_4}} \cdot \DT{B_1^\prime} \DT{B_0^\prime} \\
&= \DT{B_0} \DT{B_1} \cdot \DT{d} \DT{B_2} \DT{C} \cdot \mathcal{P}_{\vec{\alpha}_0} \cdot \DT{C^\prime} \DT{B_2^\prime} \DT{d} \cdot \DT{B_1^\prime} \DT{B_0^\prime}.
\end{align*}
By canceling the $\DT{d}$ factors we obtain the relation
\begin{align} 
\DT{\delta_1} \DT{\delta_2} \DT{\delta_3} \DT{\delta_4} &=
\DT{B_0} \DT{B_1} \DT{B_2} \DT{C} \cdot \mathcal{P}_{\vec{\alpha}_0} \cdot \DT{C^\prime} \DT{B_2^\prime} \DT{B_1^\prime} \DT{B_0^\prime}
\label{eq:MatsumotoLP_IIA_MarkedPoint} 
\end{align}
in $\M(\Sigma_{2,1}^4)$.
Moreover, after pushing the boundary components $\delta_3$ and $\delta_4$ as indicated in Figure~\ref{fig:Genus2Push_MarkedPoint}, we get a neater presentation of the Dehn twist curves as illustrated in Figure~\ref{fig:MatsumotoSectionsIIA_MirrorSymmetrics_MarkedPoint}.
Pairs of curves labeled with the same letters, but one decorated with a prime and one without it, are all symmetric under the obvious involution. 

The relation~\eqref{eq:MatsumotoLP_IIA_MarkedPoint} (after forgetting the marked point) is the monodromy factorization for our genus--$2$ Lefschetz pencil. Observe that in the monodromy factorization~\eqref{eq:MatsumotoLP_IIA_MarkedPoint}  the  pair $B_0, B_0^\prime$ cobounds two copies of $\Sigma_1^4$ with the boundary components, and we also have two disjoint separating curves $C$ and $C^\prime$, each of which cobounds a copy of $\Sigma_1^2$ with a boundary component. 

Let us compute the signature $\sigma$ of the total space of this genus--$2$ pencil. As we noted earlier, the $6$--holed torus relation~\eqref{eq:6holed_torus_relation} is derived by using a single $2$--chain relation and five lantern relations. We have then used two more lantern relations to obtain the relation~\eqref{eq:MatsumotoLP_IIA_MarkedPoint}. (Here we forget the marked point and the relations we employed for finding the point-pushing map.) Since the $2$--chain relation and the lantern relation contribute $-7$ and $+1$ to the signature, respectively, we compute the signature of the pencil as \, $\sigma = 1(-7) + 7(+1) =0$. 

We can moreover describe an explicit spin structure on this pencil (which we will make use of in Section~5.1.) Let $\{ \alpha_1, \beta_1, \alpha_2, \beta_2, \delta_1, \delta_2, \delta_3 \}$ be a  basis for $H_1(\Sigma_2^4;\mathbb{Z}_2)$, given by the same labeled curves in Figure~\ref{fig:MatsumotoSectionsIIA_MirrorSymmetric_H1BasisMod2}.
\begin{figure}[htbp]
	\centering
	\includegraphics[height=50pt]{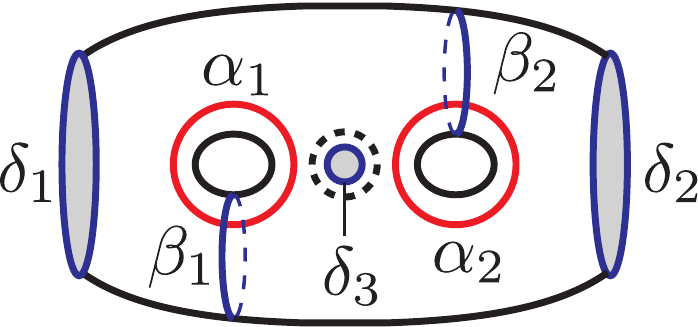}
	\caption{A basis for $H_1(\Sigma_2^4;\mathbb{Z}_2)$.}  \label{fig:MatsumotoSectionsIIA_MirrorSymmetric_H1BasisMod2}
\end{figure}

\noindent We can then compute the $\Z_2$--homology classes of the vanishing cycles as:
\begin{align*} 
B_0 &= \alpha_1 + \alpha_2 + \delta_1 + \delta_2 + \delta_3; \\
B_1 &= \alpha_1 + \beta_1 + \alpha_2 + \beta_2 + \delta_1 + \delta_2 + \delta_3; \\
B_2 &= \beta_1 + \beta_2 + \delta_1 + \delta_2 + \delta_3; \\
C &= \delta_1; \\
C^\prime &= \delta_2; \\
B_2^\prime &= \beta_1 + \beta_2 + \delta_3; \\
B_1^\prime &= \alpha_1 + \beta_1 + \alpha_2 + \beta_2 + \delta_3; \\
B_0^\prime &= \alpha_1 + \alpha_2 + \delta_3.
\end{align*}
If we now define a quadratic form $q$ on $H_1(\Sigma_2^4; \Z_2)$ such that
\begin{equation} \label{Genus2SpinStructure}
q(\alpha_i)=q(\beta_i)=q(\delta_j)=1 \text{ for all } i=1,2 \text{ and } j=1,\ldots, 4 \, ,
\end{equation}
then it maps each vanishing cycle to $1$, and by Theorem~\ref{SpinLP}, we get a spin structure on the total space of the pencil.

Equipping the pencil with a Gompf-Thurston symplectic form, we get a symplectic $4$--manifold. Then, observing that the fiber of the pencil violates the adjunction inequality, we conclude that the total space has to be a rational or a ruled surface.
As it quickly follows from the above calculation, the rank of the $\Z_2$--homology of this $4$--manifold is two, thus it should be the ruled surface $T^2 \times S^2$.

\begin{remark}
There are other explicit monodromy factorizations for genus--$2$ pencils with four base points on the ruled surface $T^2 \times S^2$, which were obtained by the second author in \cite{Hamada} as lifts of Matsumoto's well-known relation \cite{Matsumoto}. It is natural to ask whether the one we discovered here is Hurwitz equivalent to any of those, and we will show that  it is indeed the case in Appendix~A.
\end{remark}

\smallskip
\subsection{A signature zero genus--$3$ pencil}  \label{SecGenus3} \

We will next describe a genus--$3$ pencil, the total space of which has signature zero and is spin, whereas its monodromy has four (in fact five) pairs of Dehn twists along certain bounding pairs.

We breed two copies of the genus--$2$ pencil constructed in the previous subsection to obtain the desired genus--$3$ pencil. The curves $\delta_1, \delta_3, C_3, C_4$ on $\Sigma_{3,2}^4$ in Figure~\ref{fig:Genus3Breeding} cobound a subsurface diffeomorphic to $\Sigma_{2,1}^4$. The same goes for the curves $\delta_2, \delta_4, C_1, C_2$. Thus, we can embed two copies of the relation~\eqref{eq:MatsumotoLP_IIA_MarkedPoint} in $\M(\Sigma_{2,1}^4)$ into $\M(\Sigma_{3,1}^4)$ as
\begin{align*}
& \DT{a^\prime} \DT{x} \DT{b} \DT{C_1} \cdot \mathcal{P}_{\vec{\alpha}_1} \cdot \DT{C_2} \DT{d^\prime} \DT{w} \DT{a} = \DT{\delta_1} \DT{\delta_3} \DT{C_3} \DT{C_4}, \\
& \DT{c^\prime} \DT{z} \DT{d} \DT{C_3} \cdot \mathcal{P}_{\vec{\alpha}_2} \cdot \DT{C_4} \DT{b^\prime} \DT{y} \DT{c} = \DT{\delta_2} \DT{\delta_4} \DT{C_1} \DT{C_2},
\end{align*}
where the curves are as shown in Figure~\ref{fig:Genus3LP_4BoundingPairs}. 
Note that the second embedding is obtained by the first embedding followed by a rotation by $\pi$ about the horizontal line, so while the first marked point is near $\delta_1$ on the front of the surface, the second is placed near $\delta_4$ on the back. Dehn twist curves in the two relations above are the images, under the respective embeddings, of the curves of the relation \eqref{eq:MatsumotoLP_IIA_MarkedPoint} in the same order.

\begin{figure}[htbp]
	\centering
	\includegraphics[height=110pt]{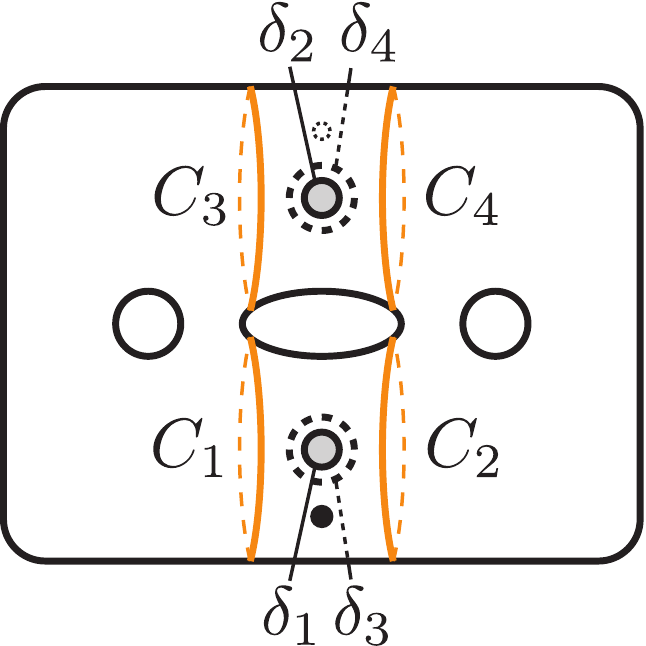}
	\caption{The curves $C_i, \delta_i$ on $\Sigma_{3,2}^4$.} 
	\label{fig:Genus3Breeding}
\end{figure}
\begin{figure}[htbp]
	\centering
	\includegraphics[height=160pt]{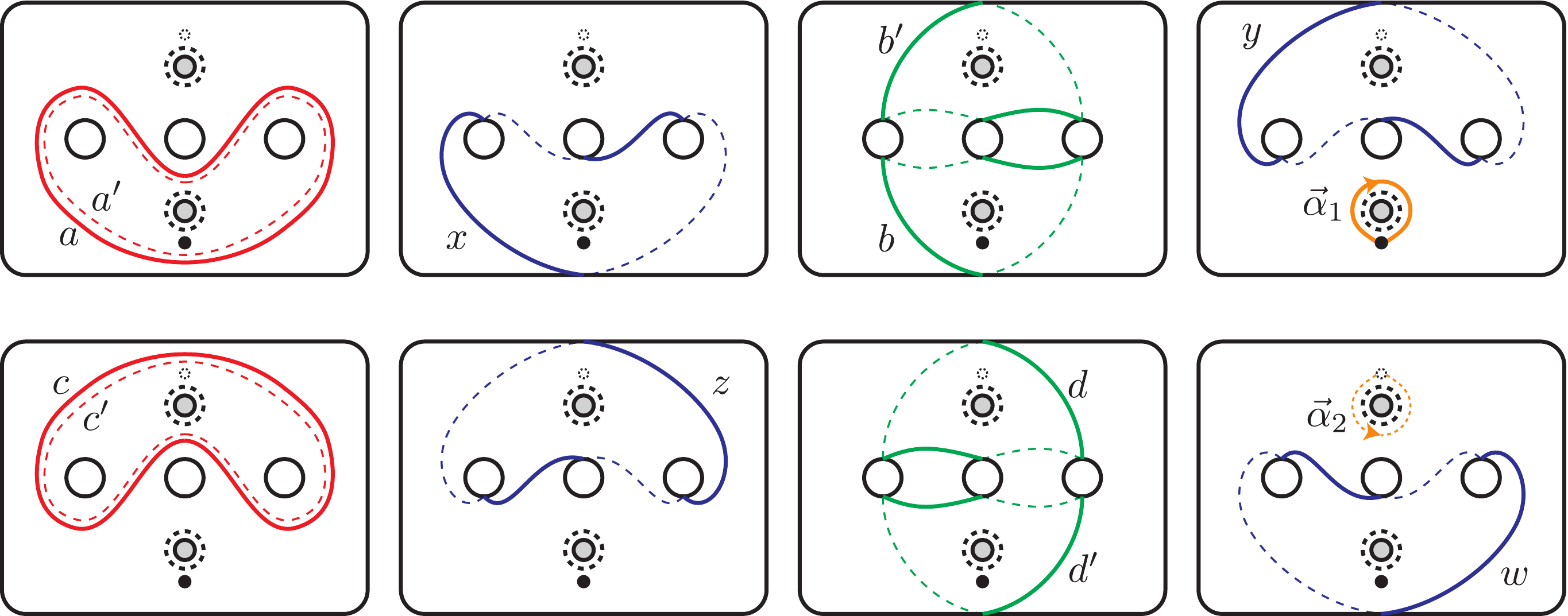}
	\caption{Vanishing cycles of the genus--$3$ pencil.} 
	\label{fig:Genus3LP_4BoundingPairs}
\end{figure}

\enlargethispage{0.1in}
By cyclic permutations, we can rewrite the relations as
\begin{align*}
& \DT{d^\prime} \DT{w} \DT{a} \DT{a^\prime} \DT{x} \DT{b} \cdot \mathcal{P}_{\vec{\alpha}_1} = \DT{\delta_1} \DT{\delta_2} \DT{C_3} \DT{C_4} \LDT{C_2} \LDT{C_1}, \\
& \DT{b^\prime} \DT{y} \DT{c} \DT{c^\prime} \DT{z} \DT{d} \cdot \mathcal{P}_{\vec{\alpha}_2} = \DT{\delta_3} \DT{\delta_4} \DT{C_1} \DT{C_2} \LDT{C_4} \LDT{C_3}.
\end{align*}
Combining them we get
\begin{align*}
\DT{d^\prime} \DT{w} \DT{a} \DT{a^\prime} \DT{x} \DT{b} \cdot \mathcal{P}_{\vec{\alpha}_1} \cdot \DT{b^\prime} \DT{y} \DT{c} \DT{c^\prime} \DT{z} \DT{d} \cdot \mathcal{P}_{\vec{\alpha}_2} = \DT{\delta_1} \DT{\delta_2} \DT{\delta_3} \DT{\delta_4},
\end{align*}
which, using cyclic permutation and commutativity, can be expressed as
\begin{align} \label{eq:Genus3LP_4BoundingPairs}
\DT{a} \DT{a^\prime} \DT{x} \DT{b} \DT{b^\prime} \DT{y} \DT{c} \DT{c^\prime} \DT{z} \DT{d} \DT{d^\prime} \DT{w} \cdot \mathcal{P}_{\vec{\alpha}_1} \mathcal{P}_{\vec{\alpha}_2} = \DT{\delta_1} \DT{\delta_2} \DT{\delta_3} \DT{\delta_4}.
\end{align}
This is a relation in $\M(\Sigma_{3,2}^4)$, which, by forgetting the marked points, reduces to the positive factorization of a genus--$3$ pencil.

In the relation~\eqref{eq:Genus3LP_4BoundingPairs}, we have the two bounding pairs $a,a^\prime$ and $c,c^\prime$, which are inherited from the genus--$2$ pencil. There are two more bounding pairs $b,b^\prime$ and $d,d^\prime$; see Figure~\ref{fig:Genus3LP_4BoundingPairs}. (The existence of these four bounding pairs is the most essential feature for our constructing of the signature zero Lefschetz fibration in the next subsection.) There is in fact a fifth  bounding pair we can get after Hurwitz moves: We can move the factor $\DT{x}$ over the subword $\DT{b}\DT{b^\prime} \DT{y}\DT{c}\DT{c^\prime}$ (i.e. we conjugate this subword with $\DT{x}$) to get the subword $\DT{x}\DT{z}$ in the monodromy, where the pair $x,z$ too splits the surface into two genus--$1$ subsurfaces. (Note that the pair $y,w$ is yet another bounding pair, but this property is destroyed when we bring $\DT{x}$ and $\DT{z}$ together above.)

The total space $Y$ of our genus--$3$ pencil  has signature zero since the relation~\eqref{eq:Genus3LP_4BoundingPairs} is the combination of two copies of the relation~\eqref{eq:MatsumotoLP_IIA_MarkedPoint} with signature zero. The Euler characteristic is easily calculated as $\eu(Y)=4 -4 g +l -b=4-4\cdot 3+12 -4=0$ (where $g$ is the fiber genus, $l$ is the number of Lefschetz critical points and $b$ is the number of base points). 

To pin down the topology of $Y$ we will calculate its first homology. Let us use the  symplectic basis $\{ \alpha_1, \beta_1, \alpha_2, \beta_2, \alpha_3, \beta_3 \}$ for $H_1(\Sigma_4;\mathbb{Z})$ as in Figure~\ref{fig:Genus3_H1Basis}.
\begin{figure}[htbp]
	\centering
	\includegraphics[height=70pt]{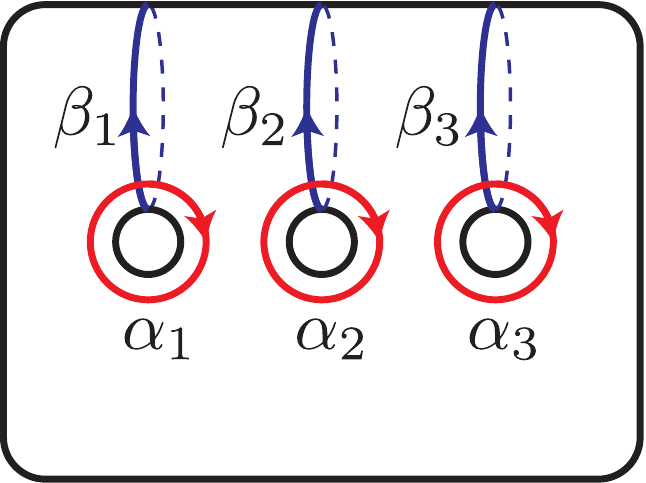}
	\caption{The basis for $H_1(\Sigma_4;\mathbb{Z})$.} 
	\label{fig:Genus3_H1Basis}
\end{figure}
The homology classes of the vanishing cycles in this basis are:
$a = a^\prime = c = c^\prime = \alpha_1 + \alpha_3$, \,$x = z = \alpha_1 + \alpha_3 -\beta_1 +\beta_2 -\beta_3$, \,$b = b^\prime = d = d^\prime = \beta_1 -\beta_2 +\beta_3$\, and $y = w = \alpha_1 + \alpha_3 + \beta_1 -\beta_2 + \beta_3$.  In turn, the relations in $H_1(Y; \Z)$ we obtain by setting the vanishing cycles equal to zero are: $\alpha_3 = -\alpha_1$ \,and $\beta_3 = -\beta_1 + \beta_2$. Therefore $H_1(Y;\mathbb{Z})$ is freely generated by $\alpha_1, \beta_1, \alpha_2, \beta_2$, and $H_1(Y;\mathbb{Z})=\mathbb{Z}^4$.

Since $\sigma(Y)=\eu(Y)$ and $b_1(Y)=4$, $Y$ cannot be a rational or ruled surface. Since this genus--$3$ pencil has $4$ base points, by  \cite[Theorem 1.2]{BaykurHayano}, $Y$ is a symplectic Calabi-Yau $4$--manifold, rational homology equivalent to $T^4$. (In fact one can show that $\pi_1(Y) \cong \Z^4$ and thus $Y$ is homeomorphic to $T^4$.) In particular we see that $Y$ is spin.
Alternatively, one can directly construct a quadratic form on $\Sigma_3^4$ that satisfies the conditions in Theorem~\ref{SpinLP} to show $Y$ is spin.

\begin{remark}
There are other explicit monodromy factorizations for genus--$3$ pencils with four base points on symplectic Calabi-Yau $4$--manifolds homeomorphic to $T^4$, obtained by the first author in \cite{BaykurGenus3} and the second author and Hayano in  \cite{HamadaHayano}. It is once again natural to ask whether the one we discovered here is Hurwitz equivalent to any of those, and we will also confirm that this is the case in Appendix~A, which in particular will imply that this $4$--manifold $Y$ is \emph{diffeomorphic} to the standard  $T^4$.
\end{remark}

\smallskip
\subsection{A signature zero genus--$9$ Lefschetz fibration over the $2$--sphere}  \

We will now describe our genus--$9$ fibration $(X,f)$, the total space of which has signature zero. Further properties of $X$, such as it being spin, will be explored in Section~\ref{topofkeyLF}. Expectedly, its monodromy factorization will not contain any Dehn twists along separating curves (which would destroy the spin property), but  it contains Dehn twists along a quadruple of bounding curves which split the $\Sigma_9$ into two copies of $\Sigma_3^4$. This extra property will  be essential for our arguments in Section~5.

We will first explain how we obtain our signature zero Lefschetz fibration, without specifying all the embeddings involved in this construction, and thus without an explicit description of all the curves in the final monodromy factorization. We will also omit the marked point in this exposition. This first round of information suffices to justify the existence of a signature zero genus--$9$ Lefschetz fibration. Afterwards, we will describe the embeddings explicitly, and also include the marked point for the pseudosection calculation. The latter will take the chunk of this subsection.

\smallskip
\noindent \textit{\underline{Schematic construction}}:
For simplicity, we omit all the marked points in any of the figures we will refer to here. We will breed six copies of our genus--$3$ pencil with monodromy factorization~\eqref{eq:Genus3LP_4BoundingPairs}.
As seen on the right side of Figure~\ref{fig:Genus9_Genus3Embedding1and2}, the curves $\Delta_1, \Delta_2, \Delta_3, \Delta_4$ on $\Sigma_9$ bound two copies of $\Sigma_3^4$, which constitute the top and the bottom halves of $\Sigma_9$.
We embed two copies of the relation~\eqref{eq:Genus3LP_4BoundingPairs} in $\M(\Sigma_3^4)$ into $\M(\Sigma_9)$ via the embeddings $\Phi_1$ and $\Phi_2$ as explained  in Figure~\ref{fig:Genus9_Genus3Embedding1and2} (and its caption), such that the first one is supported on the top and the second one on the bottom half:
\begin{align*} 
	&\DT{a_1} \DT{a_1^\prime} \DT{x_1} \DT{b_1} \DT{b_1^\prime} \DT{y_1} \DT{c_1} \DT{c_1^\prime} \DT{z_1} \DT{d_1} \DT{d_1^\prime} \DT{w_1} = \DT{\Delta_1} \DT{\Delta_2} \DT{\Delta_3} \DT{\Delta_4}, \\
	&\DT{a_2} \DT{a_2^\prime} \DT{x_2} \DT{b_2} \DT{b_2^\prime} \DT{y_2} \DT{c_2} \DT{c_2^\prime} \DT{z_2} \DT{d_2} \DT{d_2^\prime} \DT{w_2} = \DT{\Delta_1} \DT{\Delta_2} \DT{\Delta_3} \DT{\Delta_4}.
\end{align*}
These curves are explicitly given in Figures~\ref{fig:Genus9_BoundingQuadruples} and~\ref{fig:SignatureZeroLF_VanishingCycles}.
As usual, the Dehn twist curves in the two relations above are the images, under the respective embeddings $\Phi_1$ and $\Phi_2$, of the curves of the relation \eqref{eq:Genus3LP_4BoundingPairs} in the same order -- and the same goes for our four other embeddings to follow. 
\begin{figure}[htbp]
	\begin{tabular}{c}
		\begin{minipage}[t]{1\hsize}
			\centering
			\includegraphics[height=130pt]{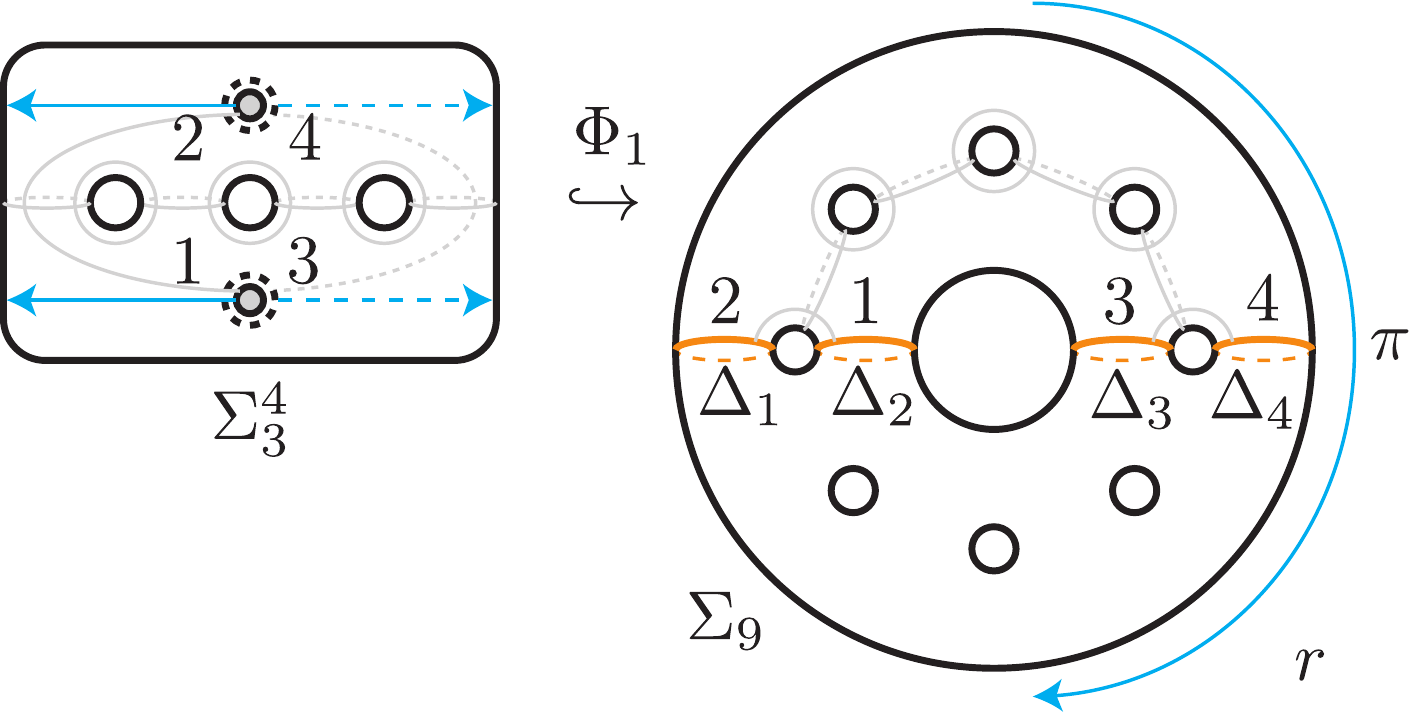}
			\caption{The embedding $\Phi_1$ of $\Sigma_3^4$ into $\Sigma_{9}$. First, we slide the four boundary components as indicated by the blue arrows. Then embed the surface into the top half of $\Sigma_9$. The labels $1,2,3,4$ indicate how the boundary components are matched with the  quadruple $\Delta_1, \Delta_2, \Delta_3, \Delta_4$. Also, the gray curves, which fill $\Sigma_3^4$, and their embedded images are drawn so that the embedding $\Phi_1$ is uniquely defined. The second embedding $\Phi_2$ is given by $\Phi_1$ followed by the $\pi$--rotation $r$, that is $\Phi_2 := r \circ \Phi_1$.} 
			\label{fig:Genus9_Genus3Embedding1and2}
		\end{minipage} \\ 
		\vspace{1\baselineskip} \\
		\begin{minipage}[t]{1\hsize}
			\centering
			\includegraphics[height=100pt]{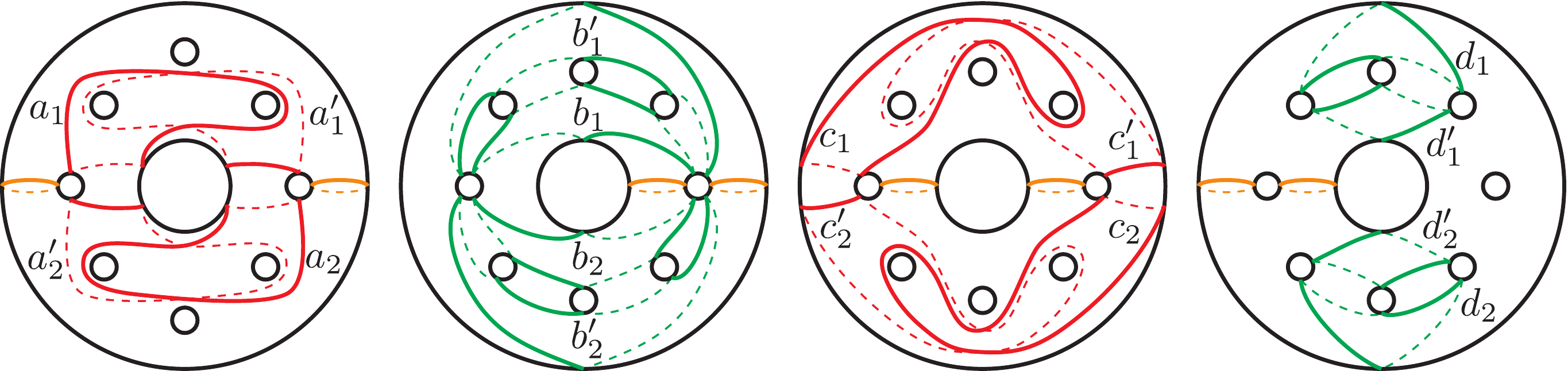}
			\caption{The four bounding quadruples.} 
			\label{fig:Genus9_BoundingQuadruples}
		\end{minipage}
	\end{tabular}
\end{figure}

Since these two relations have disjoint supports, we can combine them to get 
\begin{align*} 
	\DT{a_1} \DT{a_1^\prime} \DT{a_2} \DT{a_2^\prime} \DT{x_1} \DT{x_2} \DT{b_1} \DT{b_1^\prime} \DT{b_2} \DT{b_2^\prime} \DT{y_1} \DT{y_2} \DT{c_1} \DT{c_1^\prime} \DT{c_2} \DT{c_2^\prime} \DT{z_1} \DT{z_2} \DT{d_1} \DT{d_1^\prime} \DT{d_2} \DT{d_2^\prime} \DT{w_1} \DT{w_2}
	= \DT{\Delta_1}^2 \DT{\Delta_2}^2 \DT{\Delta_3}^2 \DT{\Delta_4}^2, 
\end{align*}
in $\M(\Sigma_{9})$, which we can rewrite as
\begin{align} \label{eq:Signature0LF_BeforeSubstitutions}
	& \underline{\DT{a_1} \DT{a_1^\prime} \DT{a_2} \DT{a_2^\prime} \LDT{\Delta_1} \LDT{\Delta_4}} \cdot \DT{x_1} \DT{x_2} \cdot 
	\underline{\DT{b_1} \DT{b_1^\prime} \DT{b_2} \DT{b_2^\prime} \LDT{\Delta_3} \LDT{\Delta_4}} \cdot \DT{y_1} \DT{y_2} \\
	& \cdot 
	\underline{\DT{c_1} \DT{c_1^\prime} \DT{c_2} \DT{c_2^\prime} \LDT{\Delta_2} \LDT{\Delta_3}} \cdot \DT{z_1} \DT{z_2} \cdot 
	\underline{\DT{d_1} \DT{d_1^\prime} \DT{d_2} \DT{d_2^\prime} \LDT{\Delta_1} \LDT{\Delta_2}}
	\cdot \DT{w_1} \DT{w_2}
= 1. \notag
\end{align}
The most essential feature of this relation is that it contains Dehn twists along four bounding quadruples, as singled out in Figure~\ref{fig:Genus9_BoundingQuadruples}, such that the subsurfaces they cobound are diffeomorphic to $\Sigma_3^4$ with pairs of $\Delta_i$ as genus--$1$ bounding pairs in each $\Sigma_3^4$. 

Now, let us examine the configuration of the curves 
$a_1,a_1^\prime,a_2,a_2^\prime, \Delta_1, \Delta_4$.
Let $S_3$ be the subsurface bounded by the quadruple $a_1,a_1^\prime,a_2,a_2^\prime$ that contains $\Delta_1$ and $\Delta_4$. Then it is easy to observe that $S_3$ is diffeomorphic to $\Sigma_3^4$ and the pair $\Delta_1, \Delta_4$ splits $S_3$ into two genus--$1$ subsurfaces with the boundary components $a_1, a_1^\prime, \Delta_1, \Delta_4$ and $a_2, a_2^\prime, \Delta_1, \Delta_4$, respectively.
Turning to Figure~\ref{fig:Genus3LP_4BoundingPairs} we note that the pair $d,d^\prime$ also splits $\Sigma_3^4$ into two genus--$1$ subsurfaces with the boundary components $\delta_1, \delta_2, d, d^\prime$ and $\delta_3, \delta_4, d, d^\prime$, respectively. This means that the tuple of curves $(a_1,a_1^\prime,a_2,a_2^\prime, \Delta_1, \Delta_4$) in $S_3$ is topologically equivalent to $(\delta_1, \delta_2, \delta_3, \delta_4, d,d^\prime)$ in $\Sigma_3^4$.
Therefore there exists an embedding $\Phi_3$ of $\Sigma_3^4$ into $\Sigma_9$ such that 
\begin{itemize}
	\item $\Phi_3$ maps $(\delta_1,\delta_2,\delta_3,\delta_4,d,d^\prime)$ to $(a_1,a_1^\prime,a_2,a_2^\prime, \Delta_1, \Delta_4)$.
\end{itemize}
Identical arguments guarantee that there exist embeddings $\Phi_4, \Phi_5, \Phi_6$ of $\Sigma_3^4$ into $\Sigma_9$ such that 
\begin{itemize}
	\item $\Phi_4$ maps $(\delta_1,\delta_2,\delta_3,\delta_4,d,d^\prime)$ to $(b_1,b_1^\prime,b_2,b_2^\prime, \Delta_3, \Delta_4)$,
	\item $\Phi_5$ maps $(\delta_1,\delta_2,\delta_3,\delta_4,d,d^\prime)$ to $(c_1,c_1^\prime,c_2,c_2^\prime, \Delta_2, \Delta_3)$,
	\item $\Phi_6$ maps $(\delta_1,\delta_2,\delta_3,\delta_4,d,d^\prime)$ to $(d_1,d_1^\prime,d_2,d_2^\prime, \Delta_1, \Delta_2)$.
\end{itemize}
Now the genus--$3$ relation~\eqref{eq:Genus3LP_4BoundingPairs} (without marked points) can be rearranged as
\begin{align*}
\DT{w} \DT{a} \DT{a^\prime} \DT{x} \DT{b} \DT{b^\prime} \DT{y} \DT{c} \DT{c^\prime} \DT{z} = \DT{\delta_1} \DT{\delta_2} \DT{\delta_3} \DT{\delta_4} \LDT{d} \LDT{d^\prime}.
\end{align*}
So if we set $a_i := \Phi_i(a), a_i^\prime := \Phi_i(a^\prime), x_i := \Phi_i(x)$, and so on, then copies of this relation are embedded into $\Gamma_9$ as
\begin{align*}
& \DT{w_3} \DT{a_3} \DT{a_3^\prime} \DT{x_3} \DT{b_3} \DT{b_3^\prime} \DT{y_3} \DT{c_3} \DT{c_3^\prime} \DT{z_3} = \DT{a_1} \DT{a_1^\prime} \DT{a_2} \DT{a_2^\prime} \LDT{\Delta_1} \LDT{\Delta_4}, \\
& \DT{w_4} \DT{a_4} \DT{a_4^\prime} \DT{x_4} \DT{b_4} \DT{b_4^\prime} \DT{y_4} \DT{c_4} \DT{c_4^\prime} \DT{z_4} = \DT{b_1} \DT{b_1^\prime} \DT{b_2} \DT{b_2^\prime} \LDT{\Delta_3} \LDT{\Delta_4}, \\
& \DT{w_5} \DT{a_5} \DT{a_5^\prime} \DT{x_5} \DT{b_5} \DT{b_5^\prime} \DT{y_5} \DT{c_5} \DT{c_5^\prime} \DT{z_5} = \DT{c_1} \DT{c_1^\prime} \DT{c_2} \DT{c_2^\prime} \LDT{\Delta_2} \LDT{\Delta_3}, \\
& \DT{w_6} \DT{a_6} \DT{a_6^\prime} \DT{x_6} \DT{b_6} \DT{b_6^\prime} \DT{y_6} \DT{c_6} \DT{c_6^\prime} \DT{z_6} = \DT{d_1} \DT{d_1^\prime} \DT{d_2} \DT{d_2^\prime} \LDT{\Delta_1} \LDT{\Delta_2}.
\end{align*}
Then we can substitute those four relations into the underlined parts of the relation~\eqref{eq:Signature0LF_BeforeSubstitutions}  to obtain:
\begin{align} \label{eq:Signature0LF_schematic}
&\DT{w_3} \DT{a_3} \DT{a_3^\prime} \DT{x_3} \DT{b_3} \DT{b_3^\prime} \DT{y_3} \DT{c_3} \DT{c_3^\prime} \DT{z_3} \DT{x_1} \DT{x_2} \cdot
\DT{w_4} \DT{a_4} \DT{a_4^\prime} \DT{x_4} \DT{b_4} \DT{b_4^\prime} \DT{y_4} \DT{c_4} \DT{c_4^\prime} \DT{z_4} \DT{y_1} \DT{y_2} \\ \notag
&\cdot
\DT{w_5} \DT{a_5} \DT{a_5^\prime} \DT{x_5} \DT{b_5} \DT{b_5^\prime} \DT{y_5} \DT{c_5} \DT{c_5^\prime} \DT{z_5} \DT{z_1} \DT{z_2} \cdot
\DT{w_6} \DT{a_6} \DT{a_6^\prime} \DT{x_6} \DT{b_6} \DT{b_6^\prime} \DT{y_6} \DT{c_6} \DT{c_6^\prime} \DT{z_6} \DT{w_1} \DT{w_2}
=1.
\end{align}
This is a positive factorization of the identity in $\M(\Sigma_9)$, so it provides a genus--$9$ Lefschetz fibration $f\colon X \to S^2$. Since we only used copies of the relation~\eqref{eq:Genus3LP_4BoundingPairs}, which has signature zero,  the total space $X$ has signature zero.

\smallskip
\noindent \textit{\underline{Explicit construction}}:
We will now describe explicit embeddings $\Phi_3, \Phi_4, \Phi_5, \Phi_6$ of $\Sigma_3^4$ (with or without a marked point) into $\Sigma_{9,1}$ to obtain an explicit monodromy factorization of the genus--$9$ fibration, as well as to pin-point a pseudosection. 

\begin{figure}[htbp]
	\centering
	\includegraphics[height=105pt]{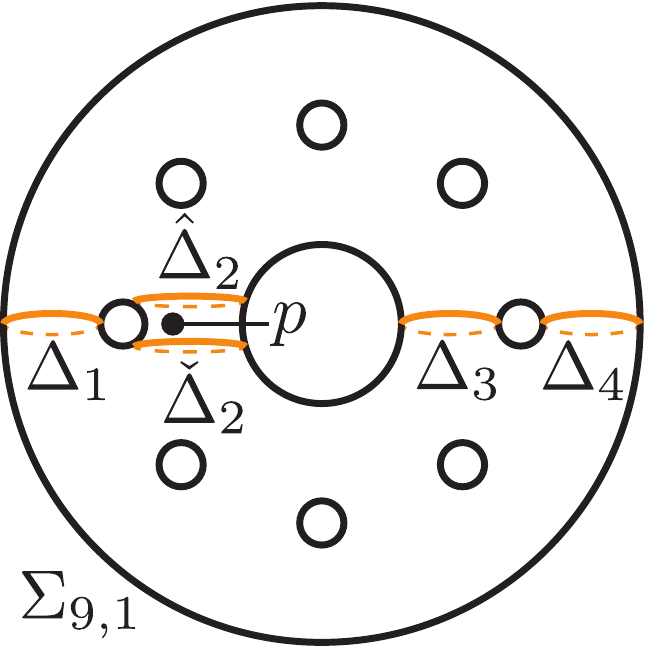}
	\caption{The curves $\Delta_1, \hat{\Delta}_2, \check{\Delta}_2, \Delta_3, \Delta_4$ on $\Sigma_{9,1}$.} 
	\label{fig:Genus9Deltas_MarkedPoint}
\end{figure}
On $\Sigma_9$ we take two parallel copies of $\Delta_2$, $\hat{\Delta}_2$ and $\check{\Delta}_2$, and add a marked point between them as in Figure~\ref{fig:Genus9Deltas_MarkedPoint}.
The first two embeddings $\Phi_1$ and $\Phi_2$ are now regarded as embeddings of $\Sigma_3^4$ into $\Sigma_{9,1}$ in a straightforward fashion, which provide the two relations
\begin{align*} 
\DT{a_1} \DT{a_1^\prime} \DT{x_1} \DT{b_1} \DT{b_1^\prime} \DT{y_1} \DT{c_1} \DT{c_1^\prime} \DT{z_1} \DT{d_1} \DT{d_1^\prime} \DT{w_1} = \DT{\Delta_1} \DT{\hat{\Delta}_2} \DT{\Delta_3} \DT{\Delta_4}, \\
\DT{a_2} \DT{a_2^\prime} \DT{x_2} \DT{b_2} \DT{b_2^\prime} \DT{y_2} \DT{c_2} \DT{c_2^\prime} \DT{z_2} \DT{d_2} \DT{d_2^\prime} \DT{w_2} = \DT{\Delta_1} \DT{\check{\Delta}_2} \DT{\Delta_3} \DT{\Delta_4}
\end{align*}
in $\M(\Sigma_{9,1})$.
Combining them as before, we get a factorization (with no point-pushing map yet):
\begin{align} \label{eq:Signature0LF_MarkedPoint_BeforeSubstitutions}
& \underline{\DT{a_1} \DT{a_1^\prime} \DT{a_2} \DT{a_2^\prime} \LDT{\Delta_1} \LDT{\Delta_4}} \cdot \DT{x_1} \DT{x_2} \cdot 
\underline{\DT{b_1} \DT{b_1^\prime} \DT{b_2} \DT{b_2^\prime} \LDT{\Delta_3} \LDT{\Delta_4}} \cdot \DT{y_1} \DT{y_2} \\
& \cdot 
\underline{\DT{c_1} \DT{c_1^\prime} \DT{c_2} \DT{c_2^\prime} \LDT{\check{\Delta}_2} \LDT{\Delta_3}} \cdot \DT{z_1} \DT{z_2} \cdot 
\underline{\DT{d_1} \DT{d_1^\prime} \DT{d_2} \DT{d_2^\prime} \LDT{\Delta_1} \LDT{\hat{\Delta}_2}} \cdot \DT{w_1} \DT{w_2}
= 1. \notag
\end{align}

We can now describe our embeddings $\Phi_3, \Phi_4, \Phi_5, \Phi_6$.
We first modify the presentation of the genus--$3$ relation~\eqref{eq:Genus3LP_4BoundingPairs} so that one of the bounding pair sits in a ``standard position'' as illustrated in Figure~\ref{fig:Genus3LP_StandardBoundingPair}. 
Take the surface $\Sigma_{3,2}^4$ in Figure~\ref{fig:Genus3LP_4BoundingPairs} and slightly slide the boundary components as indicated in Figure~\ref{fig:Genus3LP_Modification}.
Then we conjugate the relation~\eqref{eq:Genus3LP_4BoundingPairs} by the series of Dehn twists 
$$\DT{\beta} \DT{\gamma} \DT{\alpha} \DT{\beta } \DT{\gamma} \DT{\alpha} \DT{\eta}^{-1}$$
with the curves given on the right of Figure~\ref{fig:Genus3LP_Modification}.
This puts the bounding pair $d,d^\prime$ in the standard position as shown in Figure~\ref{fig:Genus3LP_StandardBoundingPair}, where the images of the other curves are also given. Here we keep using the same symbols for the curves as those before the conjugation.
\begin{figure}[htbp]
	\begin{tabular}{c}
	\begin{minipage}[t]{1\hsize}
		\centering
		\includegraphics[height=85pt]{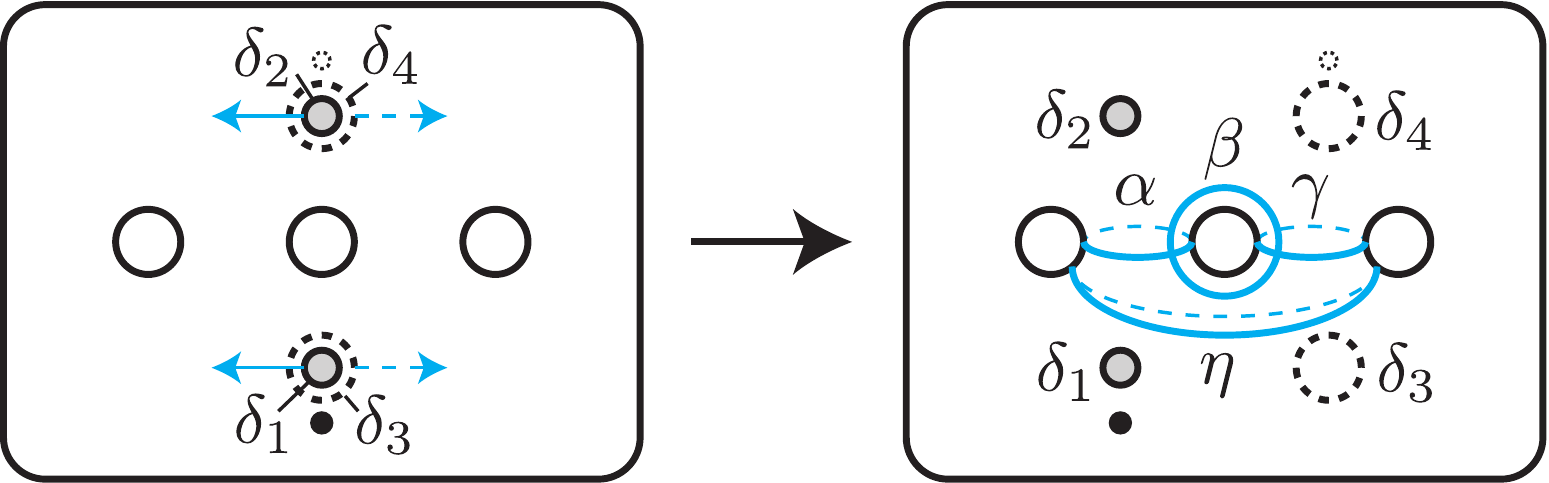}
		\caption{Sliding the boundary components and the curves for the conjugation.} 
		\label{fig:Genus3LP_Modification}
	\end{minipage} \\ 
	\vspace{1\baselineskip} \\
	\begin{minipage}[t]{1\hsize}
		\centering
		\includegraphics[height=255pt]{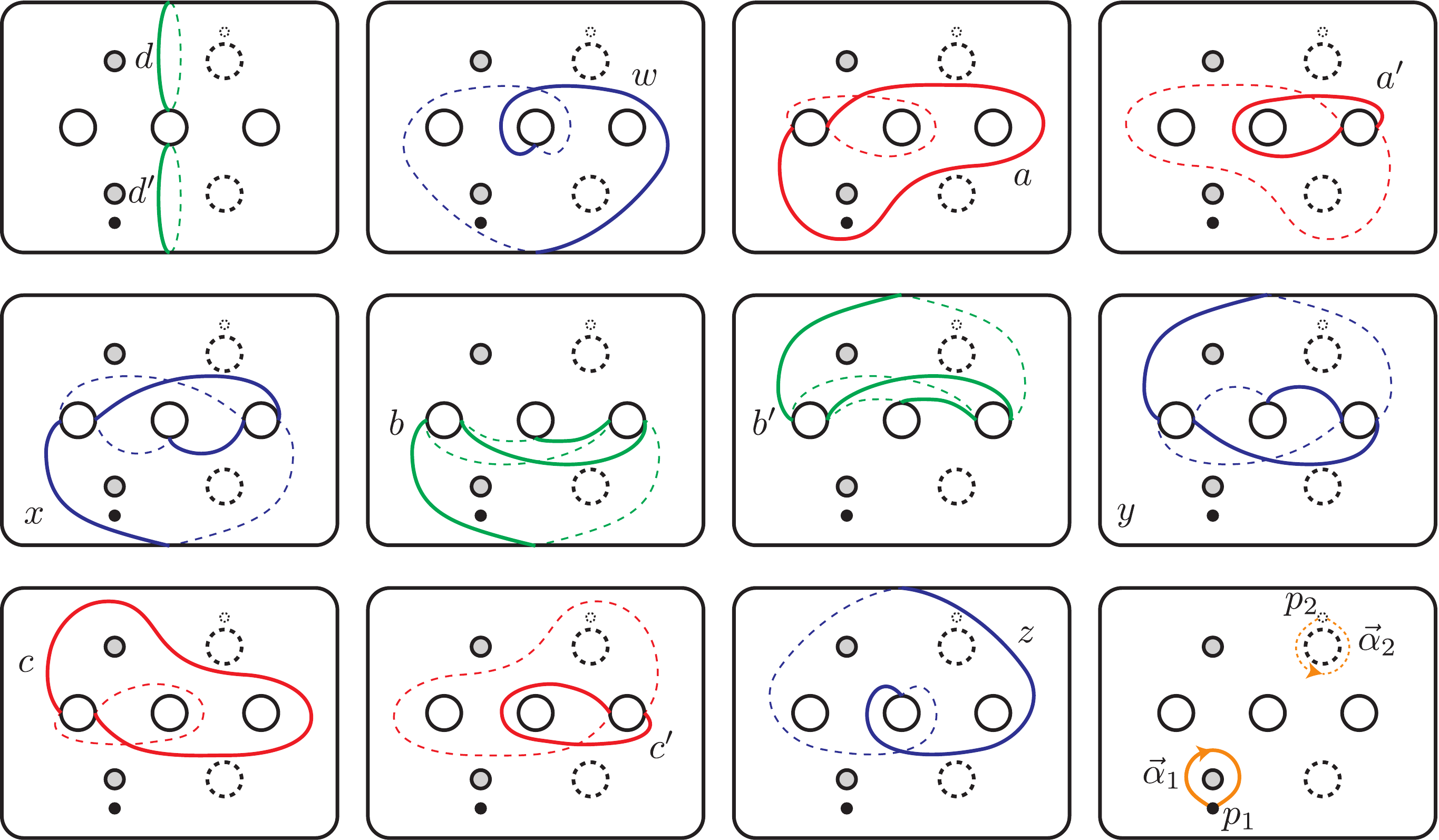}
		\caption{Monodromy curves of the genus--$3$ pencil with simpler $d$ and $d^\prime$.} 
		\label{fig:Genus3LP_StandardBoundingPair}
	\end{minipage}
	\end{tabular}
\end{figure}

By cyclic permutation and commutativity, the relation~\eqref{eq:Genus3LP_4BoundingPairs} becomes
\begin{align} \label{eq:Genus3LP_StandardBoundingPair}
\mathcal{P}_{\vec{\alpha}_2} \cdot \DT{w} \DT{a} \DT{a^\prime} \DT{x} \DT{b} \DT{b^\prime} \DT{y} \DT{c} \DT{c^\prime} \DT{z} \cdot \mathcal{P}_{\vec{\alpha}_1}  = \DT{\delta_1} \DT{\delta_2} \DT{\delta_3} \DT{\delta_4} \LDT{d} \LDT{d^\prime}.
\end{align}
We will embed the relation~\eqref{eq:Genus3LP_StandardBoundingPair} with the curves in Figure~\ref{fig:Genus3LP_StandardBoundingPair} into $\M(\Sigma_{9,1})$.
For $\Phi_3$ and $\Phi_4$ we do not need the marked points.
For $\Phi_5$ we include the marked point $p_1$ and forget $p_2$, whereas for $\Phi_6$ we keep $p_2$ and forget $p_1$. In Figures~\ref{fig:Genus9_Genus3Embedding3}--(d), 
we describe the embeddings
\begin{align*}
\Phi_3: \Sigma_3^4 \to \Sigma_{9,1}, \quad
\Phi_4: \Sigma_3^4 \to \Sigma_{9,1}, \quad
\Phi_5: \Sigma_{3,1}^4 \to \Sigma_{9,1}, \quad
\Phi_6: \Sigma_{3,1}^4 \to \Sigma_{9,1}, 
\end{align*}
as the compositions of embeddings $\tilde{\Phi}_i$ (which are easier to visualize) and diffeomorphisms $\psi_i$ of $\Sigma_{9,1}$.
The embeddings $\tilde{\Phi}_i$ are uniquely specified by describing how the gray curves that fill the genus--$3$ surfaces are embedded. The diffeomorphisms $\psi_3, \psi_4, \psi_5$ and $\psi_6$, are given by the products of Dehn twists below. Here $t_i$ means the right-handed Dehn twist along the curve labeled $i$ in the respective Figures~\ref{fig:Genus9_Genus3EmbeddingPsi3}--(d):
\begin{align*}
\psi_3 &:= t_{\check{4}}^{-1}t_{\hat{4}}^{-1} t_{\check{3}}^{-1}t_{\hat{3}}^{-1} t_{\check{2}}^{-1}t_{\hat{2}}^{-1} t_{\check{1}}^{-1}t_{\hat{1}}^{-1}, \\
\psi_4 &:= t_{11}^2 t_{\check{10}}t_{\hat{10}} t_{\check{9}}^{-1}t_{\hat{9}}^{-1} t_{\check{8}}^{-1}t_{\hat{8}}^{-1} t_{\check{5}}^{-1}t_{\hat{5}}^{-1} t_{\check{4}}^{-1}t_{\hat{4}}^{-1} t_{\check{7}}^{-1}t_{\hat{7}}^{-1} t_{\check{6}}^{-1}t_{\hat{6}}^{-1} \\
& \qquad \cdot t_{\check{3}}^{-1}t_{\hat{3}}^{-1} t_{\check{2}}^{-1}t_{\hat{2}}^{-1} t_{\check{1}}^{-1}t_{\hat{1}}^{-1} t_{\check{5}}^{-1}t_{\hat{5}}^{-1} t_{\check{4}}^{-1}t_{\hat{4}}^{-1} t_{\check{3}}^{-1}t_{\hat{3}}^{-1} t_{\check{2}}^{-1}t_{\hat{2}}^{-1} t_{\check{1}}^{-1}t_{\hat{1}}^{-1}, \\
\psi_5 &:= t_{\check{4}}t_{\hat{4}} t_{\check{3}}t_{\hat{3}} t_{\check{2}}t_{\hat{2}} t_{\check{1}}t_{\hat{1}}, \\
\psi_6 &:= t_{\check{6}}^{-1}t_{\hat{6}}^{-1} t_{\check{5}}t_{\hat{5}} t_{\check{4}}t_{\hat{4}} t_{\check{3}}^{-1}t_{\hat{3}}^{-1} t_{\check{2}}^{-1}t_{\hat{2}}^{-1} t_{\check{1}}^{-1}t_{\hat{1}}^{-1} t_{\check{3}}^{-1}t_{\hat{3}}^{-1} t_{\check{2}}^{-1}t_{\hat{2}}^{-1} t_{\check{1}}^{-1}t_{\hat{1}}^{-1}.
\end{align*} 
\begin{figure}[htbp]
	\centering
	\subfigure[The embedding $\Phi_3 = \psi_3 \circ \tilde{\Phi}_3$. \label{fig:Genus9_Genus3Embedding3}]
	{\includegraphics[height=115pt]{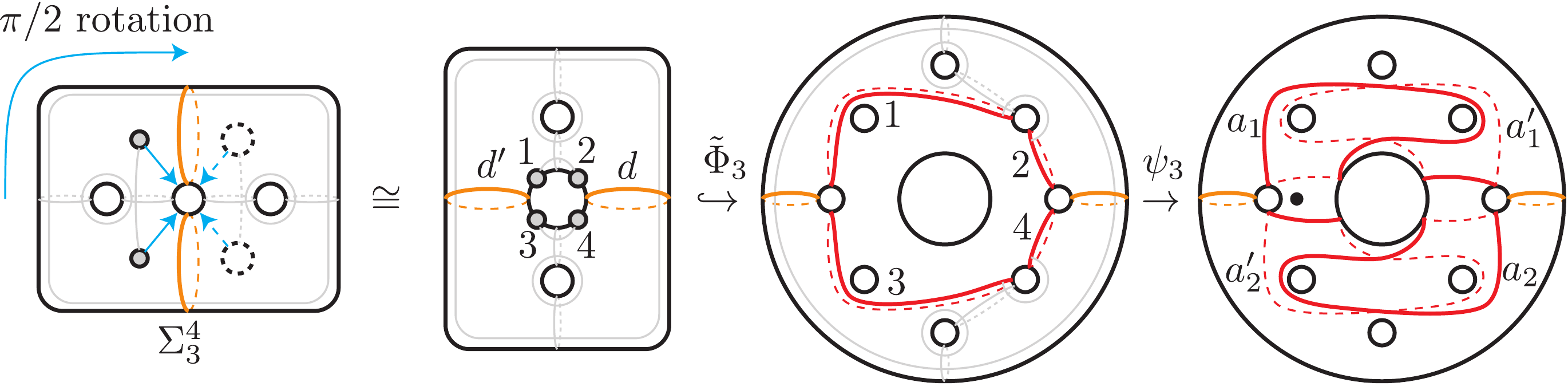}} 
	\hspace{0pt}	
	\subfigure[The embedding $\Phi_4 = \psi_4 \circ \tilde{\Phi}_4$. \label{fig:Genus9_Genus3Embedding4}]
	{\includegraphics[height=115pt]{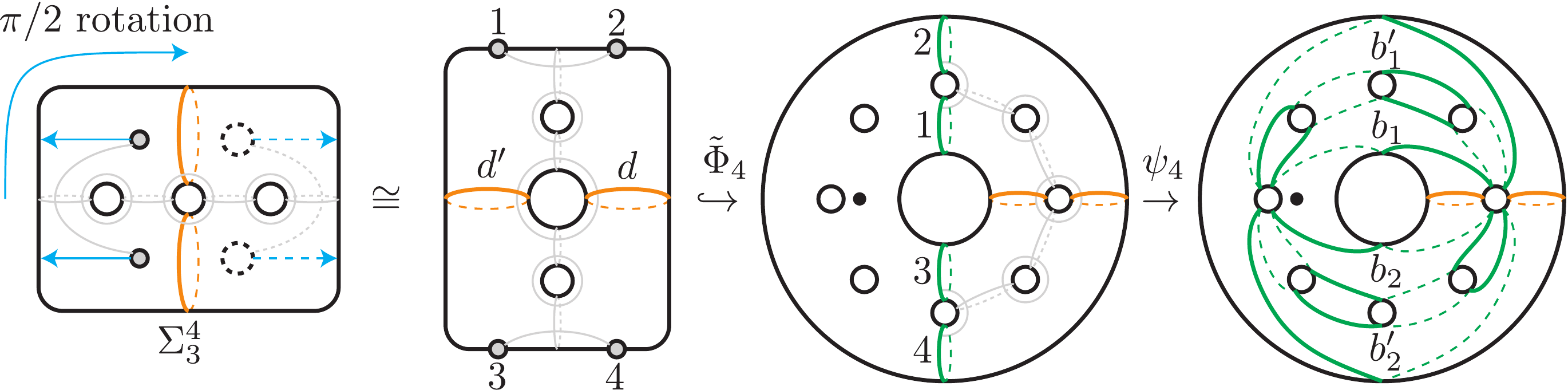}} 
	\hspace{0pt}	
	\subfigure[The embedding $\Phi_5 = \psi_5 \circ \tilde{\Phi}_5$. \label{fig:Genus9_Genus3Embedding5}]
	{\includegraphics[height=115pt]{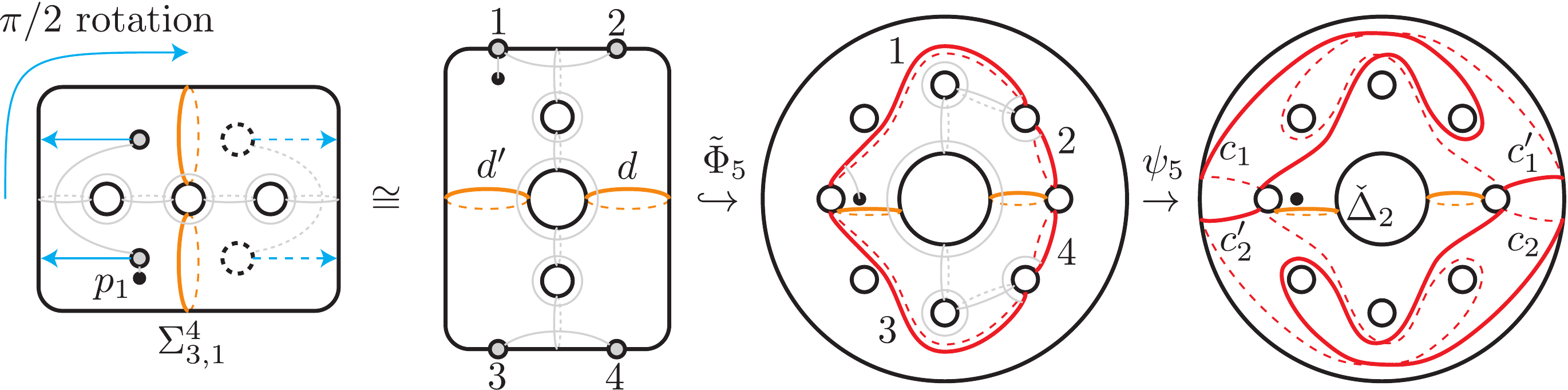}} 
	\hspace{0pt}	
	\subfigure[The embedding $\Phi_6 = \psi_6 \circ \tilde{\Phi}_6$. \label{fig:Genus9_Genus3Embedding6}]
	{\includegraphics[height=115pt]{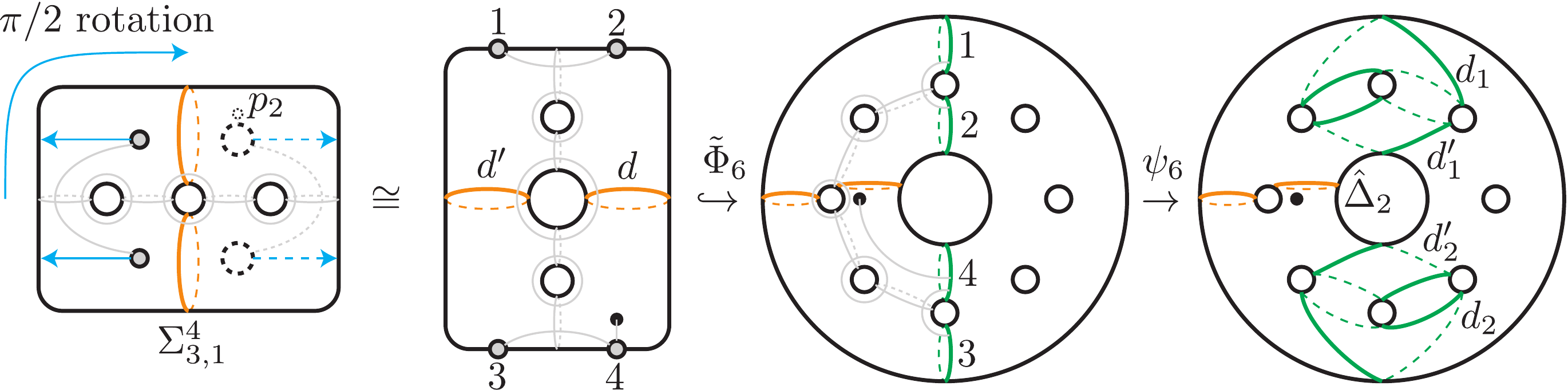}} 
	\caption{The embeddings.} 	
	\label{fig:Genus9_Genus3Embeddings}
\end{figure}
\begin{figure}[htbp]
	\centering
	\subfigure[Curves for $\psi_3$ \label{fig:Genus9_Genus3EmbeddingPsi3}]
	{\includegraphics[height=100pt]{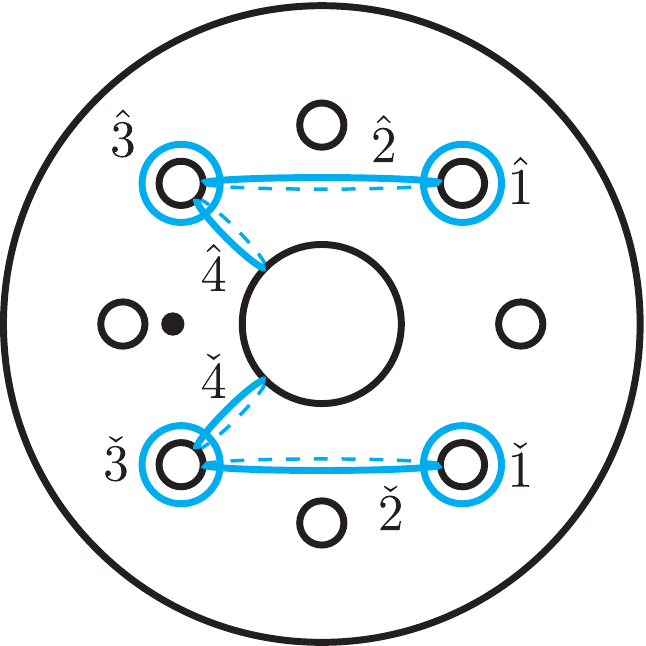}} 
	\hspace{0pt}	
	\subfigure[for $\psi_4$ \label{fig:Genus9_Genus3EmbeddingPsi4}]
	{\includegraphics[height=100pt]{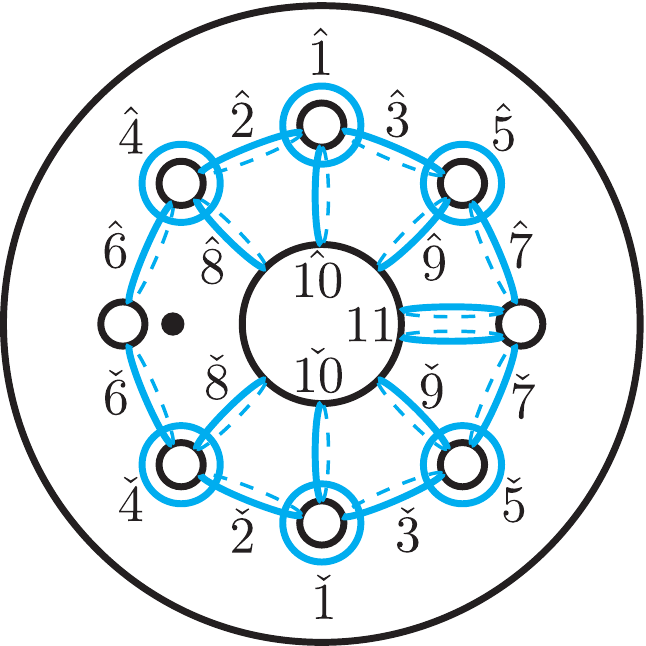}} 
	\hspace{0pt}	
	\subfigure[for $\psi_5$ \label{fig:Genus9_Genus3EmbeddingPsi5}]
	{\includegraphics[height=100pt]{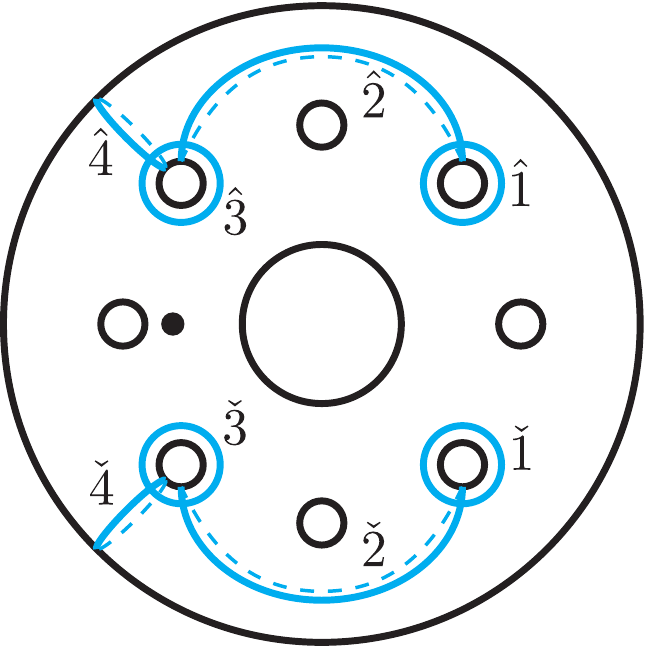}} 
	\hspace{0pt}	
	\subfigure[for $\psi_6$ \label{fig:Genus9_Genus3EmbeddingPsi6}]
	{\includegraphics[height=100pt]{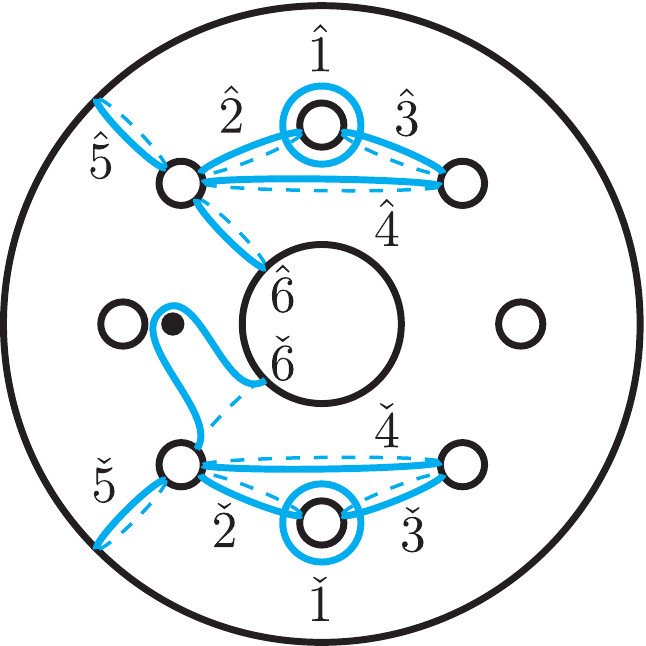}} 
	\caption{Dehn twist curves for the diffeomorphisms $\psi_3, \psi_4, \psi_5, \psi_6$.} 	
	\label{fig:Genus9_Genus3EmbeddingPsi's}
\end{figure}
So we have
\begin{itemize}
	\item $\Phi_3$ maps $(\delta_1,\delta_2,\delta_3,\delta_4,d,d^\prime)$ to $(a_1,a_1^\prime,a_2^\prime, a_2, \Delta_4, \Delta_1)$,
	\item $\Phi_4$ maps $(\delta_1,\delta_2,\delta_3,\delta_4,d,d^\prime)$ to $(b_1,b_1^\prime,b_2,b_2^\prime,  \Delta_4, \Delta_3)$,
	\item $\Phi_5$ maps $(\delta_1,\delta_2,\delta_3,\delta_4,d,d^\prime)$ to $(c_1,c_1^\prime,c_2^\prime, c_2, \Delta_3, \check{\Delta}_2)$,
	\item $\Phi_6$ maps $(\delta_1,\delta_2,\delta_3,\delta_4,d,d^\prime)$ to $(d_1,d_1^\prime,d_2,d_2^\prime, \hat{\Delta}_2, \Delta_1)$.
\end{itemize}
Under these embeddings, the modified genus--$3$ relation~\eqref{eq:Genus3LP_StandardBoundingPair} yields
\begin{align*}
\DT{w_3} \DT{a_3} \DT{a_3^\prime} \DT{x_3} \DT{b_3} \DT{b_3^\prime} \DT{y_3} \DT{c_3} \DT{c_3^\prime} \DT{z_3} &= \DT{a_1} \DT{a_1^\prime} \DT{a_2} \DT{a_2^\prime} \LDT{\Delta_1} \LDT{\Delta_4}, \\
\DT{w_4} \DT{a_4} \DT{a_4^\prime} \DT{x_4} \DT{b_4} \DT{b_4^\prime} \DT{y_4} \DT{c_4} \DT{c_4^\prime} \DT{z_4} &= \DT{b_1} \DT{b_1^\prime} \DT{b_2} \DT{b_2^\prime} \LDT{\Delta_3} \LDT{\Delta_4}, \\
\DT{w_5} \DT{a_5} \DT{a_5^\prime} \DT{x_5} \DT{b_5} \DT{b_5^\prime} \DT{y_5} \DT{c_5} \DT{c_5^\prime} \DT{z_5} \cdot \mathcal{P}_{\vec{\alpha}_1} &= \DT{c_1} \DT{c_1^\prime} \DT{c_2} \DT{c_2^\prime} \LDT{\check{\Delta}_2} \LDT{\Delta_3}, \\
\mathcal{P}_{\vec{\alpha}_2} \cdot \DT{w_6} \DT{a_6} \DT{a_6^\prime} \DT{x_6} \DT{b_6} \DT{b_6^\prime} \DT{y_6} \DT{c_6} \DT{c_6^\prime} \DT{z_6} &= \DT{d_1} \DT{d_1^\prime} \DT{d_2} \DT{d_2^\prime} \LDT{\Delta_1} \LDT{\hat{\Delta}_2}.
\end{align*}
The images of the curves under these embeddings are given in Figures~\ref{fig:SignatureZeroLF_VanishingCycles}.
In addition, the oriented loops $\vec{\alpha}_1, \vec{\alpha}_2$ for the point-pushing maps are given in Figure~\ref{fig:Genus9_PushMaps}.
By substituting all into the relation~\eqref{eq:Signature0LF_MarkedPoint_BeforeSubstitutions},
we get
\begin{align*} 
&\DT{w_3} \DT{a_3} \DT{a_3^\prime} \DT{x_3} \DT{b_3} \DT{b_3^\prime} \DT{y_3} \DT{c_3} \DT{c_3^\prime} \DT{z_3} \DT{x_1} \DT{x_2} \cdot
\DT{w_4} \DT{a_4} \DT{a_4^\prime} \DT{x_4} \DT{b_4} \DT{b_4^\prime} \DT{y_4} \DT{c_4} \DT{c_4^\prime} \DT{z_4} \DT{y_1} \DT{y_2} \\ \notag
&\cdot
\DT{w_5} \DT{a_5} \DT{a_5^\prime} \DT{x_5} \DT{b_5} \DT{b_5^\prime} \DT{y_5} \DT{c_5} \DT{c_5^\prime} \DT{z_5} \cdot \underline{\mathcal{P}_{\vec{\alpha}_1} \cdot \DT{z_1} \DT{z_2} \cdot \mathcal{P}_{\vec{\alpha}_2}} \cdot
\DT{w_6} \DT{a_6} \DT{a_6^\prime} \DT{x_6} \DT{b_6} \DT{b_6^\prime} \DT{y_6} \DT{c_6} \DT{c_6^\prime} \DT{z_6} \DT{w_1} \DT{w_2}
=1. 
\end{align*}
Here $z_1$ and $z_2$ are disjoint, $\vec{\alpha}_1$ and $z_2$ are disjoint, and $z_1$ and $\vec{\alpha}_2$ are disjoint, so we get
\begin{align*}
\mathcal{P}_{\vec{\alpha}_1} \cdot \DT{z_1} \DT{z_2} \cdot \mathcal{P}_{\vec{\alpha}_2} 
= \mathcal{P}_{\vec{\alpha}_1} \cdot \DT{z_2} \DT{z_1} \cdot \mathcal{P}_{\vec{\alpha}_2} 
= \DT{z_2} \cdot \mathcal{P}_{\vec{\alpha}_1} \mathcal{P}_{\vec{\alpha}_2} \cdot \DT{z_1}.
\end{align*}
Then the  product $\mathcal{P}_{\vec{\alpha}_1} \mathcal{P}_{\vec{\alpha}_2}$ is equal to the single point-pushing map along $\vec{\alpha}$, which is homotopic to the concatenation $\vec{\alpha}_2 \cdot \vec{\alpha}_1$.
\begin{figure}[htbp]
	\centering
	\includegraphics[height=100pt]{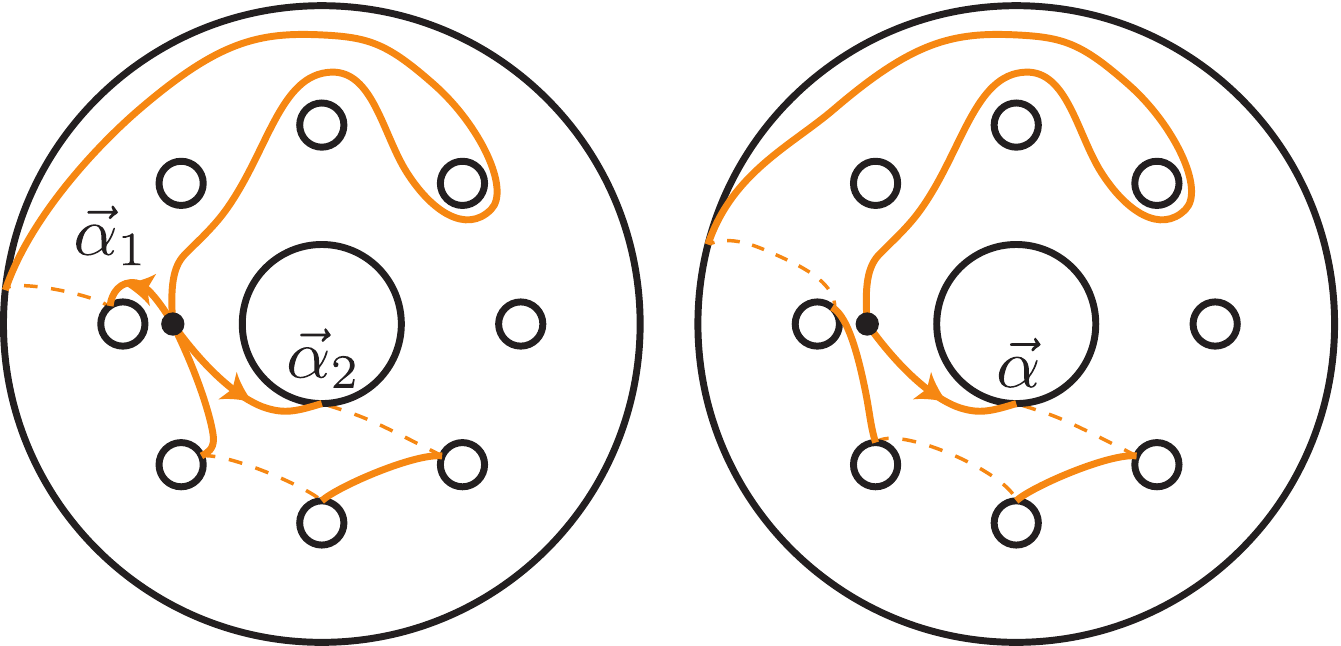}
	\caption{Curves of the point-pushing maps.} 
	\label{fig:Genus9_PushMaps}
\end{figure}%

Hence, we obtain the factorization in $\M(\Sigma_{9,1})$: 
\begin{align} \label{eq:Signature0LF_Explicit}
 \DT{w_3} \DT{a_3} \DT{a_3^\prime} \DT{x_3} \DT{b_3} \DT{b_3^\prime} \DT{y_3} \DT{c_3} \DT{c_3^\prime} \DT{z_3} \DT{x_1} \DT{x_2} \cdot
\DT{w_4} \DT{a_4} \DT{a_4^\prime} \DT{x_4} \DT{b_4} \DT{b_4^\prime} \DT{y_4} \DT{c_4} \DT{c_4^\prime} \DT{z_4} \DT{y_1} \DT{y_2}  & \\ 
\notag
\cdot 
\DT{w_5} \DT{a_5} \DT{a_5^\prime} \DT{x_5} \DT{b_5} \DT{b_5^\prime} \DT{y_5} \DT{c_5} \DT{c_5^\prime} \DT{z_5} \cdot \DT{z_2} \cdot \mathcal{P}_{\vec{\alpha}} \cdot \DT{z_1} \cdot
\DT{w_6} \DT{a_6} \DT{a_6^\prime} \DT{x_6} \DT{b_6} \DT{b_6^\prime} \DT{y_6} \DT{c_6} \DT{c_6^\prime} \DT{z_6} \DT{w_1} \DT{w_2}
&=& 1,
\end{align}
where all the Dehn twist curves are as in Figure~\ref{fig:SignatureZeroLF_VanishingCycles}.
Forgetting the marked point (and thus the point-pushing map in the factorization), we get an explicit monodromy factorization of our signature zero Lefschetz fibration. 

\begin{figure}[htbp]
	\centering
	\includegraphics[height=600pt]{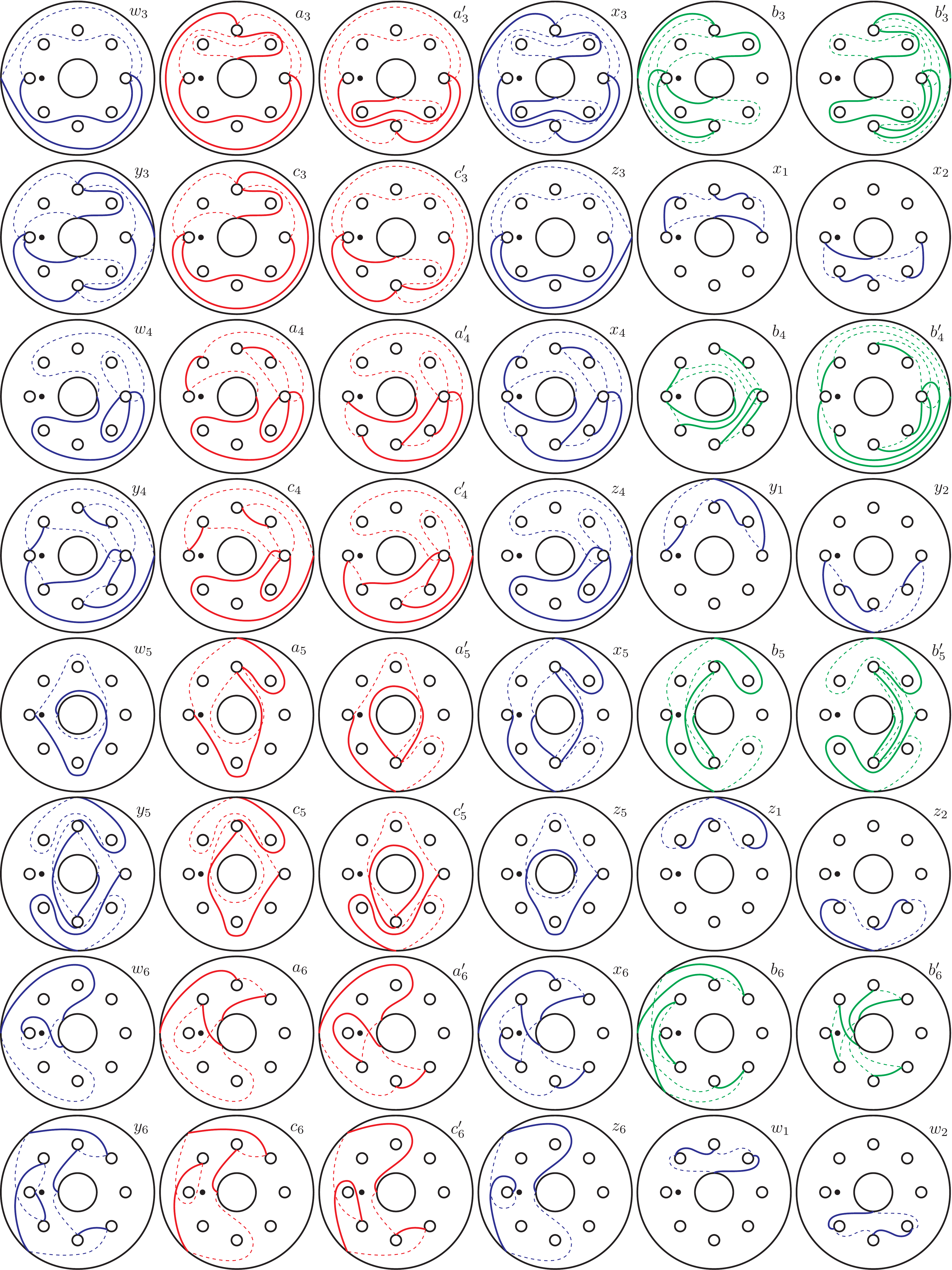}
	\caption{Vanishing cycles of the genus--$9$ Lefschetz fibration with signature zero.} 
	\label{fig:SignatureZeroLF_VanishingCycles}
\end{figure}

\bigskip
\section{Proof of Theorem~A}

Here is an outline for our proof of Theorem~A: Since the novel genus--$9$ Lefschetz fibration $(X,f)$ we built in the previous section will serve as one of the main building blocks of our constructions, we will first analyze the algebraic and differential topology of $X$ in some detail. We will then first prove all the statements for genus--$9$ fibrations, deferring the construction of higher genera examples till the end. We will first construct Lefschetz fibrations with prescribed signatures, and then the spin ones. All will be done by taking products of conjugated positive factorizations of the identity in $\M(\Sigma_g)$ (corresponding to twisted fiber sums of the fibrations) and breedings (one of which corresponds to lantern substitution, thus the rational blowdown), which will require extra care in the spin case.

\subsection{The topology of the signature zero genus-$9$ Lefschetz fibration} \label{topofkeyLF}  \

The essential information we need for later arguments is summed up as follows:

\begin{theorem} \label{KeyLF}
There is a symplectic genus--$9$ Lefschetz fibration $(X,f)$ with \mbox{monodromy factorization}
\begin{align} \label{eq:Signature0LF}
&\DT{w_3} \DT{a_3} \DT{a_3^\prime} \DT{x_3} \DT{b_3} \DT{b_3^\prime} \DT{y_3} \DT{c_3} \DT{c_3^\prime} \DT{z_3} \DT{x_1} \DT{x_2} 
\DT{w_4} \DT{a_4} \DT{a_4^\prime} \DT{x_4} \DT{b_4} \DT{b_4^\prime} \DT{y_4} \DT{c_4} \DT{c_4^\prime} \DT{z_4} \DT{y_1} \DT{y_2} \\ \notag
&\cdot
\DT{w_5} \DT{a_5} \DT{a_5^\prime} \DT{x_5} \DT{b_5} \DT{b_5^\prime} \DT{y_5} \DT{c_5} \DT{c_5^\prime} \DT{z_5} \DT{z_2}  \DT{z_1} 
\DT{w_6} \DT{a_6} \DT{a_6^\prime} \DT{x_6} \DT{b_6} \DT{b_6^\prime} \DT{y_6} \DT{c_6} \DT{c_6^\prime} \DT{z_6} \DT{w_1} \DT{w_2}
=1,
\end{align}
in $\mathrm{Mod}(\Sigma_9)$, where the Dehn twist curves are as in Figure~\ref{fig:SignatureZeroLF_VanishingCycles}. The total space $X$ is a spin symplectic $4$--manifold of general type, which is not deformation equivalent to any compact complex surface, where $\eu(X)=16$, $\sigma(X)=0$, and $H_1(X)=\Z^7 \oplus \Z_4 \oplus \Z_2$. 
\end{theorem}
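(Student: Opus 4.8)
The plan is to read off the topological invariants of $X$ directly from the monodromy factorization~\eqref{eq:Signature0LF} and the curve picture of Figure~\ref{fig:SignatureZeroLF_VanishingCycles}, and then feed them into the structural results of Section~\ref{sec:spin}. First, since~\eqref{eq:Signature0LF} is a product of $48$ positive Dehn twists and $g=9$, we get $\eu(X)=2\,\eu(S^2)(1-g)+48=-32+48=16$. For the signature I will not recompute from scratch but invoke the construction of Section~3 together with the Endo--Nagami algorithm~\cite{EndoNagami}: the signature is additive over the elementary relations assembling~\eqref{eq:Signature0LF} and is insensitive to Hurwitz moves, cyclic permutations, commutations, conjugations, braid relations and cancellations; as $(X,f)$ is built from $12$ $2$--chain relations (contributing $-7$ each) and $84$ lantern relations (contributing $+1$ each), $\sigma(X)=12(-7)+84(+1)=0$. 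In particular $b_2(X)=\eu(X)-2+2b_1(X)=28$ and $b_2^+(X)=b_2^-(X)=14$ once $b_1(X)=7$ is known.

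Next I turn to the fiber class and the first homology. Using the lift~\eqref{eq:Signature0LF_Explicit} in $\mathrm{Mod}(\Sigma_{9,1})$, whose point--pushing factor is carried by the single oriented loop $\vec{\alpha}$ of Figure~\ref{fig:Genus9_PushMaps}, I will apply Proposition~\ref{divisibility}: the divisibility of $[F]$ is the least $d$ for which $d[\alpha]$ vanishes in $H_1(\Sigma_9;\Z)/\langle\gamma_1,\dots,\gamma_{48}\rangle$, where $\gamma_i$ is the class of the $i$--th vanishing cycle. Writing the $\gamma_i$ and $[\alpha]$ in a fixed symplectic basis of $H_1(\Sigma_9;\Z)$ (read from Figures~\ref{fig:SignatureZeroLF_VanishingCycles} and~\ref{fig:Genus9_PushMaps}) and computing over $\Z$, I expect to find $[\alpha]\in\langle\gamma_1,\dots,\gamma_{48}\rangle$, so that $d=1$ and $[F]$ is primitive. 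Then, as in the proof of Proposition~\ref{divisibility}, $H_1(X;\Z)=H_1(\Sigma_9;\Z)/\langle\gamma_1,\dots,\gamma_{48}\rangle$, and a Smith normal form computation of the resulting $18\times 48$ integer presentation matrix yields $H_1(X;\Z)\cong\Z^7\oplus\Z_4\oplus\Z_2$.

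The remaining properties then follow quickly. For spin: $[F]$ is primitive and $\sigma(X)=0\equiv 0\pmod{16}$, so by Theorem~\ref{SpinLF2} it suffices to produce a quadratic form $q\colon H_1(\Sigma_9;\Z_2)\to\Z_2$ with $q(\gamma_i)=1$ for all $i$ and $\mathrm{Arf}(q)=1$; since each of the six embedded copies of the genus--$3$ relation~\eqref{eq:Genus3LP_4BoundingPairs} lies in a subsurface on which the spin structure of the genus--$3$ pencil total space (which is spin, being homeomorphic to $T^4$) restricts, such a $q$ exists, and I will exhibit it on a symplectic basis, verify $q(\gamma_i)=1$ from Figure~\ref{fig:SignatureZeroLF_VanishingCycles}, and compute its Arf invariant to be $1$ (the freedom to take $\mathrm{Arf}=1$ comes from $b_1(X)>0$, cf.\ the Remark after Theorem~\ref{SpinLF2}). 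For symplecticity: $(X,f)$ has a pseudosection $S$ with $S\cdot F=1$ (built from $\vec{\alpha}$, as in the proof of Proposition~\ref{divisibility}), hence $[F]\neq 0$ in $H_2(X;\R)$ and the Gompf--Thurston construction~\cite{GompfStipsicz} gives a symplectic form with symplectic fibers. Since every vanishing cycle in Figure~\ref{fig:SignatureZeroLF_VanishingCycles} is essential and non-separating, $f$ is relatively minimal, and as $b_1(X)=7\neq 0$ and $b_2^+(X)=14>1$, $X$ is neither rational nor ruled, hence minimal by Usher's minimality theorem for relatively minimal Lefschetz fibrations (cf.~\cite{BaykurMinimality}). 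Then $c_1^2(X)=2\eu(X)+3\sigma(X)=32>0$, so by the classification of minimal symplectic $4$--manifolds according to symplectic Kodaira dimension, $X$ is of general type. Finally, a compact complex surface with odd first Betti number is non-Kähler, and by the Enriques--Kodaira classification every non-Kähler compact complex surface has $c_1^2\leq 0$; since $c_1^2=2\eu+3\sigma$ is a diffeomorphism invariant, $c_1^2(X)=32>0$ while $b_1(X)=7$ is odd, $X$ is not diffeomorphic to --- a fortiori not deformation equivalent to --- any compact complex surface.

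The crux of the argument is the primitivity of $[F]$: it forces one to carry the point--pushing loop $\vec{\alpha}$ through the entire three--step breeding construction of Section~3 and to check, via Proposition~\ref{divisibility}, that $[\alpha]$ lies in the span of the $48$ vanishing-cycle classes in $H_1(\Sigma_9;\Z)$. Everything else --- $\eu$, $\sigma$, the Smith form for $H_1$, the quadratic form for spin, minimality, and Kodaira dimension --- is then routine linear algebra over $\Z$ and $\Z_2$ or a direct application of the results recalled above.
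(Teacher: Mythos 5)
Your proposal is correct and follows essentially the same route as the paper: the same Euler characteristic count, the same Endo--Nagami tally of $12$ two-chain and $84$ lantern relations for $\sigma=0$, the same use of Proposition~\ref{divisibility} (carrying the point-pushing loop $\vec{\alpha}$ through the construction to get primitivity of $[F]$ and the presentation of $H_1(X)$), the same appeal to Theorem~\ref{SpinLF2} with an explicitly exhibited Arf-invariant-$1$ quadratic form, and the same $b_1$-odd obstruction to being complex. The only cosmetic divergences are that the paper deduces minimality directly from spinness (an even intersection form admits no $(-1)$-classes) rather than via relative minimality plus non-rational/ruledness, and phrases the non-complex argument via ``general type surfaces are K\"ahler'' rather than ``non-K\"ahler surfaces have $c_1^2\le 0$''; both variants are valid.
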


\begin{proof}
The Lefschetz fibration $(X,f)$ prescribed by the positive factorization~\eqref{eq:Signature0LF} we have constructed in the previous section, is a symplectic fibration with respect to a Gompf-Thurston symplectic form $\omega$ we can equip it with. While it will take some effort to prove that $X$ is spin, all other claims on the differential topology of $X$ follow from our claims on its algebraic topology: For $X$ is spin, it is obviously minimal, and \mbox{$c_1^2(X)=2 \eu(X) + 3 \sigma(X)= 32 >0$,} so the Kodaira dimension of the symplectic $4$--manifold $(X, \omega)$ is $\kappa(X)=2$, in other words, it is of \emph{general type}. Because $b_1(X)$ is odd, $X$ is not even homotopy equivalent to a compact complex surface of general type, which are all known to be  K\"{a}hler. 

\smallskip
We calculate the algebraic invariants of $X$ next.

\noindent{\textit{\underline{Euler characteristic and signature}}:}
The Euler characteristic of $X$ is the easiest to calculate by the formula $\eu(X)=4-4g +n$, where the genus of the Lefschetz fibration $g=9$, and the number of nodes, corresponding to the Dehn twists in the monodromy factorization, is $n=48$. 

Although the calculation of the signature of a Lefschetz fibration usually requires computer assistance to run an algorithm, we  leveraged the fact that we have built the monodromy factorization~\eqref{eq:Signature0LF} for $(X,f)$ from scratch, using only basic relations in the mapping class group. Thanks to the work of Endo and Nagami \cite{EndoNagami}, we easily calculate the signature of $X$ by an algebraic count of the relations we have employed to derive the final positive factorization of the identity in $\M(\Sigma_9)$. Embeddings of relations into higher genera surfaces, cancellations of positive and negative Dehn twists, and Hurwitz moves do not affect the signature calculation. Since we built our genus--$9$ factorization through embeddings of the signature zero genus--$3$ relation, cancellations and Hurwitz moves, we conclude that $\sigma(X)=0$. 

Note that we calculated the signature of our genus--$2$ and genus--$3$ pencils in the same way, recalling that every use of the $2$--chain and the lantern relation contributes $-7$ and $+1$ to the signature count \cite{EndoNagami}. Algebraically, we used one $2$--chain and seven lantern relations to derive the monodromy factorization of the genus--$2$ pencil, whereas these numbers are doubled for the \mbox{genus--$3$} pencil. In turn, a total of $12$ $2$--chain and $84$ lantern relations yield the positive factorization for $(X,f)$. (These large numbers might demonstrate why it is more feasible to build such a relation in multiple steps via breedings.)

\smallskip
\noindent{\textit{\underline{First homology}}:}
The total space of a genus--$g$ Lefschetz fibration over the $2$--sphere has a handle decomposition  $(D^2 \times \Sigma_g) \cup \, \sum_i  h_i \, \cup (D^2 \times \Sigma_g)$, where $h_i$ are $2$--handles attached along the loops on $\Sigma_g$, which are the Dehn twist curves $c_i$ in the monodromy factorization of the fibration  \cite{GompfStipsicz}. Therefore, the first homology of the total space can be calculated by taking a quotient $H_1(\Sigma_g)$, as the first $D^2 \x \Sigma_g$ contains all the $1$--handles, by the abelianized relations induced by the attaching circles of all the $2$--handles, which are the vanishing cycles $c_i$, and the attaching circle of the last $2$--handle coming from the second $D^2 \times \Sigma_g$ in the above decomposition. 

Accordingly, we will calculate $H_1(X)$ using the monodromy factorization of our genus--$9$ fibration. Let $\{\alpha_i, \beta_i\}$ be the homology generators of $H_1(\Sigma_9)$, represented by the  loops in Figure~\ref{fig:Genus9_H1Basis}. 
After picking an auxiliary orientation on each Dehn twist curve in Figure~\ref{fig:SignatureZeroLF_VanishingCycles}, and calculating the number of its algebraic intersections with $\{\alpha_i, \beta_i\}$, we can easily read off the relation induced by the corresponding vanishing cycle. We tabulate this data below in a way the coefficients are easily visible, as it will be vital to our arguments for determining the spin type of $X$ as well. 
\begin{figure}[htbp]
	\centering
	\includegraphics[height=140pt]{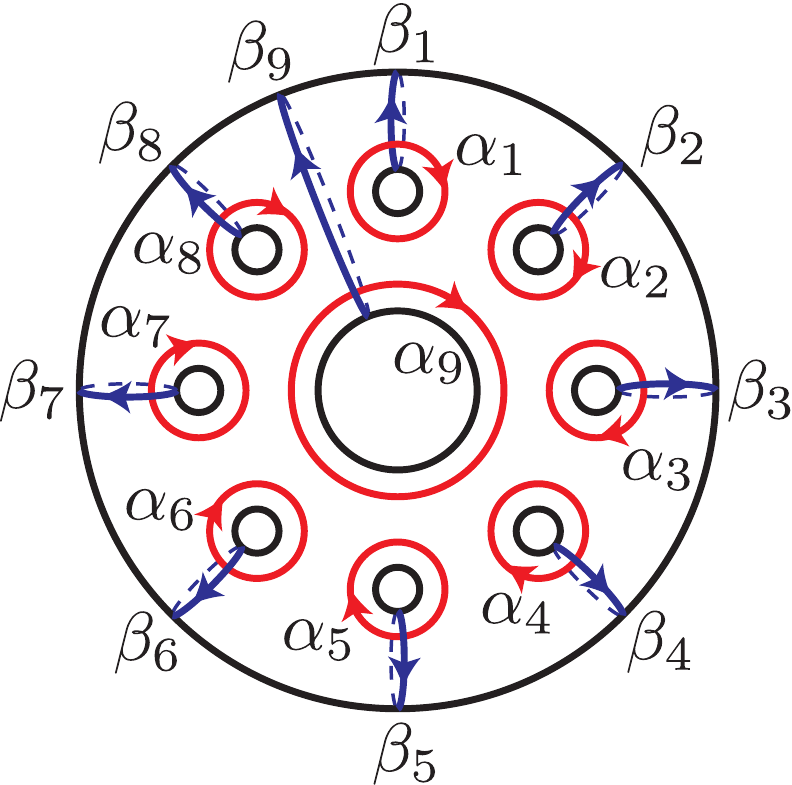}
	\caption{The symplectic basis for $H_1(\Sigma_9;\mathbb{Z})$.} 
	\label{fig:Genus9_H1Basis}
\end{figure}

\medskip
\noindent \textit{Homology classes of the vanishing cycles:} 
\begin{alignat*}{20}
	w_3 &= \alpha_1 & +2\alpha_2 & +2\alpha_3 & +2\alpha_4 & +\alpha_5 & +2\alpha_6 & +2\alpha_7 & +2\alpha_8 & +2\alpha_9 
	&  &  & +2\beta_3 &  &  &  & -\beta_7 &  &  ,\\
	a_3 &= \alpha_1 & +3\alpha_2 & +2\alpha_3 & +2\alpha_4 & +\alpha_5 & +2\alpha_6 & +2\alpha_7 & +3\alpha_8 & +2\alpha_9 
	&  &  & +2\beta_3 &  &  &  & -\beta_7 &  & -\beta_9 ,\\
	a_3^\prime &= \alpha_1 & +2\alpha_2 & +2\alpha_3 & +3\alpha_4 & +\alpha_5 & +3\alpha_6 & +2\alpha_7 & +2\alpha_8 & +2\alpha_9 
	& &  & +2\beta_3 &  &  &  & -\beta_7 &  & -\beta_9 ,\\
	x_3 &= \alpha_1 & +3\alpha_2 & +2\alpha_3 & +3\alpha_4 & +\alpha_5 & +3\alpha_6 & +2\alpha_7 & +3\alpha_8 & +2\alpha_9 
	& & & +2\beta_3 &  &  &  & -\beta_7 &  & -2\beta_9 ,\\
	b_3 &=  & \alpha_2 &  & +\alpha_4 &  & +\alpha_6 &  & +\alpha_8 &  
	& &  &  &  &  &  & +\beta_7 &  & -2\beta_9 ,\\
	b_3^\prime &=  & \alpha_2 &  & +\alpha_4 &  & +\alpha_6 &  & +\alpha_8 & 
	& &  & +\beta_3 &  &  &  &  &  & -2\beta_9 ,\\
	y_3 &= \alpha_1 & +\alpha_2 & +2\alpha_3 & +\alpha_4 & +\alpha_5 & +\alpha_6 & +2\alpha_7 & +\alpha_8 & +2\alpha_9 
	&  &  & +\beta_3 &  &  &  & -2\beta_7 &  & +2\beta_9 ,\\
	c_3 &= \alpha_1 & +\alpha_2 & +2\alpha_3 & +2\alpha_4 & +\alpha_5 & +2\alpha_6 & +2\alpha_7 & +\alpha_8 & +2\alpha_9 
	&  &  & +\beta_3 &  &  &  & -2\beta_7 &  & +\beta_9 ,\\
	c_3^\prime &= \alpha_1 & +2\alpha_2 & +2\alpha_3 & +\alpha_4 & +\alpha_5 & +\alpha_6 & +2\alpha_7 & +2\alpha_8 & +2\alpha_9 
	&  &  & +\beta_3 &  &  &  & -2\beta_7 &  & +\beta_9 ,\\
	z_3 &= \alpha_1 & +2\alpha_2 & +2\alpha_3 & +2\alpha_4 & +\alpha_5 & +2\alpha_6 & +2\alpha_7 & +2\alpha_8 & +2\alpha_9 
	&  &  & +\beta_3 &  &  &  & -2\beta_7 &  &  ,\\
	x_1 &=  & \alpha_2 &  &  &  &  &  & +\alpha_8 &  
	& +\beta_1 & -\beta_2 & +\beta_3 &  &  &  & +\beta_7 & -\beta_8 & -\beta_9 ,\\
	x_2 &=  &  &  & \alpha_4 &  & +\alpha_6 &  &  &  
	&  &  & +\beta_3 & -\beta_4 & +\beta_5 & -\beta_6 & +\beta_7 &  & -\beta_9 ,\\
	w_4 &= \alpha_1 & +2\alpha_2 & +2\alpha_3 & +2\alpha_4 & +\alpha_5 & +\alpha_6 &  & +\alpha_8 &  
	&  &  & +\beta_3 &  &  &  &  &  & -\beta_9 ,\\
	a_4 &= \alpha_1 & +2\alpha_2 & +2\alpha_3 & +2\alpha_4 & +\alpha_5 & +\alpha_6 &  & +\alpha_8 &  
	& +\beta_1 & -\beta_2 & +\beta_3 &  &  &  & +\beta_7 & -\beta_8 & -\beta_9 ,\\
	a_4^\prime &= \alpha_1 & +2\alpha_2 & +2\alpha_3 & +2\alpha_4 & +\alpha_5 & +\alpha_6 &  & +\alpha_8 &  
	&  &  & +\beta_3 & -\beta_4 & +\beta_5 & -\beta_6 & +\beta_7 &  & -\beta_9 ,\\
	x_4 &= \alpha_1 & +2\alpha_2 & +2\alpha_3 & +2\alpha_4 & +\alpha_5 & +\alpha_6 &  & +\alpha_8 &  
	& +\beta_1 & -\beta_2 & +\beta_3 & -\beta_4 & +\beta_5 & -\beta_6 & +2\beta_7 & -\beta_8 & -\beta_9 ,\\
	b_4 &=  &  &  &  &  &  &  &  &  
	& \beta_1 & -\beta_2 & +\beta_3 & -\beta_4 & +\beta_5 & -\beta_6 & +2\beta_7 & -\beta_8 & -\beta_9 ,\\
	b_4^\prime &=  &  &  &  &  &  &  &  &  
	& \beta_1 & -\beta_2 & +\beta_3 & -\beta_4 & +\beta_5 & -\beta_6 & +2\beta_7 & -\beta_8 &  ,\\
	y_4 &= \alpha_1 & +2\alpha_2 & +2\alpha_3 & +2\alpha_4 & +\alpha_5 & +\alpha_6 &  & +\alpha_8 &  
	& -\beta_1 & +\beta_2 & -\beta_3 & +\beta_4 & -\beta_5 & +\beta_6 & -2\beta_7 & +\beta_8 &  ,\\
	c_4 &= \alpha_1 & +2\alpha_2 & +2\alpha_3 & +2\alpha_4 & +\alpha_5 & +\alpha_6 &  & +\alpha_8 &  
	& -\beta_1 & +\beta_2 & -\beta_3 &  &  &  & -\beta_7 & +\beta_8 &  ,\\
	c_4^\prime &= \alpha_1 & +2\alpha_2 & +2\alpha_3 & +2\alpha_4 & +\alpha_5 & +\alpha_6 &  & +\alpha_8 &  
	&  &  & -\beta_3 & +\beta_4 & -\beta_5 & +\beta_6 & -\beta_7 &  &  ,\\
	z_4 &= \alpha_1 & +2\alpha_2 & +2\alpha_3 & +2\alpha_4 & +\alpha_5 & +\alpha_6 &  & +\alpha_8 &  
	&  &  & -\beta_3 &  &  &  &  &  &  ,\\
	y_1 &=  & \alpha_2 &  &  &  &  &  & +\alpha_8 &  
	& -\beta_1 & +\beta_2 & -\beta_3 &  &  &  & -\beta_7 & +\beta_8 &  ,\\
	y_2 &=  &  &  & \alpha_4 &  & +\alpha_6 &  &  &  
	&  &  & -\beta_3 & +\beta_4 & -\beta_5 & +\beta_6 & -\beta_7 &  &  ,\\
	w_5 &= \alpha_1 &  &  &  & +\alpha_5 &  &  &  & +2\alpha_9 
	&  &  &  &  &  &  & -\beta_7 &  & +\beta_9 ,\\
	a_5 &= \alpha_1 & +\alpha_2 &  &  & +\alpha_5 &  &  & +\alpha_8 & +2\alpha_9 
	&  &  &  &  &  &  & -\beta_7 &  &  ,\\
	a_5^\prime &= \alpha_1 &  &  & +\alpha_4 & +\alpha_5 & +\alpha_6 &  &  & +2\alpha_9 
	&  &  &  &  &  &  & -\beta_7 &  &  ,\\
	x_5 &= \alpha_1 & +\alpha_2 &  & +\alpha_4 & +\alpha_5 & +\alpha_6 &  & +\alpha_8 & +2\alpha_9 
	&  &  &  &  &  &  & -\beta_7 &  & -\beta_9 ,\\
	b_5 &=  & \alpha_2 &  & +\alpha_4 &  & +\alpha_6 &  & +\alpha_8 &  
	&  &  &  &  &  &  & -\beta_7 &  & -\beta_9 ,\\
	b_5^\prime &=  & \alpha_2 &  & +\alpha_4 &  & +\alpha_6 &  & +\alpha_8 &  
	&  &  & -\beta_3 &  &  &  &  &  & -\beta_9 ,\\
	y_5 &= \alpha_1 & -\alpha_2 &  & -\alpha_4 & +\alpha_5 & -\alpha_6 &  & -\alpha_8 & +2\alpha_9 
	&  &  & +\beta_3 &  &  &  &  &  & +\beta_9 ,\\
	c_5 &= \alpha_1 & -\alpha_2 &  &  & +\alpha_5 &  &  & -\alpha_8 & +2\alpha_9 
	&  &  & +\beta_3 &  &  &  &  &  &  ,\\
	c_5^\prime &= \alpha_1 &  &  & -\alpha_4 & +\alpha_5 & -\alpha_6 &  &  & +2\alpha_9 
	&  &  & +\beta_3 &  &  &  &  &  &  ,\\
	z_5 &= \alpha_1 &  &  &  & +\alpha_5 &  &  &  & +2\alpha_9 
	&  &  & +\beta_3 &  &  &  &  &  & -\beta_9 ,\\
	z_1 &=  & \alpha_2 &  &  &  &  &  & +\alpha_8 &  
	& +\beta_1 & -\beta_2 &  &  &  &  &  & -\beta_8 &  ,\\
	z_2 &=  &  &  & \alpha_4 &  & +\alpha_6 &  &  &  
	&  &  &  & -\beta_4 & +\beta_5 & -\beta_6 &  &  &  ,\\
	w_6 &= \alpha_1 &  &  &  & +\alpha_5 & +\alpha_6 & +2\alpha_7 & +\alpha_8 &  
	&  &  &  &  &  &  & +\beta_7 &  & -2\beta_9 ,\\
	a_6 &= \alpha_1 &  &  &  & +\alpha_5 & +\alpha_6 & +2\alpha_7 & +\alpha_8 &  
	& -\beta_1 & +\beta_2 &  &  &  &  &  & +\beta_8 & -2\beta_9 ,\\
	a_6^\prime &= \alpha_1 &  &  &  & +\alpha_5 & +\alpha_6 & +2\alpha_7 & +\alpha_8 &  
	&  &  &  & +\beta_4 & -\beta_5 & +\beta_6 &  &  & -2\beta_9 ,\\
	x_6 &= \alpha_1 &  &  &  & +\alpha_5 & +\alpha_6 & +2\alpha_7 & +\alpha_8 &  
	& -\beta_1 & +\beta_2 &  & +\beta_4 & -\beta_5 & +\beta_6 & -\beta_7 & +\beta_8 & -2\beta_9 ,\\
	b_6 &=  &  &  &  &  &  &  &  &  
	& \beta_1 & -\beta_2 &  & -\beta_4 & +\beta_5 & -\beta_6 & +\beta_7 & -\beta_8 &  ,\\
	b_6^\prime &=  &  &  &  &  &  &  &  &  
	& \beta_1 & -\beta_2 &  & -\beta_4 & +\beta_5 & -\beta_6 & +\beta_7 & -\beta_8 & +\beta_9 ,\\
	y_6 &= \alpha_1 &  &  &  & +\alpha_5 & +\alpha_6 & +2\alpha_7 & +\alpha_8 &  
	& +\beta_1 & -\beta_2 &  & -\beta_4 & +\beta_5 & -\beta_6 & +\beta_7 & -\beta_8 & -\beta_9 ,\\
	c_6 &= \alpha_1 &  &  &  & +\alpha_5 & +\alpha_6 & +2\alpha_7 & +\alpha_8 &  
	& +\beta_1 & -\beta_2 &  &  &  &  &  & -\beta_8 & -\beta_9 ,\\
	c_6^\prime &= \alpha_1 &  &  &  & +\alpha_5 & +\alpha_6 & +2\alpha_7 & +\alpha_8 & &  &  &  & -\beta_4 & +\beta_5 & -\beta_6 &  &  & -\beta_9 ,\\
	z_6 &= \alpha_1 &  &  &  & +\alpha_5 & +\alpha_6 & +2\alpha_7 & +\alpha_8 &  
	&  &  &  &  &  &  & -\beta_7 &  & -\beta_9 ,\\
	w_1 &=  & \alpha_2 &  &  &  &  &  & +\alpha_8 &  
	& -\beta_1 & +\beta_2 &  &  &  &  &  & +\beta_8 & -\beta_9 ,\\
	w_2 &=  &  &  & \alpha_4 &  & +\alpha_6 &  &  &  
	&  &  &  & +\beta_4 & -\beta_5 & +\beta_6 &  &  & -\beta_9.
\end{alignat*}

\bigskip
Recall that there is one more $2$--handle we have to consider, and as we explained in the proof of Proposition~\ref{divisibility} in Section~\ref{sec:spin}, its attaching circle is homologous to the sum of the oriented curves of the point-pushing maps for \emph{any} chosen marked point inducing a lift of the monodromy factorization from $\M(\Sigma_9)$ to $\M(\Sigma_{9,1})$. (If we could find a section of $(X,f)$, the lift for the corresponding marked point would have no point-pushing maps, so this homology class would be trivial.)  Looking at the point-pushing map in the relation~\eqref{eq:Signature0LF_Explicit} in $\M(\Sigma_{9,1})$ we get the following additional relation in homology: 
\[
s = \alpha_2  +\alpha_8   +\beta_4 -\beta_5 +\beta_6  -\beta_7 -\beta_9 =0 \, . \\
\]

With this explicit presentation in hand, determining the finitely generated abelian group $H_1(X)$ is now a straightforward calculation. We will show that $H_1(X)=\Z^7 \oplus \Z_4 \oplus \Z_2.$

From $b_4^\prime-b_4=\beta_9$ we see $\beta_9=0$.
Then, $a_3-w_3 = \alpha_2 + \alpha_8 -\beta_9$, $a_3^\prime-w_3 = \alpha_4 + \alpha_6 -\beta_9$ give $\alpha_2 + \alpha_8 = 0$, $\alpha_4 + \alpha_6=0$.
Since $b_3 = \beta_7$ modulo $\beta_9 = \alpha_2 + \alpha_8 = \alpha_4 + \alpha_6=0$, we have $\beta_7=0$. Similarly, $b_3^\prime = 0$ gives $\beta_3=0$.
Now $x_1 = \beta_1 -\beta_2 -\beta_8$ and $x_2 = -\beta_4 +\beta_5 -\beta_6$ modulo $\beta_9 = \alpha_2 + \alpha_8 = \alpha_4 + \alpha_6= \beta_3 = \beta_7 =0$, thus $\beta_1 -\beta_2 -\beta_8 = -\beta_4 +\beta_5 -\beta_6 =0$, or $\beta_1 = \beta_2 + \beta_8$ and $\beta_5 = \beta_4 + \beta_6$.
From $w_5=0$ with $\beta_7 = \beta_9 = 0$ we see $\alpha_1 + \alpha_5 + 2\alpha_9 = 0$.
Looking at $w_3=0$ together with $\alpha_1 + \alpha_5 + 2\alpha_9 = \alpha_2 + \alpha_8 = \alpha_4 + \alpha_6 = \beta_3 = \beta_7 = 0$ we get $2\alpha_3 + 2\alpha_7 =0$.
Combining
$0 = w_4+w_6 = 2\alpha_1 + 2\alpha_2 +2\alpha_3 +2\alpha_4 +2\alpha_5 +2\alpha_6 +2\alpha_7 +2\alpha_8 +\beta_3 +\beta_7 -3\beta_9$ and
$\alpha_2 + \alpha_8 = \alpha_4 + \alpha_6 = 2\alpha_3 + 2\alpha_7 = \beta_3 = \beta_7 = \beta_9 =0$ we obtain $2\alpha_1 + 2\alpha_5 =0$. Since we also know $\alpha_1 + \alpha_5 + 2\alpha_9=0$ we deduce $4\alpha_9=0$.
We turn to $w_6=0$ and substitute $\alpha_1 + \alpha_5 = 2\alpha_9, \beta_7=\beta_9=0$ to get $-2\alpha_9 +\alpha_6 +2\alpha_7 +\alpha_8 =0$, or $\alpha_8 = -\alpha_6 -2\alpha_7 +2\alpha_9$.
In summary, we have
\begin{align*}
&2(\alpha_3 + \alpha_7) =0; \\
&4\alpha_9 =0; \\
&\alpha_2 = -\alpha_8 = \alpha_6 +2\alpha_7 -2\alpha_9; \\
&\alpha_4 = -\alpha_6; \\
&\alpha_5 = -\alpha_1 - 2\alpha_9; \\
&\alpha_8 = -\alpha_6 -2\alpha_7 +2\alpha_9; \\
&\beta_1 = \beta_2 + \beta_8; \\
&\beta_3 = 0; \\
&\beta_5 = \beta_4 + \beta_6; \\
&\beta_7 = 0; \\
&\beta_9 = 0.
\end{align*}
It is easy to check that any other relations that come from the vanishing cycles can be deduced from the above relations.
The extra relation coming from the point-pushing map is
\[
s = \alpha_2  +\alpha_8   +\beta_4 -\beta_5 +\beta_6  -\beta_7 -\beta_9 =0. \\
\]
However, this relation can be deduced already from the relations coming from vanishing cycles since $\alpha_2 + \alpha_8 = \beta_4 -\beta_5 +\beta_6 = \beta_7 = \beta_9 =0$.
This implies that we have a pseudosection.
So we can take $g_1 =\alpha_1, g_2 = \alpha_3, g_3 = \alpha_6, 
g_4 = \beta_2, g_5 = \beta_4, g_6 = \beta_6, g_7 = \beta_8,
g_8 = \alpha_9, g_9 = \alpha_3 + \alpha_7$ as generators and the only relations among them are
$4g_8=0$ and $2g_9=0$. We conclude that
\[H_1(X)=\Z^7 \oplus \Z_4 \oplus \Z_2. \]

\smallskip
\noindent{\textit{\underline{Other algebraic invariants}}:}
While we can derive an explicit  presentation of $\pi_1(X)$ in the same fashion we did for $H_1(X)$ earlier, we will not actually need all of this more massive presentation for any of our arguments to follow, even the ones that involve killing the fundamental group. Instead, it will suffice to observe that in the monodromy factorization of $(X,f)$, we have three disjoint nonseparating curves, coming from the bounding quadruple of curves $x_1, z_1, x_2, z_2$; see Figure~\ref{fig:Genus9_5thBoundingQuadruple}. These three curves simultaneously kill three generators of $\pi_1(\Sigma_9)$.

\begin{figure}[htbp]
	\centering
	\includegraphics[height=100pt]{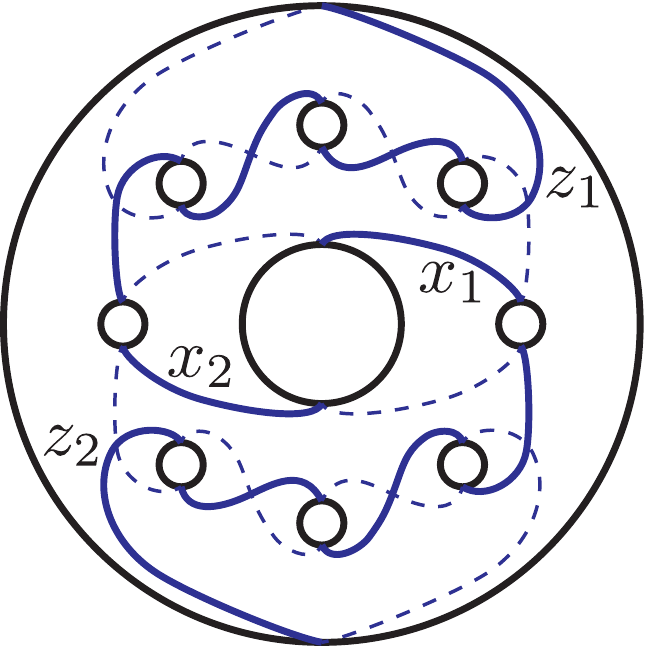}
	\caption{The bounding quadruple.} 
	\label{fig:Genus9_5thBoundingQuadruple}
\end{figure}

As it is the case for any closed oriented $4$--manifold, the remaining homology groups of $X$, as well as $b_2^+(X)$ and $b_2^-(X)$, are already determined by $\eu(X)$, $\sigma(X)$ and $H_1(X)$ by Poincar\'{e} duality and the universal coefficient theorem. In particular, $H_2(X)=\Z^{28} \oplus \Z_4 \oplus \Z_2$ and $b_2^+(X)=b_2^-(X)=14$. Moreover $X$ can be seen to have an even intersection form, which will follow from its spin type.

\smallskip
\noindent{\textit{\underline{Spin type}}:} To build a spin Lefschetz fibration, our heuristic has been to breed exclusively Lefschetz pencils on spin $4$--manifolds, as the monodromy factorizations of such pencils consist of Dehn twists that preserve a spin structure on a compact surface with boundary \cite{BaykurHayanoMonden}. The groundwork for describing spin structures on $X$ was laid out in Section~2, and particularly in Theorem~\ref{SpinLF2}.

From our calculation of the $\Z$--homology classes of the vanishing cycles on $\Sigma_9$ in the symplectic basis $\{\alpha_i, \beta_i\}$ represented by the oriented curves with the same labels in Figure~\ref{fig:Genus9_H1Basis}, we easily deduce the \mbox{$\Z_2$--homology} classes of them. We will single out two spin structures on $X$, restrictions of which to the regular fiber will have different Arf invariants. Let the quadratic form $q_0$ on $H_1(\Sigma_9; \Z_2)$ be defined by
\[q_0(\alpha_i)=q_0(\beta_j)=1 \text{ for all $i, j$ except for } q_0(\beta_9)=0 \,,  \]
and the quadratic form $q_1$ by
\[ q_1(\alpha_i)=q_1(\beta_j)=1 \text{ for all $i, j$ except for }  q_1(\alpha_7)=q_1(\beta_9)=0 \, . \]
Both are easily seen to evaluate as $1$ on all the vanishing cycles. Let $s_0, s_1 \in \mathrm{Spin}(\Sigma_g)$ be the spin structures corresponding to these two forms $q_0, q_1$, which have Arf invariants $\mathrm{Arf}(q_0)=0$ and $\mathrm{Arf}(q_1)=1$, respectively. 

Next, recall that the homology relation $\alpha_2 +\alpha_8 +\beta_4 -\beta_5 +\beta_6 -\beta_7 -\beta_9 =0$ induced by the pseudosection $s$ is already generated by the relations induced by the vanishing cycles we listed during our calculation of $H_1(X)$. So by Proposition~\ref{divisibility}, the homology class of the regular fiber $[F]$ is primitive in $H_2(X)$. As we have $(X,f)$ with a primitive fiber class and signature $\sigma(X) \equiv 0$ (mod $16$), by Theorem~\ref{SpinLF2}, the existence of the quadratic form $q_1$ with $\textrm{Arf}(q_1)=1$ now implies that we have spin structures $\mathfrak{s}_i$ on $X$ coming from the spin structures $s_i$, for each $i=0, 1$. Note that $\mathfrak{s}_0, \mathfrak{s}_1$ are only two of the 512 spin structures on $X$, for $H^1(X; \Z_2)= \Z_2^9$ acts freely and transitively on $\textrm{Spin}(X)$. 
\end{proof}

\medskip
\begin{remark} \label{KodIncreasing}
It might be interesting for symplectic construction enthusiasts to observe how the topology of the pieces we built evolved: Our most elementary building blocks are a genus--$0$ pencil (corresponding to the lantern relation) and genus--$1$ pencils (corresponding to $6$--holed torus and $2$--chain relations) on rational surfaces, breeding of which gives us a genus--$2$ pencil on the ruled surface $T^2 \times S^2$. All have symplectic Kodaira dimension $\kappa=-\infty$. Using copies of the genus--$2$ pencil on the ruled surface, we obtained a symplectic Calabi-Yau $4$--torus, which has $\kappa=0$. We can also re-present our construction of the genus--$9$ fibration so that it is made out of two copies of a genus--$5$ pencil (each one of which is obtained by breeding a pair of our genus--$3$ pencils), along with two more copies of the genus--$3$ pencil. These genus--$5$ pencils have $\kappa=1$.  Lastly, our resulting genus--$9$ fibration $(X,f)$ yields a symplectic $4$--manifold with $\kappa=2$.
\end{remark}

\begin{remark} \label{breeding}
Although we have used the breeding technique to effectively produce symplectic  $4$--manifolds out of smaller symplectic $4$--manifolds, it is not inherently a symplectic operation. One first produces achiral pencils/fibrations on $4$--manifolds that are often not symplectic, and only after pairing the vanishing cycles of all the negative nodes with matching vanishing cycles for positive nodes we can eliminate them to get a symplectic pencil/fibration. In general, the matching pairs would be visible only after a sequence of Hurwitz moves, or equivalently, a sequence of handle slides between the Lefschetz $2$--handles. To approximate a surgery description, one can interpret the breeding construction as a two step process, where one first takes a fiber sum of achiral fibrations, whose monodromies are now supported on a larger surface, and then attaches a $5$--dimensional $3$--handle. (The latter can be seen by looking at the handle diagram of the neighborhood of the two matching nodes; see \cite{BaykurGenus3}.) As observed in  \cite{GompfStipsicz}, when this matching vanishing cycle is trivial in the fundamental group of the complement, eliminating the two Lefschetz $2$--handles is equivalent to removing an $S^2 \times S^2$ or $S^2 \twprod S^2$  summand from a connected sum (and capping off with $D^4$).
\end{remark}

\medskip
\subsection{Lefschetz fibrations with prescribed signatures}  \

We are now ready to prove our main theorem.

\noindent \textit{Proof of Theorem~A.} \,To keep our presentation simple, here we will not attempt to keep the topology of the arbitrary signature examples small, but instead, we will refine our arguments in specific cases, as we will do so in Section~5. In fact, the following bit of information will be enough to run all our arguments:  \textit{There is a signature zero genus--$9$ Lefschetz fibration $(X,f)$, where $X$  is spin and $\pi_1(X)$ has subgroups of  arbitrarily large index.} 
The existence of such a fibration is guaranteed by Theorem~\ref{KeyLF}. We can assume, after a global conjugation, that the Lefschetz fibration $(X,f)$ has a monodromy factorization in  $\M(\Sigma_9)$: 
\begin{equation} \label{simplified}
t_{B_1} \, P_1 = 1 \,  ,  
\end{equation}
where $P_1$ is a product of positive Dehn twists, and $B_1$ is the non-separating curve in Figure~\ref{fig:lanternembedding}. 

\medskip
\noindent \underline{\textit{Any signature}}:\, There are $\phi_1, \phi_2 \in \M(\Sigma_9)$ such that $\phi_1(B_1)=B_2$ and $\phi_2(B_1)=B'_2$, where $B_1, B_2$ and $B'_2$ are the curves shown in Figure~\ref{fig:lanternembedding}. Conjugating the monodromy factorization~\eqref{simplified} with $\phi_i$ for each $i=1,2$, we get \,$t_{B_2} P_2= 1$ and $t_{B'_2} P_3=1$  in $\M(\Sigma_9)$, where $P_i = P_1^{\phi_i}$ is the product of positive Dehn twists along the images of the Dehn twist curves in $P_1$ under $\phi_i$, in the same word order. After cyclic permutations, we get product factorizations \,$t_{B_1} P_1 \cdot P_1 t_{B_1} =1$ and \,$  t_{B_2} P_2 \cdot P_3 t_{B'_2} =1$, and in turn, we get
\begin{equation} \label{lanternsetup}
t_{B_1}^2  t_{B_2}  t_{B'_2} P_1^2 P_2 P_3 = 1 
\end{equation}
in $\M(\Sigma_9)$, where $t_{B_1}, t_{B_2}$ and  $t_{B'_2}$ all commute with each other. The signature of the corresponding Lefschetz fibration, which is nothing but a twisted fiber sum of four copies of the signature zero fibration $(X,f)$, is zero. Since $B_1$, $B'_1$, $B_2$ and $B'_2$ (where $B'_1$ is isotopic to $B_1$) cobound a subsurface $\Sigma_0^4$, we can make a lantern substitution (or equivalently breed with the genus--$0$ pencil on $\CP$) in the  monodromy factorization~\eqref{lanternsetup}, and get
\begin{equation} \label{lanternresult}
t_X t_Y t_Z P_1^2 P_2 P_3  = 1  \,  
\end{equation}
in $\M(\Sigma_9)$,  where $X, Y, Z$ are the images of the standard lantern curves under the embedding of the above $\Sigma_2^2$ into $\Sigma_9$. Since the signature of the lantern relation is $1$, the  factorization~\eqref{lanternresult} prescribes a new Lefschetz fibration $(X_1, f_1)$ with $\sigma(X_1)=1$.

\begin{figure}[htbp]
	\centering
	\subfigure[  \label{fig:lanternembedding}]
	{\includegraphics[height=60pt]{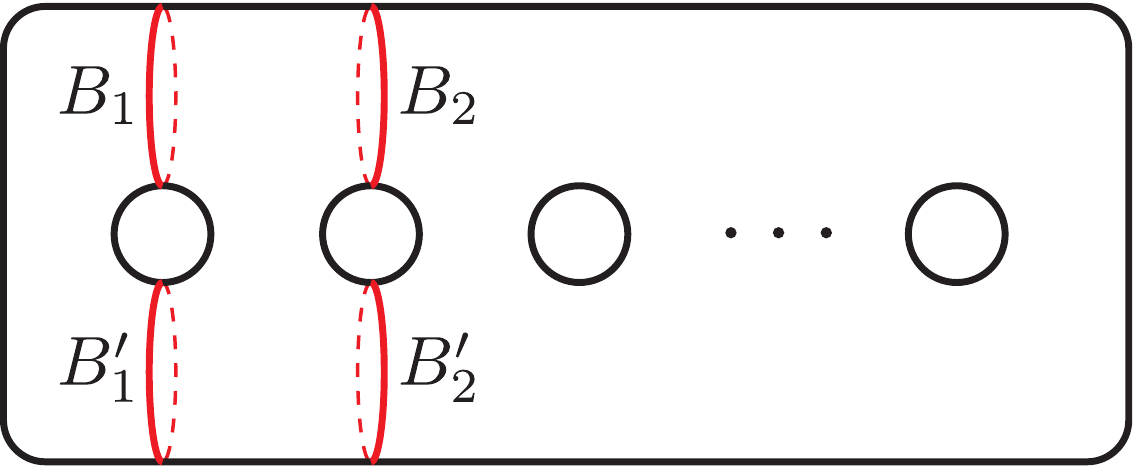}} 
	\hspace{10pt}
	\subfigure[  \label{fig:K3embedding}]
	{\includegraphics[height=60pt]{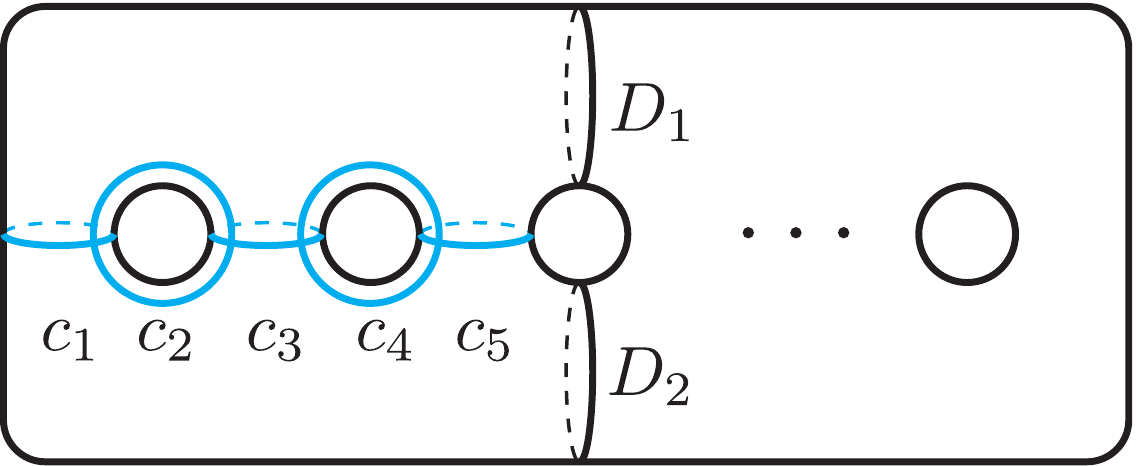}} 
	\caption{Embeddings of lantern and $5$--chain relation curves.} 	
	\label{fig:lanternandK3}
\end{figure}

By taking fiber sum of copies of $(X_1, f_1)$ with copies of any genus--$9$ Lefschetz fibration with a negative signature, we get Lefschetz fibrations $(X_k, f_k)$ with $\sigma(X_k)=k$, for any prescribed $k \in \Z$.  For instance, by taking fiber sum of $k$ copies of $(X_1, f_1)$, we obtain a Lefschetz fibration $(X_k, f_k)$ with $\sigma(X_k)=k$, for any $k \in \Z^+$. If  we employ in this procedure any of the Lefschetz fibrations discussed in Example~\ref{hyperelliptic}, since their vanishing cycles already kill $\pi_1(\Sigma_g)$, we get simply-connected examples, with  prescribed signature.  For example, we can take the fiber sum of  $(X_{41}, f_{41})$ with a genus--$9$ Lefschetz fibration on $\CP \# 41 \CPb$ which has the monodromy factorization~\eqref{hyperellipticrelation}, to get a simply-connected Lefschetz fibration with signature $1$. For later reference, let $(X_k, f_k)$ denote a fixed family of \emph{simply-connected} genus--$9$ Lefschetz fibrations with signature $k \in \Z$ we obtain through these arguments. 

\medskip
\noindent \underline{\textit{Spin with any signature divisible by $16$}}:\, 
The spin structure $\mathfrak{s}$ on $X$ is induced by some spin structure on $\Sigma_9$, which corresponds to a quadratic form on $H_1(\Sigma_9; \Z_2)$ mapping all the Dehn twist curves in the monodromy factorization, and in particular $B_1$, to $1$. 

\begin{figure}[htbp]
	\centering
	\subfigure[ \label{fig:basischanges_a}]
	{\includegraphics[height=60pt]{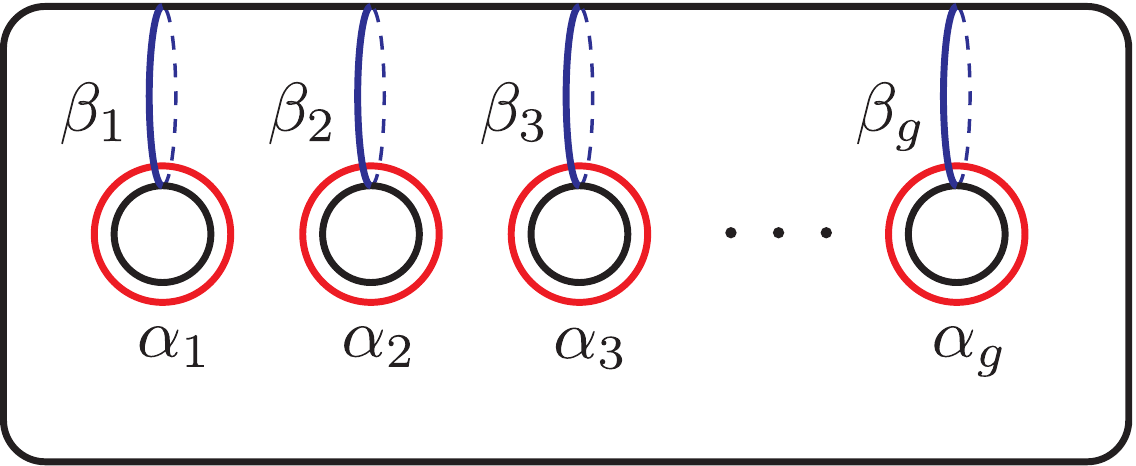}} 
	\hspace{5pt}
	\subfigure[ \label{fig:basischanges_b}]
	{\includegraphics[height=60pt]{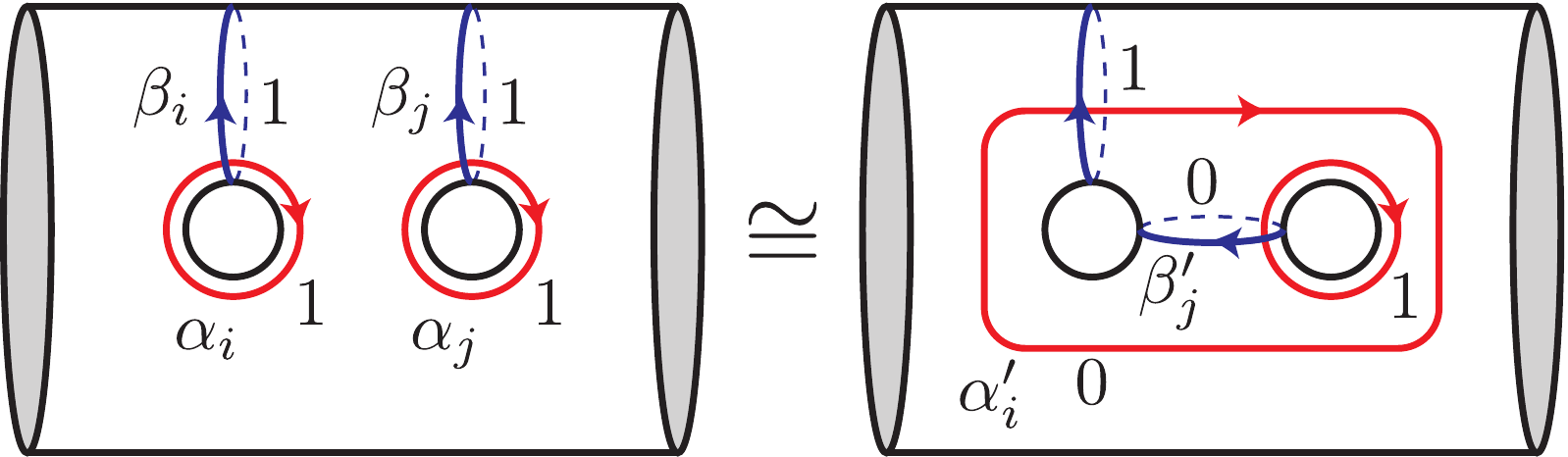}} 
	\subfigure[ \label{fig:basischanges_c}]
	{\includegraphics[height=60pt]{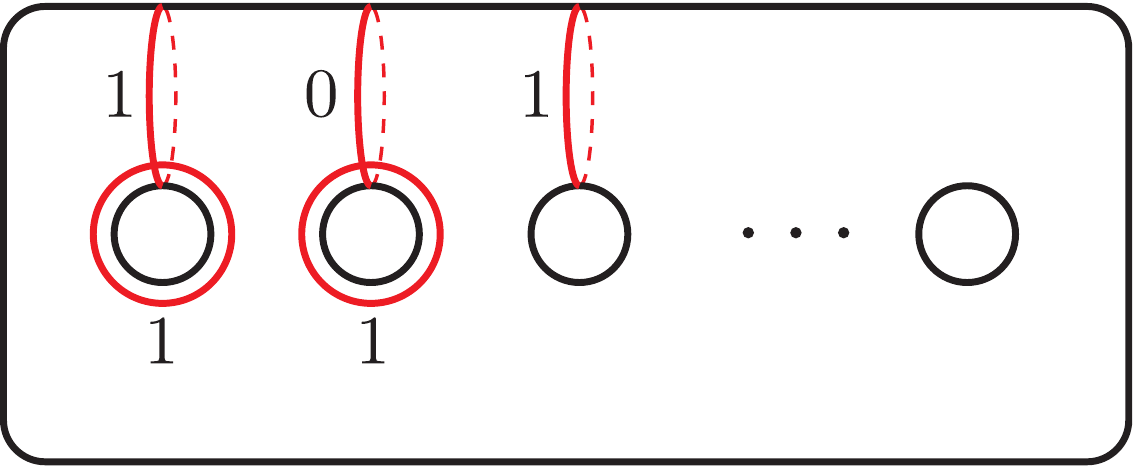}} 
	\caption{Bases changes.} 	
	\label{fig:basischanges}
\end{figure}

Let $\{\alpha_i, \beta_i\}$ be the curves in Figure~\ref{fig:basischanges_a}, which constitute a symplectic basis for $H_1(\Sigma_9; \Z_2)$. 
We claim that, there is a spin structure $s \in \mathrm{Spin}(\Sigma_9)$, where the corresponding quadratic form $q$ is such that
\[ q(\beta_1)=q(\beta_3)=q(\alpha_1)=q(\alpha_2)=1 \text{ and } q(\beta_2)=0 \, , \]
and $q$ sends every vanishing cycle (we will get after a conjugation) to $1$. 

While the values of the remaining basis elements under $q$ will not matter for our arguments, we will rely on the fiber genus  $g$ being at least $3$. We will obtain $s$ as the image, under a diffeomorphism $\varphi$ of $\Sigma_9$,  of the initial spin structure on $\Sigma_9$ yielding the spin structure $\mathfrak{s}$ on $(X,f)$.
(The promised quadratic form $q$ will be then obtained by pulling-back under $\varphi^{-1}$ the quadratic form for the initial spin structure.) We'll require $\varphi$ to fix $B_1$. We will obtain this diffeomorphism $\varphi$ as a composition of several diffeomorphisms of $\Sigma_9$, each one of which realizes a well-known symplectic basis change for $H_1(\Sigma_9; \Z_2)$; cf. \cite{Browder, Johnson}. At each step, we take the new spin structure we get under the diffeomorphism of $\Sigma_9$. Note that every Dehn twist curve  in the monodromy factorization will be conjugated by the diffeomorphisms, and will be mapped to $1$ again under the new quadratic form corresponding to the new spin structure.

Below, we will focus on the values of the ordered pair of elements  $\alpha_i, \beta_i$ in $\Z_2 \times \Z_2$ under the quadratic form $q$, and will simply call them a \emph{$(q(\alpha_i), q(\beta_i))$ pair}.

First of all, note that if $q(\alpha_i)= q(\beta_i)=0$ for some  $i$, we can replace $\alpha_i$ with $\alpha'_i:=t_{\beta_i}(\alpha_i)$ so  that $q(\alpha'_i)=1$, or $\beta_i$ with $\beta'_i:=t_{\alpha_i}(\beta_i)$ so that $q(\beta'_i)=1$. Therefore, any time $q(\alpha_i)\, q(\beta_i)=0$, we can find a pair of geometrically dual curves in a small neighborhood of $\alpha_i \cup \beta_i$, where $q$ maps our pick of one of the  two curves to $1$ and the other one to $0$. So there are diffeomorphisms (which obviously change the spin structure) that trade between a $(0,0)$, a $(1,0)$ or a $(0,1)$ pair locally.

Secondly, suppose $q(\alpha_i)  \, q(\beta_i)=1=q(\alpha_j)\, q(\beta_j)$ for some $i \neq j$. It is easy to see that there is a diffeomorphism supported in a $\Sigma_2^2$ neighborhood of $(\alpha_i \cup \beta_i) \sqcup (\alpha_j \cup \beta_j)$, which replaces $\alpha_i$ and $\beta_j$ with $\alpha'_i$ and $\beta'_j$ shown in Figure~\ref{fig:basischanges_b}, while fixing $\beta_i$ and $\alpha_j$. Here $\alpha'_i, \beta_i$ and $\alpha_j, \beta'_j$ are disjoint geometrically dual pairs contained in $\Sigma_2^2$ as well, but now $q(\alpha'_i)=q(\beta'_j)=0$. Thus, we see that any time we have two disjoint $(1,1)$ pairs, there is a diffeomorphism which replaces them with disjoint $(0,1)$ and $(1,0)$ pairs, and vice versa. 

We can now complete the proof of our claim. All we have initially is that we have a quadratic form which maps the monodromy curves to $1$, and in particular $q(\beta_1)=1$, since $\beta_1=B_1$. Suppose $q(\alpha_1)=0$. Since the genus of the surface $g \geq 3$, it follows from the above observations that, after a diffeomorphism supported away from $\alpha_1 \cup \beta_1$, we can get a $(1,0)$ pair $\alpha_j, \beta_j$. Under the inverse of the diffeomorphism we described in the previous paragraph, we can switch these two disjoint $(0,1)$ and $(1,0)$ pairs with two disjoint $(1,1)$ pairs, all the while fixing $\beta_1$. Relabeling the quadratic form corresponding to the new spin structure as $q$, we now have $q(\alpha_1)=q(\beta_1)=1$. Finally, after another diffeomorphism supported away from the new $\alpha_1 \cup \beta_1$, and  relabeling again the quadratic form for the new spin structure, we can assume that $\alpha_2, \beta_2$ is a $(1,0)$ pair and $q(\beta_3)=1$ as desired; see Figure~\ref{fig:basischanges_c}. This completes the proof of our claim.

Let $c_1, c_2, c_3, c_4, c_5$ be as in Figure~\ref{fig:K3embedding}. For $s\in \textrm{Spin}(\Sigma_9)$ we obtained above, and $q$ the  quadratic form corresponding to $s$, we easily see that now $q(c_i)=1$, so  \mbox{$t_{c_i} \in \M(\Sigma_9, s)$} for all $i$. In particular, $\phi=t_{c_1} t_{c_2} t_{c_3} t_{c_4} t_{c_5} \in \M(\Sigma_9, s)$, where one can easily verify that $\phi(c_i)=c_{i+1}$, for each $i=1, \ldots, 4$.  (Alternatively, for each $i=1, \ldots, 4$, we can take $\psi_i = t_{c_{i}} t_{c_{i+1}}  \in \M(\Sigma_9, s)$ so that $\psi_i(c_i)=c_{i+1}$.)  After  conjugating the positive factorization~\eqref{simplified} with the diffeomorphism $\varphi$ we described above, we can further conjugate the resulting positive factorization with powers of $\phi$, and get five positive factorizations \,  $Q_i:=t_{c_{i}} P_1^{\phi^{i-1} \varphi} = 1$, for $i=1, \ldots, 5$. (Recall that $B_1=c_1$.) 
Next, take the product factorization $(Q_1 Q_2 Q_3 Q_4 Q_5)^6 =1 $, which, after conjugating away the products of positive Dehn twists $P_1^{\phi_{i-1}\varphi}$ in $Q_i$ (i.e. after the corresponding Hurwitz moves), yields a positive factorization
\begin{equation}\label{5chainsetup}
(t_{c_1} t_{c_2} t_{c_3} t_{c_4} t_{c_5})^6  \, Q = 1 \,  
\end{equation}
in $\M(\Sigma_9)$, where $Q$ is the product of all the remaining Dehn twists conjugated away from $t_{c_i}$. This is in fact a factorization in  $\M(\Sigma_9, s)$. Because it is a product of signature zero relations, the signature of this last relation is also zero.

Now, by substituting the \emph{inverse} of the $5$--chain relation in the factorization~\eqref{5chainsetup}, or equivalently breeding with the achiral counter-part of the genus--$2$ pencil on the $\K$ surface given in Example~\ref{oddchain} (which we do so by embedding the relation into the subsurface $\Sigma_2^2$ that is a neighborhood of the chain $c_1, \ldots, c_5$), we get a new positive factorization
\begin{equation}\label{5chainresult}
R:=t_{D_1} t_{D_2} \, Q = 1 \, 
\end{equation}
in $\M(\Sigma_9)$. It is evident from Figure~\ref{fig:basischanges_c} that $D_1=\beta_3$ and $D_2$ is homologous to it, so $q(D_1)=q(D_2)=1$. Thus, this too is in fact a factorization in $\M(\Sigma_9, s)$, i.e. the quadratic form $q$ maps all the Dehn twist curves in the factorization~\eqref{5chainresult} to $1$. Following our heuristic, we have once again bred with an achiral pencil on a spin $4$--manifold. This new relation has signature $16$, since we had a signature zero factorization, and inserted the inverse $5$--chain relation, which has signature $16$ \cite{EndoNagami} (it is the signature of the $\K$--surface with the opposite orientation after all).

To be able to apply Theorem~\ref{SpinLF2} and equip the new fibration with a spin structure, we still need to confirm that it also has a primitive fiber class. While this  is indeed the case for the fibration corresponding to the factorization~\eqref{5chainsetup}, which is nothing but a twisted fiber sum of several copies of $(X,f)$, the substitution we have made above requires a new calculation for the pseudosection. There is an easier way to get what we want, which we will discuss next, in two distinct cases:

Suppose $\textrm{Arf}(s)=1$. Let $H=1$ be the positive factorization~\eqref{hyperellipticrelation} in Example~\ref{hyperelliptic} for a genus--$9$ fibration on $\CP \# 41 \CPb$. Recall that the unique quadratic form on $H_1(\Sigma_9; \Z_2)$ that maps all the Dehn twist curves in $H$ to $1$, also has Arf invariant $1$.  So, there is some diffeomorphism $\psi$ of $\Sigma_9$ sending the spin structure corresponding to that quadratic form to the spin structure $s$ we had. Now take the product positive factorization
\begin{equation}\label{5chainresultA}
R^6 \, (H^2)^\psi =1 
\end{equation}
in $\M(\Sigma_9)$, which is in fact a factorization in $\M(\Sigma_9, s)$. It also
has signature $6\cdot 16 +2 (-40)= 16$. Because the Dehn twist curves in $H$ already kill $\pi_1(\Sigma_9)$, by the particular case of Proposition~\ref{divisibility}, the fiber class of the Lefschetz fibration $(Z_1, h_1)$, prescribed by the positive factorization~\eqref{5chainresultA}, is primitive. So  $Z_1$ is spin by Theorem~\ref{SpinLF2}.

Suppose  $\textrm{Arf}(s)=0$. Consider the positive 
factorization~\eqref{yunrelation} in Example~\ref{yun}. For $n=8$ and $m=1$, this prescribes a genus--$9$ Lefschetz fibration on $(T^2 \times S^2) \# \, 32 \CPb$.
While there are four quadratic forms on $H_1(\Sigma_9; \Z_2)$ which map all the the Dehn twist curves in the factorization~\eqref{yunrelation} to $1$, they all have Arf invariant $0$. 
As explained in Example~\ref{yun}, there is a twisted fiber sum of two copies of this fibration, which gives a spin genus--$9$ Lefschetz fibration with a section on the knot surgered $E(8)_K$, where $K$ is any genus--$1$ fibered knot, say a trefoil. Let \,$Y=1$ be a positive factorization corresponding to this latter fibration. So it has signature $-64$, and since the fibration has a section, $\pi_1(\Sigma_9) \, / \, N = \pi_1(E(8)_K)=1$, where $N$ is the subgroup of $\pi_1(\Sigma_9)$ normally generated by the Dehn twist curves in $Y$. So the Dehn twist curves in the positive factorization $Y$ already kill $\pi_1(\Sigma_g)$. (In fact, here we chose $m=1$ so one can also see directly how to twist the fiber sum to get a spin fibration with trivial $\pi_1$ as desired.) The unique quadratic form on $H_1(\Sigma_9; \Z_2)$, which maps all the monodromy curves to $1$, necessarily has Arf invariant $0$. It follows that there is again some diffeomorphism $\psi$ of $\Sigma_9$ sending the spin structure corresponding to that quadratic form to the spin structure $s$ we had. Now take the product positive factorization
\begin{equation}\label{5chainresultB}
R^5 \, Y^{\psi} =1  \, ,
\end{equation}
in $\M(\Sigma_9)$, which is in fact a factorization in $\M(\Sigma_9, s)$. It, too
has signature $5\cdot 16 +(-64)= 16$. Because the Dehn twist curves in $Y$ already kill $\pi_1(\Sigma_9)$, once again, by the particular case of Proposition~\ref{divisibility} and Theorem~\ref{SpinLF2},  for $(Z_1, h_1)$ denoting the genus--$9$ Lefschetz fibration prescribed by the positive factorization~\eqref{5chainresultB}, we conclude that $Z_1$ is spin. 

Note that either one of the two cases, with $\textrm{Arf}(s)=1$ or $0$, may occur depending on the initial choice of \mbox{$s \in \textrm{Spin}(\Sigma_9)$} which yields the spin structure on $(X,f)$. (Indeed, the signature zero genus--$9$ Lefschetz fibration $(X,f)$ of Theorem~\ref{KeyLF} admits spin structures of both types.) Also note that, in either case, the vanishing cycles of $(Z_1, h_1)$ we constructed kill $\pi_1(\Sigma_9)$. 

To finish our proof, take a fiber sum of $k$ copies of $(Z_1, h_1)$ to produce a genus--$9$ Lefschetz fibration $(Z_k, h_k)$ with  $\sigma(Z_k)=16k$, for any prescribed $k \in \Z^+$. By Proposition~\ref{divisibility} and Theorem~\ref{SpinLF2}, all $Z_k$ are spin, and since the vanishing cycles of even one copy already kill $\pi_1(\Sigma_9)$,  all $Z_k$ are simply-connected. Negative signature examples are already realized by fiber sums of  Lefschetz fibrations on knot surgered \mbox{$\K$--surfaces} discussed in Example~\ref{yun}, for any genus--$4$ fibered knot. Finally, to get \emph{simply-connected}  examples of signature zero, one can simply lower the power of $R$ by one in the monodromy factorizations~\eqref{5chainresultA} and~\eqref{5chainresultB}.  For later reference, let $(Z_k, h_k)$ denote a fixed family of \emph{simply-connected} genus--$9$ Lefschetz fibrations with $Z_k$ spin and $\sigma(Z_k)=16k$, for  $k \in \Z$.

\smallskip
\noindent \underline{\textit{Higher genera}}:\, For any index $d$ subgroup of $\pi_1(X)$, there exists a $d$--fold cover $\pi_d\colon \tilde{X}(d) \to X$. The composition $\tilde{f}(d):=f \circ \pi_d \colon \tilde{X}(d) \to S^2$ yields a Lefschetz fibration on $\tilde{X}(d)$ of genus $g=8d+1$, and since the signature  is multiplicative under unbranched coverings, $(\tilde{X}(d), \tilde{f}(d))$ too is a signature zero Lefschetz fibration.  We can thus follow the same construction scheme we had above, now for genus $g=8d+1$ fibrations. 

For arbitrary signatures, take a twisted fiber sum of four copies of $(\tilde{X}(d), \tilde{f}(d))$ so that we get Dehn twist curves cobounding a subsurface $\Sigma_0^4$, and then make a lantern substitution to get a genus $g=8d+1$ Lefschetz fibration $(\tilde{X}_1(d), \tilde{f}_1(d))$ with signature $1$. Then, by taking fiber sums of copies of  $(\tilde{X}_1(d), \tilde{f}_1(d))$ and copies of any genus--$g$ Lefschetz fibration with a negative signature, we can build $(\tilde{X}_k(d), \tilde{f}_k(d))$ with $\sigma(\tilde{X}_k(d))=k$, for any given $k \in \Z$. Once again, picking the negative signature summands as simply-connected ones, like the ones in Example~\ref{hyperelliptic}, we can assume that $\tilde{X}_k(d)$ are all simply-connected. 

For spin examples, first note that  if $\pi \colon \tilde{Y} \to Y$ is a finite covering of a spin $4$--manifold $Y$, then $\tilde{Y}$ is also spin: The tangent bundle $T\tilde{Y}$ is isomorphic to the pull-back bundle $\pi^* (TY)$, and by functoriality, the second Stiefel-Whitney class of $\tilde{Y}$ is $w_2(T\tilde{Y})=\pi^*w_2(TY)$. So  for \mbox{$w_2(Y)=w_2(TY)=0$} in $H^2(Y; \Z_2)$, we have $w_2(\tilde{Y})=w_2(T\tilde{Y})=0$ in $H^2(\tilde{Y}; \Z_2)$ as well. Since $w_2$ is the only obstruction to the existence of a spin structure on an orientable manifold, the signature zero Lefschetz fibrations $(\tilde{X}(d), \tilde{f}(d))$ of genus $g=8d+1$ we described above are all spin. It follows that there is a spin structure on $\Sigma_g$ with a quadratic form that evaluates as $1$ on all the Dehn twist curves in a monodromy factorization of $(\tilde{X}(d), \tilde{f}(d))$. As the arguments we gave in the genus $g=9$ case applies just the same to any other genus $g \geq 3$, we can then build a twisted fiber sum of copies of $(\tilde{X}(d), \tilde{f}(d))$, with a monodromy factorization of the form $(t_{c_1} t_{c_2} t_{c_3} t_{c_4} t_{c_5})^6  \, Q = 1 \, $ in $\M(\Sigma_g, s)$, for some spin structure $s$. Once again, by inverse $5$--chain substitution, we get a new genus--$g$ Lefschetz fibration, with a  monodromy factorization in $\M(\Sigma_g, s)$. By fiber summing this fibration with simply-connected, spin genus--$g$ fibrations, which come with a quadratic form on $H_1(\Sigma_g; \Z_2)$ with the same Arf invariant as  $q$, we obtain a simply-connected, spin genus--$g$ Lefschetz fibration $(\tilde{Z}_1(d), \tilde{h}_1(d))$, with $\sigma(\tilde{Z}_1(d))=16$. For this, we observe that for any $g=8d+1$, there exists a spin genus--$g$ Lefschetz fibrations among the ones in Examples~\ref{yun}, where the quadratic form that evaluates as $1$ on all the monodromy curves has the desired Arf invariant. Finally, by taking further sums as explained in the genus--$9$ case, we can get simply-connected, spin genus--$g$ Lefschetz fibrations $(\tilde{Z}_k(d), \tilde{h}_k(d))$ with $\sigma(\tilde{Z}_k(d))=16k$, for any prescribed $k \in \Z^+$, and in turn examples with any signature $16k$, for $k \in \Z$. 

By the initial assumption on $\pi_1(X)$, we can take the covering degree $d$ to be arbitrarily large, and obtain the above examples with arbitrarily high genera. 

\smallskip
\noindent \underline{\textit{Other properties}}:\,  We equip all our examples with a Gompf-Thurston symplectic form so they become symplectic Lefschetz fibrations.
Set $(\tilde{X}_0(1), \tilde{f}_0(1))=(X_0, f_0)$ and $(\tilde{Z}_0(1), \tilde{h}_0(1))=(Z_0, h_0)$, which are  the simply-connected and signature zero examples for genus $g=9$ where the latter is spin. Further fiber sums with  copies of the simply-connected, signature zero genus $g=8d+1$ Lefschetz fibration $(\tilde{X}_0(d), \tilde{f}_0(d))$, or in the spin case, with copies of $(\tilde{Z}_0(d), \tilde{h}_0(d))$, we get an infinite family of simply-connected examples. Since fiber sums of Lefschetz fibrations of genus $g\geq 1$ are always minimal \cite{Usher} (also see \cite{BaykurMinimality}), these symplectic $4$--manifolds are minimal. 

This completes the proof of Theorem~A. $\QED$

\medskip
\begin{remark}
While all the examples of arbitrary signature Lefschetz fibrations over the $2$--sphere we have presented in this article are of fiber genus $g \equiv 1$ (mod $8$), we do not have any reasons to believe that the gaps in values of $g$ are essential. 
On the other hand, it is interesting to determine the smallest $g$ for a genus--$g$ Lefschetz fibration with positive signature, and even more so, with zero signature (as it should  be evident from our proof above, one can then generate examples with any other signatures). For the restricted class of hyperelliptic Lefschetz fibrations, one can derive upper bounds using Endo's signature formula \cite{Endo}, and in particular, any genus $g=1$ or $2$ fibration is known to have  negative signature; also see \cite{Ozbagci}. Therefore, the question really is: \

\smallskip
\noindent \textit{Are there genus--$g$ Lefschetz fibrations over the $2$--sphere with arbitrary signatures for $3 \leq g \leq 8$?}  \

\smallskip

Note that if one asks the analogous question for Lefschetz fibrations over the $2$--disk instead, \emph{with no restriction on their global monodromy}, then it is easier to generate positive signature examples, and there is a satisfactory answer that follows from the work of Cengel and Karakurt in  \cite{CengelKarakurt}, where we see that $g=1$ suffices in this case. 
\end{remark}

\begin{remark} \label{SignatureHistory}
It was conjectured by Stipsicz that all symplectic Lefschetz fibrations over the \mbox{$2$--sphere} had negative signatures  \cite{OzbagciThesis, StipsiczTalk}, and this constituted an open problem for over  20 years; see e.g.  \cite{OzbagciThesis, Ozbagci, StipsiczTalk}, \cite[Problem 6.3]{StipsiczSurvey}, and \cite[Problems 7.4]{KorkmazStipsicz}. Besides the lack of examples with positive signatures in the literature, as far as we know, not all negative values were known to be realized as the signature of Lefschetz fibrations over the $2$--sphere either ---especially for  fixed fiber genus. Since the signature is additive under fiber sums, the gaps in the latter case were essentially due to lack of examples with signatures close to zero. Prior to our work, the largest known signatures in the literature we know of had signature $\sigma=-4$ (realized by the examples of Matsumoto, Cadavid and Korkmaz for every even $g\geq 2$,  and by the examples of the first author in \cite{BaykurGenus3} for every odd $g \geq 3$), with the exception of $\sigma=-3$ in the $g=2$ case (realized by the examples of Xiao \cite{Xiao} and Baykur-Korkmaz \cite{BaykurKorkmaz}). 
\end{remark}

\begin{remark} \label{Holomorphic}
We expect most of our examples to be non-holomorphic. The main building blocks, and many non-simply-connected examples we can produce, have odd first Betti numbers, and thus their total spaces are not even homotopy equivalent to compact complex surfaces. It would be interesting to know to what extend an analogue of Theorem~A holds in the holomorphic category:

\smallskip
\noindent \textit{Are there holomorphic Lefschetz fibrations over $\CPo$ with arbitrary signatures?}  \

\smallskip
Many examples with various negative signatures appear in the works of Persson, Peter, Xiao among many others. It is claimed in \cite{Park} that some of the holomorphic fibrations on positive signature compact complex surfaces described in   \cite{PerssonEtal} are Lefschetz, but the authors' construction in the positive index case appears to always involve multiple fibers. (Existence of multiple fibers have no bearing on the rest of the arguments of \cite{Park}.) In fact the authors  successfully build their \emph{negative} signature examples  as Lefschetz fibrations, and use this to argue the simple-connectivity of their complex surfaces, whereas for their positive signature examples, they take a detour and use different arguments to kill the fundamental group \cite{PerssonEtal}.
\end{remark}


\smallskip
\section{Proofs of Corollary~C and Theorem~D}  

In this final section we focus on the signature zero case and apply our techniques to present novel constructions of symplectic $4$--manifolds homeomorphic but not diffeomorphic to \mbox{$\#_m (S^2 \times S^2)$,} the connected sum of $m$ copies of $S^2 \times S^2$. Here $m$ is necessarily odd since the holomorphic Euler characteristic of any almost complex \mbox{$4$--manifold} is an integer, implying that the Betti numbers of a symplectic $4$--manifold satisfies the equality \,$1-b_1+b_2^+ \equiv 0$ (mod 2). We will first produce such  examples as Lefschetz fibrations over the $2$--sphere and prove Corollary~C. We will then produce examples with smaller topology by applying symplectic surgeries to certain Lefschetz fibrations over the $2$--torus and prove Theorem~D.

\subsection{Exotic $\#_m  (S^2 \times S^2)$ as symplectic Lefschetz fibrations }  \

Any example of simply-connected, spin, signature zero symplectic Lefschetz fibration granted by Theorem~A  is necessarily homeomorphic to $\#_m (S^2 \times S^2)$ (for some large $m$), by Freedman  \cite{Freedman}. Whereas by the works of Taubes \cite{Taubes1, Taubes2}, no symplectic $4$--manifold with $b_2^+>1$ can be diffeomorphic to such a connected sum,  thus it would be an exotic  $\#_m  (S^2 \times S^2)$.  Corollary~C to our main theorem promises two refinements:  we can get explicit examples for $m=127$ and also for \emph{every} odd  $m \geq 415$. Although we are going to produce all our examples with fiber genus $g=9$, one can adopt the construction scheme below to generate higher genera examples as well (for different values of $m$), as we did so in the proof of Theorem~A.
 
\medskip
\noindent \textit{Proof of Corollary~C}. \,
Let $(X,f)$ be the signature zero  genus--$9$ Lefschetz fibration of Theorem~\ref{KeyLF}, equipped with the spin structure $\mathfrak{s}_0$. Recall that $\mathfrak{s}_0$ is induced by $s_0 \in \textrm{Spin}(\Sigma_9)$ with a quadratic form $q_0$ that evaluates as $1$ on all the basis elements $\alpha_i, \beta_i$ but $\beta_9$ given  in Figure~\ref{fig:Genus9_H1Basis}. 

For an easier presentation of our arguments, let us first note that there is a diffeomorphism of $\Sigma_9$, which fixes the spin structure $s_0$ and maps the bounding quadruple $(z_1, x_1, z_2, x_2)$ (see Figure~\ref{fig:Genus9_5thBoundingQuadruple}) in the monodromy factorization~\eqref{eq:Signature0LF} of $(X,f)$, to the quadruple $(\beta_1,\beta_1^\prime,\beta_5, \beta_5^\prime)$ shown in Figure~\ref{fig:killingpi1}(a). For instance, we can take the diffeomorphism
\begin{align*}
\phi_1= t_{6}^{-1} t_{8}^{-1} t_{7}^{-1} t_{6}^{-1} t_{\check{5}}t_{\hat{5}} t_{\check{4}}t_{\hat{4}} t_{\check{1}}t_{\hat{1}} t_{\check{5}}t_{\hat{5}} t_{\check{4}}t_{\hat{4}} t_{\check{3}}t_{\hat{3}} t_{\check{2}}t_{\hat{2}} t_{\check{1}}t_{\hat{1}}
\end{align*}
where the Dehn twist curves are as shown in Figure~\ref{fig:Genus9_SpinDiffeo}.
One can easily verify that $\phi_1 \in \M(\Sigma_9, s_0)$ by looking at its action on the symplectic basis elements  in Figure~\ref{fig:Genus9_H1Basis}. (Here not every Dehn twist we employed is in $\M(\Sigma_g, s_0)$, but their product $\phi_1$ is.) After a global conjugation with $\phi_1$ and Hurwitz moves, we get a monodromy factorization for $(X,f)$ of the form
\begin{equation} \label{monodwithquad}
t_{\beta_1} t_{\beta'_1} t_{\beta_5} t_{\beta'_5} \, P_1 = 1 \, \text{ in } \M(\Sigma_9) 
\end{equation}
where $P_1$ is the product of the remaining Dehn twists. This is in fact a factorization in $\M(\Sigma_9, s_0)$.

\begin{figure}[htbp]
	\centering
	\includegraphics[height=100pt]{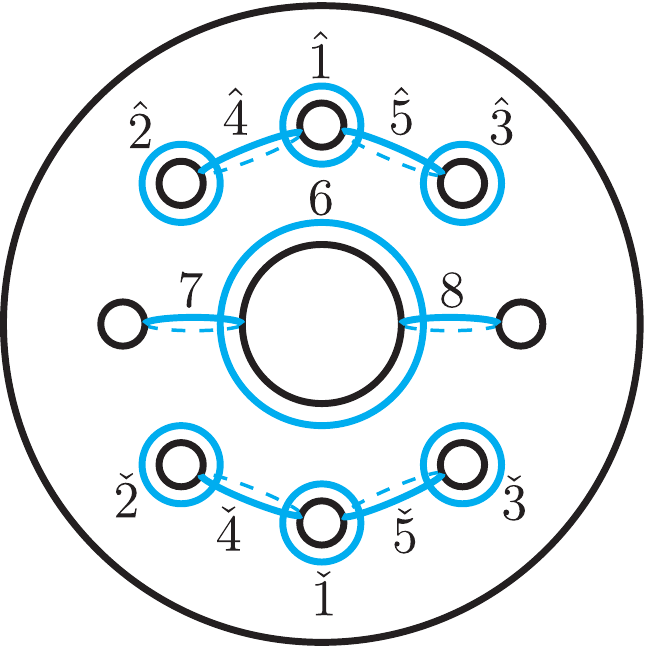}
	\caption{The Dehn twist curves for the spin diffeomorphism $\phi_1$.} 
	\label{fig:Genus9_SpinDiffeo}
\end{figure}

\noindent \underline{\textit{Simple-connectivity}}:
We will first show how to get simply-connected fibrations. For $\alpha_i, \beta_j,\beta'_j$ as in Figure~\ref{fig:killingpi1}(a), we have $q_0(\alpha_i)=q_0(\beta_j)=q_0(\beta'_j)=1$ for all $i=1, \ldots, 9$ and $j=1, \ldots, 8$. It follows that the Dehn twists $t_{\alpha_i}$, $t_{\beta_j}$, $t_{\beta'_j}$ are in $\M(\Sigma_9, s_0)$ for all $i$ and $j$ as above. Also note that the clockwise $(\frac{\pi}{4}$)--rotation $\rho$ about the center of the figure and the involution $\iota$ about the $y$--axis as illustrated in Figure~\ref{fig:killingpi1}(a) both preserve $s_0$. It is now easy to see that there are diffeomorphisms $\phi_k \in \M(\Sigma_9, s_0)$ for each $k=2, \ldots, 6$ which takes the curves $\beta_1, \beta'_1, \beta_5$ to the basis elements $\alpha_i, \beta_i$ as shown in \mbox{Figure~\ref{fig:killingpi1}(b1)--(b6).} For instance we can take
\begin{align*}
\phi_2 &=  t_{\alpha_7} t_{\beta_7}   t_{\beta'_7} t_{\alpha_7}  t_{\alpha_9} t_{\beta'_7}    t_{\beta'_2} t_{\alpha_9} \, \rho \\
\phi_3 &= \iota \phi_2 \\
\phi_4 &=  t_{\beta_1} t_{\alpha_1}  t_{\beta'_1} t_{\alpha_9}  t_{\beta_5} t_{\alpha_5} \\
\phi_5 &=   t_{\beta_7} t_{\alpha_7}  t_{\beta_6} t_{\alpha_6}  t_{\beta_2} t_{\alpha_2} \phi_2 \\
\phi_6 &=   t_{\beta_8} t_{\alpha_8}  t_{\beta_4} t_{\alpha_4}  t_{\beta_3} t_{\alpha_3} \phi_3 .
\end{align*}
Homotoping the curves $\alpha_i, \beta_i$, $i=1, \ldots, 9$ to share a common base point, and orienting them, we get a basis for $\pi_1(\Sigma_9)$. So the curves $\{\alpha_i, \beta_i\}_{i=1}^9$ normally generate $\pi_1(\Sigma_9)$. In turn, we see that $\beta_1, \beta'_1, \beta_5$, and their images under $\phi_k$, $k=2, \ldots 6$, all together normally generate $\pi_1(\Sigma_9)$. \linebreak (Here $\beta_9$ is normally generated by $\beta_1$ and $\beta'_1$.) 

\begin{figure}[htbp]
	\centering
	\begin{tabular}{cc}
	\multirow{3}{*}{
	\subfigure[ \label{fig:killingpi1_a}]
	{\includegraphics[height=185pt]{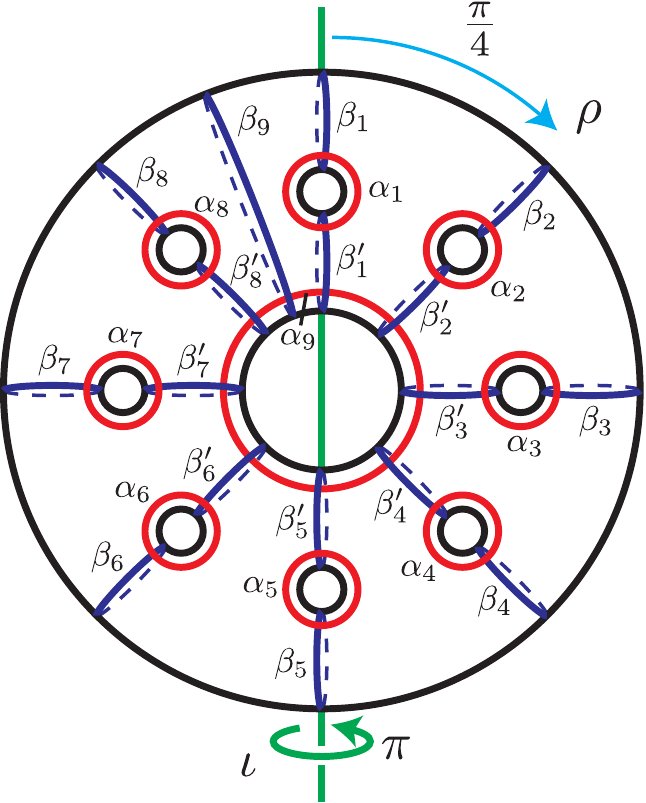}} 
	}
	& \\
	&
	\renewcommand*\thesubfigure{}
	\begin{minipage}{0.18\hsize}
		\subfigure[(b1) \label{fig:killingpi1_b1}]
		{\includegraphics[height=75pt]{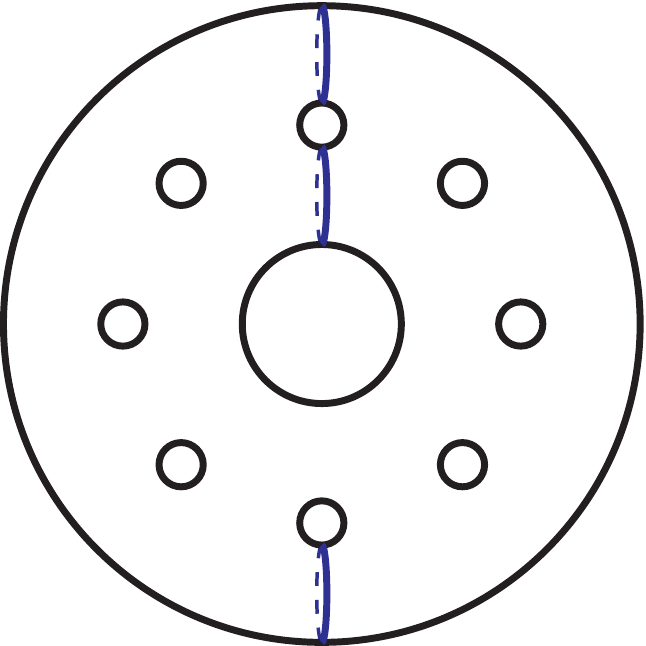}} 
	\end{minipage}
	\begin{minipage}{0.18\hsize}
		\subfigure[(b2) \label{fig:killingpi1_b2}]
		{\includegraphics[height=75pt]{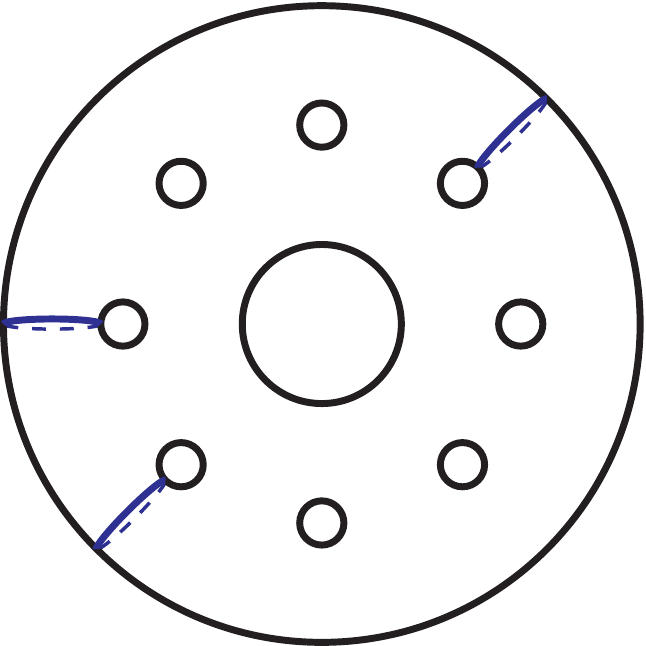}} 
	\end{minipage}
	\begin{minipage}{0.18\hsize}
		\subfigure[(b3) \label{fig:killingpi1_b3}]
		{\includegraphics[height=75pt]{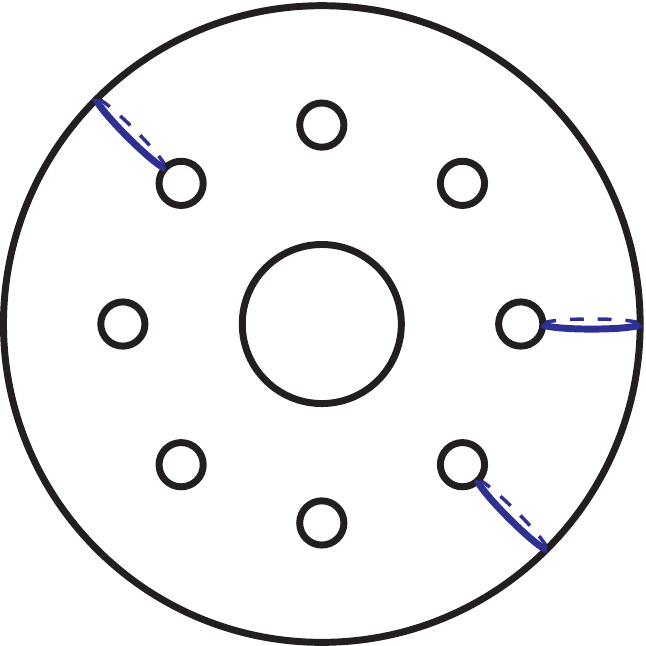}} 
	\end{minipage}\\
	&
	\renewcommand*\thesubfigure{}
	\begin{minipage}{0.18\hsize}
		\subfigure[(b4) \label{fig:killingpi1_b4}]
		{\includegraphics[height=75pt]{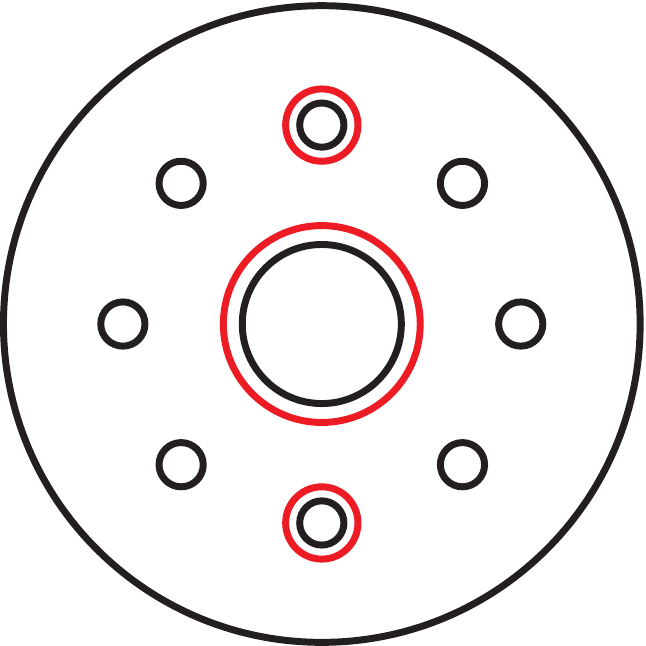}} 
	\end{minipage}
	\begin{minipage}{0.18\hsize}
		\subfigure[(b5) \label{fig:killingpi1_b5}]
		{\includegraphics[height=75pt]{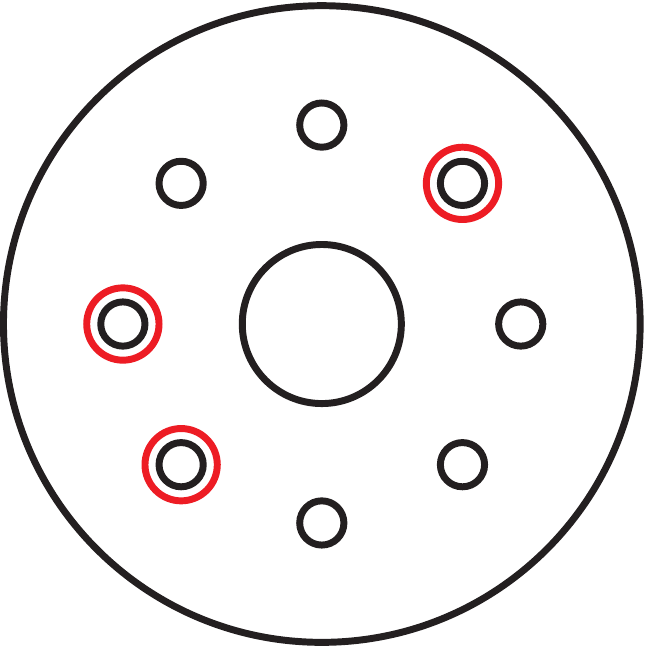}} 
	\end{minipage}
	\begin{minipage}{0.18\hsize}
		\subfigure[(b6) \label{fig:killingpi1_b6}]
		{\includegraphics[height=75pt]{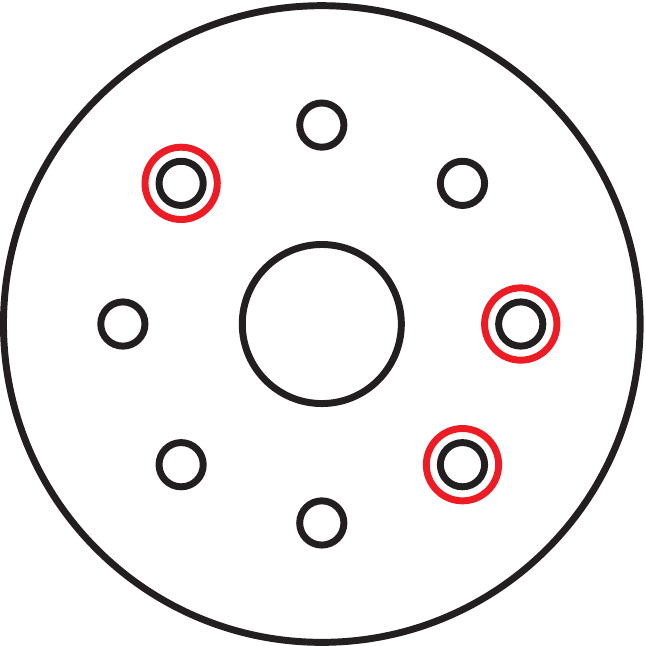}} 
	\end{minipage}
	\end{tabular}
	\caption{On the left: The curves $\alpha_i, \beta_i, \beta'_i$ and diffeomorphisms $\rho$ and $\iota$ of $\Sigma_9$. On the right: $\beta_1, \beta'_1, \beta_5$ and their images under the diffeomorphisms $\phi_k$.} 	
	\label{fig:killingpi1}
\end{figure}

Thus the product monodromy factorization in $ \M(\Sigma_9, s_0)$
\begin{equation} \label{trivialpi1}
t_{\beta_1} t_{\beta'_1} t_{\beta_5}  \, Q_1 \,
t_{\beta_2} t_{\beta_6} t_{\beta_7}  \, Q_2  \,
t_{\beta_3} t_{\beta_4} t_{\beta_8}  \, Q_3 \,
t_{\alpha_1} t_{\alpha_5} t_{\alpha_9}  \, Q_4 \,
t_{\alpha_2} t_{\alpha_6} t_{\alpha_7}  \, Q_5 \,
t_{\alpha_3} t_{\alpha_4} t_{\alpha_8}  \, Q_6 =1  \, ,
\end{equation}
which prescribes a twisted fiber sum of six copies of $(X,f)$, yields a simply-connected symplectic Lefschetz fibration $(Z,h)$ admitting a spin structure coming from $s_0 \in \textrm{Spin}(\Sigma_9)$ with quadratic form $q_0$. Here $Q_1= t_{\beta'_5} P_1$ and $Q_i= Q_1^{\phi_k}$ for $k=2, \ldots, 6$.  
A straightforward Euler characteristic calculation shows that the simply-connected spin, signature zero $4$--manifold $Z$ has the same Euler characteristic as $\#_{127}  (S^2 \times S^2)$. Since both are smoothable simply-connected $4$--manifolds, and have even intersection forms of the same rank, by Freedman, they are homeomorphic.

\medskip
\noindent \underline{\textit{Stable range}}:
Next we will show that we can get symplectic Lefschetz fibrations homeomorphic to $\#_m  (S^2 \times S^2)$ for any odd $m \geq 415$. 
Consider the factorization~\eqref{monodwithquad}, which is $t_{\beta_1} t_{\beta'_1} t_{\beta_5} t_{\beta'_5} \, P_1 = 1$, and its global conjugation by $\rho$, which reads
$(t_{\beta_1} t_{\beta'_1} t_{\beta_5} t_{\beta'_5} \, P_1)^{\rho} = t_{\beta_2} t_{\beta'_2} t_{\beta_6} t_{\beta'_6} \, (P_1)^{\rho} = 1$.
By taking a product of six copies of each, and then applying Hurwitz moves, we obtain
\begin{equation}
(t_{\beta_1} t_{\beta'_1} t_{\beta_6} t_{\beta'_6})^6 
(t_{\beta_2} t_{\beta'_2} t_{\beta_5} t_{\beta'_5})^6
R=1
\end{equation}
where $R$ is the product of the remaining Dehn twists.
This is a monodromy in $\M(\Sigma_9, s_0)$ for a twisted fiber sum of $12$ copies of $(X,f)$ that we will denote by $(Z', h')$.

In this new factorization, the quadruples $\beta_1, \beta'_1, \beta_6, \beta'_6$ and $\beta_2, \beta'_2, \beta_5, \beta'_5$ both bound copies of $\Sigma_2^2$ in $\Sigma_9$. 
We can thus embed two copies of the monodromy factorization~\eqref{eq:MatsumotoLP_IIA_MarkedPoint} (forgetting the marked point) we had for our signature zero spin genus--$2$ pencil, and cancel out the boundary-twists against the Dehn twists along these bounding quadruples. It is time to remember that the spin structure we described on this genus--$2$ pencil admits a quadratic form which evaluates as $1$ on each one of the homology basis elements we described in~\eqref{Genus2SpinStructure}. We can thus embed this relation so that the symplectic pairs are mapped to $\alpha_7, \beta_7, \alpha_8, \beta_8$ and to  $\alpha_3, \beta_3, \alpha_4, \beta_4$, which in turn guarantees that the new monodromy curves are all mapped to $1$ under the quadratic form $q_0$ we had. Repeating this for the other five subfactorizations, we can embed the genus--$2$ pencil monodromy a total number of $12$ times into the product monodromy we had for $(Z',h')$. Each time we breed with the genus--$2$ pencil, we get $4$ more vanishing cycles, and after $12$ breedings, we get  $48$ more.

Now to get a Lefschetz fibration homeomorphic to $\#_m  (S^2 \times S^2)$ for any given odd $m \geq 415$, write $m$ as $m=415+24n+2k$, where $n$ and $k$ are unique non-negative integers for $k < 12$. We first take  $(Z',f')$ and apply $k$ genus--$2$ breedings as described above, and then take the fiber sum of the resulting fibration with $(Z,f)$ and $n$ copies of $(X,f)$, using any gluing that preserves $s_0 \in \textrm{Spin}(\Sigma_9)$. (Untwisted fiber sums prescribed by a product of the monodromy factorizations we gave for these fibrations would do it.) No matter how the fundamental group changes after the genus--$2$ breedings, the extra fiber sum summand $(Z,f)$ ensures that the resulting fibration is simply-connected. Finally, calculating the Euler characteristic, we conclude that the simply-connected spin, signature zero total space of the genus--$9$ Lefschetz fibration we have is homeomorphic to $\#_m  (S^2 \times S^2)$.

This concludes the proof of Corollary~C. \qed

\smallskip
\begin{remark}
It is plausible that, with a more detailed study of the fundamental group of $(X,f)$ and that of twisted fiber sums, one can further improve Corollary~C to get smaller values of $m$. Since we will produce examples of exotic $\#_m (S^2 \times S^2)$ with much smaller topology in Theorem~D, here we content ourselves with examples we could get without getting bogged down with technical details. 
It should be easy to observe that, following a similar construction scheme one can also get minimal symplectic Lefschetz fibrations homeomorphic but not diffeomorphic to $\#_m (S^2 \, \twprod \, S^2)$. This can be achieved essentially with less effort, since we are no longer concerned about matching spin structures, and a single breeding with a non-spin pencil yields an odd intersection form. 
\end{remark}

\begin{remark} \label{KnottedSurfaces}
Involutions on genus--$1$ Lefschetz fibrations were used by Finashin, Kreck and Viro in \cite{FinashinKreckViro} to produce an infinite family of exotically knotted non-orientable surfaces in $S^4$; namely, a family of pairwise non-diffeomorphic surfaces, all ambiently homeomorphic to a standard embedding of $\#_{10} \RP$ with normal Euler number $16$. These were obtained as the fixed point sets of involutions on exotic elliptic surfaces, descending to the quotient, the standard $S^4$. This result was later improved by Finashin in \cite{Finashin}, who produced exotically knotted $\#_{6} \RP$ with normal Euler number $8$, and more recently by Havens in \cite{Havens}, who, by using Finashin's techniques, obtained further irreducible examples. The departing point of the latter works is once again involutions on genus--$1$ Lefschetz fibrations. In stark contrast, no examples of exotically knotted \emph{orientable} surfaces in $S^4$ are known, and there has been several attempts to show that there are no smooth knottings of the standardly embedded ones (that bound handlebodies), which is known as the \emph{Smooth Unknotting Conjecture}. Since any such examples necessarily come from an involution on an exotic $\#_m (S^2 \times S^2)$ with fixed point set homeomorphic to $\Sigma_m$, explicit examples as in Corollary~C, which one may attempt to build the desired involutions on, were sought for quite a while. Similarly, examples of exotic knottings of the standardly embedded non-orientable surface with Euler number $0$ would come from an involution on an exotic $\#_m (S^2 \, \twprod \, S^2)$. With the explicit fibration structure, the examples we built in this section come as prime candidates for this strategy. 
\end{remark}

\enlargethispage{0.15in}
\smallskip
\subsection{Smaller, but not fibered, exotic $\#_m  (S^2 \times S^2)$ }  \

To produce symplectic $4$--manifolds homeomorphic but not diffeomorphic to connected sums of smaller numbers of $S^2 \times S^2$, we will adopt an approach similar to the one employed by the first author and Korkmaz to build an exotic $\CP \# 4 \CPb$ in \cite[\S 5.3]{BaykurKorkmaz}:  First, we will extend our spin, signature zero Lefschetz fibration $(X,f)$ over the $2$--sphere to a  Lefschetz fibration $(X', f')$ over the $2$--torus.  While this will enlarge the fundamental group, now the symplectic $4$--manifold $(X', \omega')$ will have many homologically essential Lagrangian tori carrying the generators of $\pi_1(X')$. We will then apply Luttinger surgeries to these tori in the same fashion as in, for example \cite{FPS, BaldridgeKirk, AkhmedovParkOdd, AkhmedovBaykurPark, ABBKP}, to derive  a symplectic $4$--manifold with trivial fundamental group, which is homeomorphic to $\#_{23}  (S^2 \times S^2)$. We will then show how to produce an infinite family of such examples and how to obtain similar examples in the homeomorphism class of $\#_m  (S^2 \times S^2)$ for every odd $m \geq 23$.

\medskip
\noindent \textit{Proof of Theorem~D}. \,We noted in the previous subsection that the signature zero genus--$9$ Lefschetz fibration $(X,f)$ of Theorem~\ref{KeyLF} has a monodromy factorization of the form 
\mbox{$t_{\beta_1} t_{\beta'_1} t_{\beta_5}  \, Q_1 = 1$} in $\M(\Sigma_9)$, where $\beta_1, \beta'_1, \beta_5$ are as in Figure~\ref{fig:killingpi1}(a), and $Q_1$ is a product of the remaining positive Dehn twists. Given any disjoint, non-separating curves $A_1, A_2, A_3$ that are linearly independent in $H_1(\Sigma_9)$, there is a diffeomorphism of $\Sigma_9$ that takes $(\beta_1, \beta'_1, \beta_5)$ to $(A_1, A_2, A_3)$. Conjugating with this diffeomorphism, we thus get a monodromy factorization  of the form $t_{A_1} t_{A_2} t_{A_3} \, R =1$ in $\M(\Sigma_9, s)$ for $(X,f)$, where $R$ is a product of $45$ positive Dehn twists and $s$ is any spin structure on $\Sigma_9$ yielding a spin structure $\mathfrak{s}$ on $X$.

\smallskip
\noindent \textit{\underline{Symplectic $4$--manifolds homeomorphic to  
$\#_{23}  (S^2 \times S^2)$}}: 
For any $\mathcal{A}, \mathcal{B} \in \M(\Sigma_9)$ that commute with each other, we can extend the Lefschetz fibration $(X,f)$ over the $2$--sphere to a Lefschetz fibration $(X',f')$ over the $2$--torus with a monodromy factorization
\begin{equation} \label{LFoverT2}
[\mathcal{A}, \mathcal{B}] \ t_{A_1} t_{A_2} t_{A_3} \, R =1 \text{ in } \M(\Sigma_9) \, ,
\end{equation}
where the first term is the commutator of $\mathcal{A}$ and $\mathcal{B}$. Moreover, when $\mathcal{A}, \mathcal{B} \in \M(\Sigma_9, s)$, this becomes a factorization in $\M(\Sigma_9, s)$, and can be seen to prescribe a spin Lefschetz fibration over the $2$--torus. For our construction to follow, it will suffice to take $\mathcal{A}= \mathcal{B}=1$. In this case, the extension of the spin structure of $X$ to that of $X'$ can be easily seen by first viewing $X'$ as $X'= (X \setminus \nu(F)) \cup (\Sigma_9 \times \Sigma_1^1)$, where $\nu(F)$ is a fibered 
neighborhood of a regular fiber $F\cong \Sigma_9$ of $(X,f)$. Taking the product of $s \in \textrm{Spin}(\Sigma_9)$ with any $s' \in  \textrm{Spin}(\Sigma_1^1)$ we get a spin structure on $\Sigma_9 \times \Sigma_1^1$ inducing the same spin structure on its boundary as the restriction of the spin structure of $X$ to $\partial \nu(F) \cong \Sigma_9 \times S^1$.  Gluing these spin structures we get a spin structure $\mathfrak{s'}$ on $X'$. \linebreak In particular $X'$ has an even intersection form.

We have $\eu(X')=4(g-1)(h-1) + l = 4 \cdot 8 \cdot 0 +48=48$ (where $g, h$ are the fiber and base genera, $l$ is the number of Lefschetz critical points), and $\sigma(X')=\sigma(X)=0$ since the factorization~\eqref{LFoverT2} of $(X',f')$ is obtained from the monodromy factorization of $(X,f)$ by adding a trivial relation for the trivial commutator. 

We now set up our notation for the Luttinger surgeries and the fundamental group calculation, following the conventions in \cite{FPS, AkhmedovBaykurPark}. Consider the surface $\Sigma_9$ with its standard cell decomposition prescribed by a regular $36$--gon with vertex $x$, and with edges labeled as $\prod_{i=1}^9 a_i b_i a^{-1}_i b^{-1}_i$ as we go around the perimeter. Similarly take $\Sigma_1^1 = T^2 \setminus D$, where $D$ is an open disk, with its standard cell decomposition given by a rectangle with a hole,  with vertex $y$ and with edges labeled as $a b a^{-1} b^{-1}$. So $\{a_i, b_i\}_{i=1}^9$ generate $\pi_1(\Sigma_9)$ and $\{a, b\}$ generate $\pi_1(\Sigma_1^1)$ at the base points $x$ and $y$, respectively. Finally for any $c \in \{a_i, b_i, a, b\}$ let $c', c''$ denote the parallel copies of the curve $c$ on the same surface, as in \cite{FPS}[Figure~2]. By a slight abuse of notation, we will also denote any curve of the form $c \x y$ or $x \x c$ in $\Sigma_9 \x \Sigma_1^1$ by $c$.

Going forward, we take the Dehn twist curves $A_i$ in the factorization~\eqref{LFoverT2} to be isotopic to $a_i$, for $i=1, 2, 3$ and we assume $D$ above is an open disk in the base $T^2$ of the fibration \mbox{$f'\colon X' \to S^2$} containing all the critical values. As we added a trivial commutator, i.e. $\mathcal{A}= \mathcal{B}=1$, we have $\pi_1(X') \cong (\langle a, b \rangle  \times  \pi_1(\Sigma_g)) \, / \, N' \, $, where $N'$ is the subgroup normally generated by the Dehn twist curves in $\pi_1(\Sigma_9)$, together with an extra relation of the form $[a,b]=\mathcal{W}$ for some product of commutators $\mathcal{W} \in [\pi_1(\Sigma_9), \pi_1(\Sigma_9)]$. (The existence of $\mathcal{W}$ is implied by the existence of a pseudosection of $(X,f)$, and we would have $\mathcal{W}=1$ if $(X,f)$ had an honest section.) This can be most easily seen by applying the Seifert-van Kampen theorem to the decomposition $X'= (X \setminus \nu(F)) \cup (\Sigma_9 \times \Sigma_1^1)$. Nonetheless, for our fundamental group calculation below, it will suffice to just know that $N'$ contains relations induced by three Dehn twist curves $a_1, a_2, a_3$. 

The parallel transport of any $\alpha \in \{a'_i, b'_i, b''_i \}$ over any $\gamma \in \{a', b', b'' \}$ is a Lagrangian torus $T$ fibered over $\gamma$. Through the trivialization $X' \setminus (f'^{-1}(D)) \cong \Sigma_9 \times \Sigma_1^1$, we can view $T$ as a Lagrangian $\alpha \times \gamma$ with respect to a product symplectic form on $\Sigma_9 \times \Sigma_1^1$. Note that for $\nu(\alpha), \nu(\gamma)$ the normal neighborhoods of $\alpha, \gamma$ in $\Sigma_9, \Sigma_1^1$, the Weinstein  neighborhood of the Lagrangian torus $\alpha \times \gamma$ is $\nu(\alpha) \times \nu(\gamma)$. Encoding the surgery information by the triple $(T, \lambda, k)$, as in \cite{AkhmedovBaykurPark, FPS}, we claim that performing the following disjoint Luttinger surgeries in $X'$:
\[ (a'_1 \x b', b', 1), \, (b_2'' \x a', a', 1), (a_i' \x b', a_i', 1),  (b_j' \x b'', b_j', 1)  \text{ for }  i=4, \ldots, 9 \,\text{ and }j=1, \ldots, 9 \]
results in a simply-connected symplectic $4$--manifold $X''$. We take $x \times y$ as the base point for the fundamental group calculation. Per the choices we made here, we can invoke the work of Baldridge and Kirk in \cite{BaldridgeKirk} to deduce that $\pi_1(X'')$ has a presentation with generators $a_i, b_i, a, b$, for which the following relations hold (among several others we do not include here): 
\begin{equation*}
 a_1 =a _2 = a_3= 1 \, , 
 \end{equation*}
 \begin{equation*}
 \mu' \,  b = \mu \, a  = \mu_i \, a_i = \mu'_j \,  b_j =1, \text{ for } i=4, \ldots, 9 \text{ and }  j=1, \ldots, 9 \, ,
\end{equation*}
where the first three relations come from the vanishing cycles of our fibration, and   $\mu', \mu, \mu_i, \mu'_j$ in the second line are the meridians of the surgered Lagrangian tori, given by conjugates of commutators of the pairs $\{b_1, a\}$, $\{a_2, b\}$, $\{b_i, a\}$, $\{a_j, a\}$, respectively. (While we can write out the exact commutators following \cite{BaldridgeKirk, FPS}, there will be no need for these details for our calculation to follow.) Since $a_2=1$, the second commutator is trivial, so $a=1$ by the relation $\mu \, a =1$. Since all others are commutators of $a$, they too are trivial, which implies that $b=a_i=b_j=1$ for all $i=4, \ldots, 9$ and $j=1, \ldots 9$, by the the remaining surgery relations above. Now, because $a_1=a_2=a_3=1$, we see that all the generators of $\pi_1(X'')$ are trivial, so $X''$ is simply-connected.

As we obtained $X''$ from $X'$ via Luttinger surgeries, $X''$ admits a symplectic form $\omega''$ \cite{ADK}. Since these surgeries along tori do not change the Euler characteristic or the signature, we have $\eu(X'')=\eu(X')=48$ and $\sigma(X'')=\sigma(X')=0$. Moreover, each surgery is performed along a Lagrangian torus from a  pair of geometrically dual Lagrangian tori, which describe a hyperbolic pair in $H_2(X)$ with respect to the intersection form. These pairs can be seen to be all disjoint. (For example we can take the Lagrangian tori $b''_1 \x a'$, $a'_2 \x b'$, $b''_i \x a'$, $a''_j \x a''$ as the duals.) Since the intersection form $Q_{X'}$ is an extension of that of $Q_{X''}$ by such hyperbolic pairs, it follows that $X''$ is also even, and as $H_1(X'')$ has no \mbox{$2$--torsion,} we conclude that $X''$ is spin. By Freedman's celebrated result, $X''$ is homeomorphic to $\#_{23} \, (S^2 \x S^2)$.  

To obtain an infinite family of examples, first note that the Lagrangian torus $a'_3 \times b'$ and its dual (which we can take as $b''_3 \times a'$) in $X'$ are disjoint from all the other tori (and their dual tori) we surgered. So it descends to a homologically essential Lagrangian torus $T$ in $X''$, for $\omega''$ agrees with $\omega'$ away from the surgery tori \cite{ADK}. As shown by Gompf \cite{Gompf}, we can perturb $\omega''$ so that $T$ becomes a self-intersection zero symplectic torus. Importantly, $\pi_1(X'' \setminus T) = \pi_1(X)=1$, which follows from the fact that the meridian of $T$ in $X'$ was  a conjugate of a commutator of the pair $\{b_3, a\}$ that became trivial in $\pi_1(X'')$. Hence, we can perform Fintushel-Stern knot surgery \cite{FSKnotSurgery} to $X''$ along $T$,  using an infinite family of fibered knots $K_n$ with distinct Alexander polynomials, and produce an infinite family of symplectic $4$--manifolds $(X''_n, \omega''_n)$ which are pairwise non-diffeomorphic but are all homeomorphic to $X''$, and thus,  to $\#_{23} \, (S^2 \x S^2)$.

Lastly, observe that we can run the same construction using spin, signature zero genus $g=8d+1$ Lefschetz fibrations  $(\tilde{X}(d), \tilde{f}(d))$ we built in the proof of Theorem~A. In this case, we will have a similar list of Luttinger surgeries along the Lagrangian tori in the extended fibration $(\tilde{X}(d)', \tilde{f}(d)')$ over the $2$--torus, except now the indices $i$ and $j$ will run up to $8d+1$. This way we see that there are symplectic Lefschetz fibrations over the $2$--torus which are equivalent via Luttinger surgeries to symplectic $4$--manifolds homeomorphic to  $\#_{24d-1} (S^2 \times S^2)$, for any $d \in \Z^+$.

\enlargethispage{0.1in}
\smallskip
\noindent \textit{\underline{Stable range}}: 
As our observation above provides exotic $\#_{m} (S^2 \times S^2)$ only for $m \equiv 23$ (mod 24), we still need to show how to get examples for \emph{every} odd $m \geq 23$.  We will achieve this by taking symplectic fiber sums of $(X'', \omega'')$ above with copies of a small symplectic $4$--manifold we will quickly derive from \cite{FPS} as follows: Take $Y_0=\Sigma_2 \x \Sigma_2$ with a product symplectic form $\omega_0$. As before, let $a_i, b_i$ and $c_j, d_j$ denote the $\pi_1$ generators of the first and second copies of $\Sigma_2$ in $Y_0$, let $x$ and $y$ be the base points we take on them, and for any $c \in \{a_i, b_i, c_j, d_j\}$ let the parallel copies $c', c''$ of $c$ be described in the same fashion. Performing the following Luttinger surgeries in $Y_0$:\footnote{These surgeries are essentially the same as the ones employed in the construction of the homology $S^2 \x S^2$ in \cite{FPS}, except here we do not perform surgeries along the Lagrangian tori $a'_1 \x c'_1$ and $a'_2 \x c'_1$ (as we have different plans for them) and we simply took all the surgery coefficients to be $+1$ (since the effect of surgery coefficient $\pm 1$ on the $\pi_1$ calculation will be simply about killing a generator or its inverse).}
\[ (b'_1 \x c''_1, b_1', 1), \, (a'_2 \x c'_2, a'_2, 1), (b'_2 \x c''_2, b'_2 , 1),  
(a''_2 \x d'_1, d'_1, 1),  (a'_1  \x c'_2 , c'_2, 1), ( a''_1  \x d'_2 , d'_2, 1), \]
we obtain the desired symplectic $4$--manifold $Y$. Clearly $\eu(Y)=\eu(Y_0)=4$ and $\sigma(Y)=\sigma(Y_0)=0$. The following relations hold in $\pi_1(Y)$, based at $x \times y$, between the generators $a_i, b_i, c_j, d_j$:
\begin{equation*}
 \mu_1 \,  b_1 = \mu_2 \, a_2  = \mu_3 \, b_2 = \mu_4  \,  d_1 =\mu_5 \, c_2 = \mu_6 \, d_2 =1,  \, 
\end{equation*}
where $\mu_k$, for $k=1, \ldots, 6$, are the meridians of the surgered Lagrangian tori, given by conjugates of commutators of the pairs $\{a_1, d_1\}$, $\{b_2, d_2\}$, $\{a_2, d_2\}$, $\{b_2, c_1\}$,$\{b_1, d_2\}$,$\{b_1, c_2\}$, respectively. We claim that $a_1$ and $c_1$ normally generate $\pi_1(Y)$. To see this, add extra relations $a_1=c_1=1$. Then, the first and fourth commutators are trivial, so $b_1=d_1=1$ by the relations $\mu_1\, b_1=\mu_4\, d_4 = 1$. But $b_1=1$ implies that fifth and sixth commutators are trivial, and thus $c_2=d_2=1$ by the relations $\mu_5\, c_2 = \mu_6\, d_2=1$. Since now $d_2=1$,  second and  third commutators are trivial, so $a_2=b_2=1$ as well, by the corresponding relations $\mu_2\, a_2 = \mu_3\, b_2 =1$. Hence trivializing $a_1$ and $c_1$ kills all of $\pi_1(Y)$ as claimed.  If we let $T'$ and $T''$ denote the homologically essential Lagrangian tori in $Y$ descending from $a'_1 \x c'_1$ and $a'_2 \x c'_1$ in $Y_0$ (which, along with their geometric duals $b''_1 \x d'_1$ and $b'_2 \x d''_1$, are disjoint from the other surgered tori), their meridians $\mu'$ and $\mu''$ are conjugates of a commutator of $\{b_1, d_1\}$ and of $\{b_2, d_1\}$, respectively. It follows that $\pi_1(Y \setminus (T' \sqcup T''))$ is normally generated by $a_1$ and $c_1$ as well. 

Recall that by perturbing the symplectic form on $X''$  we got a self-intersection zero symplectic torus $T$ in $X''$ with $\pi_1(X'' \setminus T)=1$. Let us continue denoting the perturbed symplectic form on $X''$ as $\omega''$. Similarly, after perturbing the symplectic form, the Lagrangian tori $T'$ and $T''$ (which are homologically essential and independent) become symplectic in $(Y, \omega_Y)$.
We can thus take the symplectic fiber sum of $(X'', \omega'')$ and $(Y, \omega_Y)$ along $T$ and $T'$ to get $(X''_1, \omega''_1)$.

We have $\eu(X''_1)=\eu(X'')+\eu(Y) - 2 \eu(T^2)=48+4=52$ and $\sigma(X''_1)=\sigma(X'')+\sigma(Y)=0+0=0$. Since the image of the generators of $\pi_1(\partial (\nu T'))$ under the boundary inclusion map are $a_1, c_1$ and $\mu$,\linebreak and since $\pi_1(X'' \setminus T)=1$ and $\pi_1(Y \setminus T')$ is normally generated by $a_1$ and $c_1$, by applying the Seifert-van Kampen theorem to the decomposition $X''_1= (X'' \setminus \nu T) \cup (Y \setminus \nu T')$, we conclude that $\pi_1(X''_1)=1$. A quick way to see that $X''_1$ is spin is the following: Reversing the order of Luttinger surgeries and the symplectic fiber sum, which were performed along disjoint subsurfaces, we could obtain $X''_1$ by first taking a symplectic fiber sum of $(X'', \omega'')$ with $(Y_0, \omega_0)$ along $T$ and $a'_1 \x c'_1 $, which we can assume to be spin by taking a spin structure on the product $4$--manifold $Y_0$ whose restriction on the fiber sum region agrees with that of the restriction of the spin structure on $X''$ \cite{Gompf}. In particular we get a $4$--manifold with an even intersection form. But then, when we perform the Luttinger surgeries along the above tori contained in the $Y_0$ factor, the result is $X''_1$ and the intersection form only changes by removing the  hyperbolic pairs corresponding to the surgery tori and their duals. Therefore  $X''_1$ too has an even intersection form, and since $H_1(X''_1)$ has no \mbox{$2$--torsion,} $X''_1$ is spin. Hence $X''_1$ is a simply-connected spin symplectic $4$--manifold, which is homeomorphic to  $\#_{25} (S^2 \times S^2)$ by Freedman. Knot surgery along the other symplectic torus $T''$ that descends from $Y$ to $X''_1$ yields an infinite family of pairwise non-diffeomorphic symplectic $4$--manifolds in the same homeomorphism class. 

Now for  $k>1$, we can build a symplectic $4$--manifold $(X''_k, \omega''_k)$ by taking a symplectic fiber sum of $(X'', \omega'')$ and $k$ copies of $(Y, \omega_Y)$ by first fiber summing $(X'', \omega'')$ and $(Y, \omega_Y)$ along \mbox{$T$ and $T'$} as above,  then ---without performing the knot surgery-- fiber summing the resulting symplectic \mbox{$4$--manifold} $(X''_1, \omega''_1)$ with the next copy of $(Y, \omega_Y)$ along $T''$ (which descends to $X''_1$) and $T'$, and repeating the latter until we add all $k$ copies of $(Y, \omega_Y)$. At the very end of this procedure, we can perform  knot surgery along $T''$ coming from the very last copy of $Y$ to produce infinitely many pairwise non-diffeomorphic symplectic \mbox{$4$--manifolds} homeomorphic to $X''_k$ as before. \linebreak A straightforward calculation as above shows that the symplectic $4$--manifold $X''_k$ has $\pi_1(X''_k)=1$, $\eu(X''_k)= 48 +4k$ and $\sigma(X''_k)=0$. Thus $X''_k$ is homeomorphic to $\#_{23+2k} (S^2 \times S^2)$, for $k \in \Z^+$.
\qed.

\smallskip
\begin{remark}
Unlike the previous works  \cite{Park, APspin1, APspin2}, our construction of exotic $\#_{m}  (S^2 \times S^2)$ does not build on a  compact complex surface produced by algebraic geometers. The spin symplectic \mbox{$4$--manifold} $(X', \omega')$, to which we applied symplectic surgeries, has $b_1(X')=9$, and thus it cannot even be homotopy equivalent to a compact complex surface. Moreover, since there can be only finitely many deformation classes of simply-connected complex surfaces with the same Chern numbers ($c_1^2=2\eu + 3\sigma$ and $c_2=\eu$), all but  finitely many of our exotic symplectic $\#_{m}  (S^2 \times S^2)$ (for fixed $m$) are necessarily non-complex. Performing the knot surgeries we employed in our constructions, using \emph{non-fibered} knots instead with distinct Alexander polynomials, we also get infinitely many exotic  $\#_{m}  (S^2 \times S^2)$ (for fixed $m$) which do not admit symplectic structures \cite{FSKnotSurgery}. 
\end{remark}

\enlargethispage{0.2in}
\smallskip
\appendix
\section{Hurwitz equivalence for the genus--$2$ and genus--$3$ pencils}

Here we address the question of whether the signature zero, spin genus--$2$ and genus--$3$ pencils we constructed in Sections~\ref{SecGenus2} and~\ref{SecGenus3} are new additions to the literature. Many genus--$2$ pencils on ruled surfaces were obtained in \cite{Hamada}, and genus--$3$ pencils on symplectic Calabi-Yau surfaces with $b_1 >0$ in \cite{BaykurGenus3, HamadaHayano}. While our constructions are new, we are able to observe that the monodromy factorizations of the genus--$2$ and genus--$3$ pencils we constructed here are in fact Hurwitz equivalent to the monodromy factorizations of pencils in \cite{Hamada, BaykurGenus3}. Note that the breeding sequence we employed in the construction of our genus--$2$ and genus--$3$ pencils allowed us to carry out the essential calculation for a pseudosection of the key signature zero, spin genus--$9$ Lefschetz fibration we built out of them. However this is irrelevant for the Hurwitz equivalence of  pencil monodromies, and below we will  forget the marked points.

\subsection{The genus--$2$ pencil on $T^2 \times S^2$} \

In~\cite{Hamada}, the second author described several lifts of the monodromy factorization in $\M(\Sigma_2)$ for  Matsumoto's well-known genus--$2$ Lefschetz fibration  \cite{Matsumoto} to monodromy factorizations in $\M(\Sigma_2^4)$ for genus--$2$ pencils on ruled surfaces. Here we will observe that the genus--$2$ pencil we constructed in Section~\ref{SecGenus2} is isomorphic to one of these.

The pencil referred to as $W_{\mathrm{\rm II}A}$ in \cite{Hamada} has the monodromy factorization
\begin{align} \label{eq:MatsumotoLPWIIA}
\DT{B_{0,1}} \DT{B_{1,1}} \DT{B_{2,1}} \DT{C_1} \DT{B_{0,2}} \DT{B_{1,2}} \DT{B_{2,2}} \DT{C_2} = \DT{\delta_1} \DT{\delta_2} \DT{\delta_3} \DT{\delta_4},
\end{align}
where the curves are as shown in Figure~\ref{fig:MatsumotoSections}.
\begin{figure}[htbp]
	\centering
	\includegraphics[height=110pt]{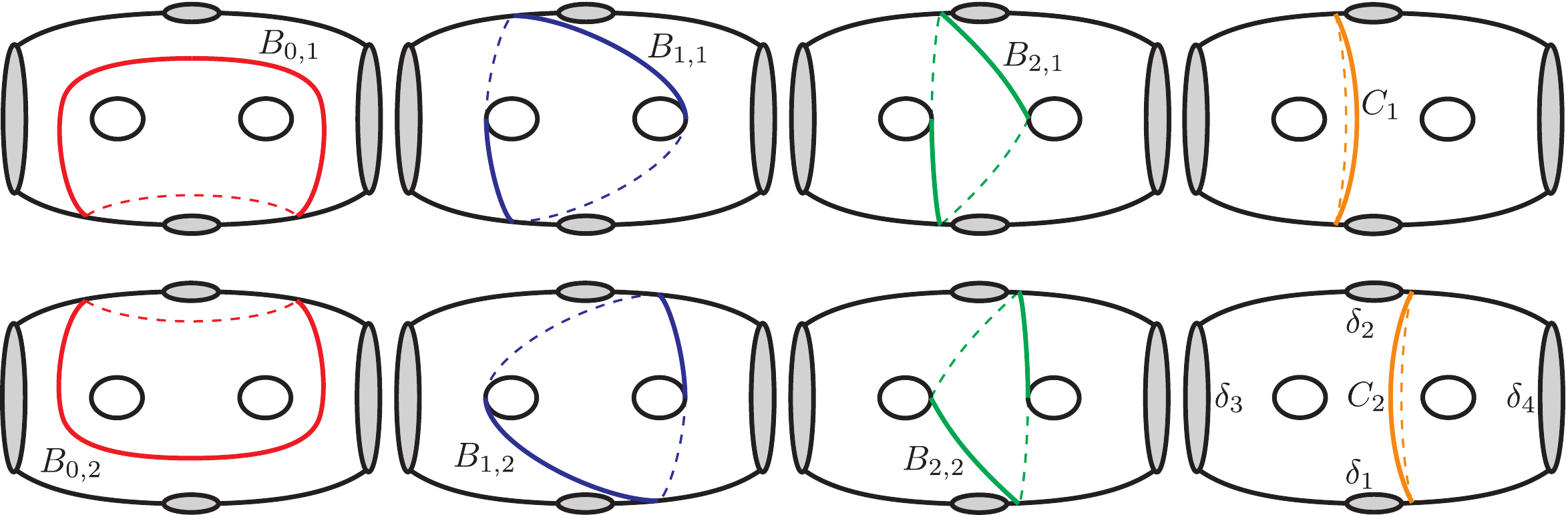}
	\caption{The monodromy curves for the pencil $W_{\mathrm{\rm II}A}$ of \cite{Hamada}.} \label{fig:MatsumotoSections}
\end{figure}

We perform the following Hurwitz moves to this factorization:
\begin{align*}
\DT{\delta_1} \DT{\delta_2} \DT{\delta_3} \DT{\delta_4} &=
\DT{B_{0,1}} \DT{B_{1,1}} \DT{B_{2,1}} \DT{C_1} \underline{\DT{B_{0,2}} \DT{B_{1,2}} \DT{B_{2,2}} \DT{C_2}} \\
&\sim \DT{B_{0,1}} \DT{B_{1,1}} \DT{B_{2,1}} \DT{C_1} \underline{\DT{B_{1,2}} \DT{B_{2,2}} \DT{C_2}} \DT{B_{0,2}^\prime} \\
&\sim \DT{B_{0,1}} \DT{B_{1,1}} \DT{B_{2,1}} \DT{C_1} \underline{\DT{B_{2,2}} \DT{C_2}} \DT{B_{1,2}^\prime} \DT{B_{0,2}^\prime} \\
&\sim \DT{B_{0,1}} \DT{B_{1,1}} \DT{B_{2,1}} \DT{C_1} \DT{C_2} \DT{B_{2,2}^\prime} \DT{B_{1,2}^\prime} \DT{B_{0,2}^\prime},
\intertext{where $B_{0,2}^\prime = \LDT{C_2} \LDT{B_{2,2}} \LDT{B_{1,2}}(B_{0,2})$,
$B_{1,2}^\prime = \LDT{C_2} \LDT{B_{2,2}}(B_{1,2})$,
$B_{2,2}^\prime = \LDT{C_2}(B_{2,2})$, 
and as it turns out that $B_{0,1}$ and $B_{0,2}^\prime$ are disjoint,}
&\sim \DT{B_{0,2}^\prime} \DT{B_{1,1}} \DT{B_{2,1}} \DT{C_1} \DT{C_2} \DT{B_{2,2}^\prime} \DT{B_{1,2}^\prime} \DT{B_{0,1}}.
\end{align*}
To compare with the monodromy factorization of our genus--$2$ pencil, push the boundary components $\delta_1$ and $\delta_2$ as shown in Figure~\ref{fig:Genus2Push_MarkedPoint}.
Then we recognize that the last expression exactly coincides with 
\begin{align*} 
\DT{B_0} \DT{B_1} \DT{B_2} \DT{C} \DT{C^\prime} \DT{B_2^\prime} \DT{B_1^\prime} \DT{B_0^\prime}
\end{align*}
with the curves in Figure~\ref{fig:MatsumotoSectionsIIA_MirrorSymmetrics_MarkedPoint},
which is the monodromy factorization of the genus--$2$ pencil we built in Section~\ref{SecGenus2}, obtained from the factorization~\eqref{eq:MatsumotoLP_IIA_MarkedPoint} by forgetting the marked point. 
(In fact, we can also show that the $6$--holed torus relation~\eqref{eq:6holed_torus_relation} we obtained while building our genus--$2$ pencil is Hurwitz equivalent to the $6$--holed torus relation of Korkmaz-Ozbagci~\cite{KorkmazOzbagci}.)

\enlargethispage{0.2in}
\subsection{The genus--$3$ pencil on $T^4$}  \

In \cite{BaykurGenus3}, the first author constructed symplectic genus--$3$ Lefschetz pencils in every rational homology type of symplectic Calabi-Yau surfaces with $b_1>0$. We will show that our signature zero, spin genus--$3$ pencil with monodromy factorization~\eqref{eq:Genus3LP_4BoundingPairs}
\begin{align} \label{eq:Genus3LP_4BoundingPairs_closed}
\DT{a} \DT{a^\prime} \DT{x} \DT{b} \DT{b^\prime} \DT{y} \DT{c} \DT{c^\prime} \DT{z} \DT{d} \DT{d^\prime} \DT{w} 
= \DT{\delta_1} \DT{\delta_2} \DT{\delta_3} \DT{\delta_4}
\end{align}
in $\M(\Sigma_3^4)$ (which we quote here without the marked point) is Hurwitz equivalent to the genus--$3$ pencil on a symplectic Calabi-Yau $4$--torus in~\cite{BaykurGenus3}.
After applying a cyclic permutation,
\begin{align*}
\DT{\delta_1} \DT{\delta_2} \DT{\delta_3} \DT{\delta_4} 
&=
\underline{\DT{d^\prime} \DT{w} \DT{a} \DT{a^\prime} \DT{x} \DT{b}} \cdot \underline{\DT{b^\prime} \DT{y} \DT{c} \DT{c^\prime} \DT{z} \DT{d}} \\
&\sim 
\underline{\DT{w} \DT{a} \DT{a^\prime} \DT{x} \DT{b}} \DT{A_2} \cdot 
\underline{\DT{y} \DT{c} \DT{c^\prime} \DT{z} \DT{d}} \DT{A_2^\prime}
\intertext{where $A_2 = \LDT{b} \LDT{x} \LDT{a^\prime} \LDT{a} \LDT{w}(d^\prime)$, $A_2^\prime = \LDT{d} \LDT{z} \LDT{c^\prime} \LDT{c} \LDT{y}(b^\prime)$,}
&\sim 
\DT{a} \underline{\DT{a^\prime} \DT{x} \DT{b}} \DT{A_1} \DT{A_2} \cdot 
\DT{c} \underline{\DT{c^\prime} \DT{z} \DT{d}} \DT{A_1^\prime} \DT{A_2^\prime}
\intertext{where $A_1 = \LDT{b} \LDT{x} \LDT{a^\prime} \LDT{a}(w)$, $A_1^\prime = \LDT{d} \LDT{z} \LDT{c^\prime} \LDT{c}(y)$,}
&\sim 
\DT{a} \DT{x} \DT{b} \DT{A_0} \DT{A_1} \DT{A_2} \cdot 
\DT{c} \DT{z} \DT{d} \DT{A_0^\prime} \DT{A_1^\prime} \DT{A_2^\prime}
\end{align*}
where $A_0 = \LDT{b} \LDT{x}(a^\prime)$, $A_0^\prime = \LDT{d} \LDT{z}(c^\prime)$.
By relabeling $B_0=a$, $B_1=x$, $B_2=b$, $B_0^\prime=c$, $B_1^\prime=z$, $B_2^\prime=d$, we obtain
\begin{align} \label{eq:genus3LPbyInanc}
\DT{B_0} \DT{B_1} \DT{B_2} \DT{A_0} \DT{A_1} \DT{A_2} \cdot 
\DT{B_0^\prime} \DT{B_1^\prime} \DT{B_2^\prime} \DT{A_0^\prime} \DT{A_1^\prime} \DT{A_2^\prime}
=
\DT{\delta_1} \DT{\delta_2} \DT{\delta_3} \DT{\delta_4} 
\end{align}
which, after suitably sliding the boundary components, coincides with the positive factorization $W=\DT{\delta_1} \DT{\delta_2} \DT{\delta_3} \DT{\delta_4}$ in~\cite{BaykurGenus3}. As shown in \cite{HamadaHayano}, by further Hurwitz moves and a global conjugation, one can see that the monodromy factorization of the latter pencil is equivalent to that of the holomorphic Lefschetz pencil on the standard $T^4$ by Smith \cite{SmithT4}. This array of arguments shows that the symplectic Calabi-Yau surface $Y$, which is the total space of our genus--$3$ pencil in Section~\ref{SecGenus3}, is in fact diffeomorphic to $T^4$.

\medskip

\end{document}